\tikzset{none/.style={thick}}
\numberwithin{equation}{section}
\theoremstyle{plain}
\newtheorem{theorem}[equation]{Theorem}
\newtheorem{lemma}[equation]{Lemma}
\newtheorem{corollary}[equation]{Corollary}
\newtheorem{proposition}[equation]{Proposition}
\newtheorem{prop}[equation]{Proposition}
\theoremstyle{definition}
\newtheorem{definition}[equation]{Definition}
\newtheorem{notation}[equation]{Notation}
\newtheorem{remark}[equation]{Remark}
\newtheorem{example}[equation]{Example}
\newcommand{\isom}{\cong}                       % 'is isomorphic to'
\newcommand{\homeq}{\simeq}                     % 'is homotopy equivalent to'
\newcommand{\smsh}{\wedge}                      % smash product
\newcommand{\Smsh}{\bigwedge}                   % indexed smash product
\newcommand{\R}{\mathbb{R}}                     % real numbers
\DeclareMathOperator*{\hocolim}{hocolim}
\DeclareMathOperator*{\holim}{holim}
\newcommand{\Map}{\operatorname{Map} }
\newcommand{\un}[1]{\underline{#1}}
\newcommand{\based}{\mathsf{Top}_*}
\newcommand{\spaces}{\mathsf{Top}}
\newcommand{\spectra}{{\mathsf{Sp}}}              % spectra
\newcommand{\weq}{\; \tilde{\longrightarrow} \;}      % weak equivalence
\newcommand{\lweq}{\; \tilde{\longleftarrow} \;}      % left weak equivalence
\newcommand{\into}{\hookrightarrow}
\newcommand{\dual}{\mathbb{D}}                  % Spanier-Whitehead dual (blackboard bold D)
\begin{document}

\title{Koszul duality for topological $E_n$-operads}

\author{Michael Ching}
\address{Department of Mathematics and Statistics,
Amherst College,
Amherst, MA 01002, USA}
\email{mching@amherst.edu}

\author{Paolo Salvatore}
\address{Dipartimento di Matematica,
Universit\`{a} di Roma Tor Vergata,
Via della Ricerca Scientifica,
00133 Roma, Italy}
\email{salvator@mat.uniroma2.it}

\thanks{The first author was partially supported by the National Science Foundation through grant DMS-1709032. The second author acknowledges the 
MIUR Excellence Department Project awarded to the Department of Mathematics, University of Rome Tor Vergata, CUP E83C18000100006. The authors would also like to thank the Isaac Newton Institute for Mathematical Sciences, Cambridge, for support and hospitality during the programme \emph{Homotopy harnessing higher structures (HHH)} where much of this paper was written. That programme was supported by EPSRC grant number EP/R014604/1.}

\begin{abstract}
We show that the Koszul dual of an $E_n$-operad in spectra is $O(n)$-equivariantly equivalent to its $n$-fold desuspension. To this purpose we introduce a new $O(n)$-operad of Euclidean spaces $R_n$, the barycentric operad, that is fibred over simplexes and has homeomorphisms as structure maps; we also introduce its sub-operad of restricted little $n$-discs $D_n$, that is an $E_n$-operad. The duality is realized by an unstable explicit S-duality pairing $(F_n)_+ \wedge BD_n \to \bar{S}_n$, where $B$ is the bar-cooperad construction, $F_n$ is the Fulton-MacPherson $E_n$-operad, and the dualizing object $\bar{S}_n$ is an operad of spheres that are one-point compactifications of star-shaped neighbourhoods in $R_n$. We also identify the Koszul dual of the operad inclusion map $E_n \to E_{n+m}$ as the $(n+m)$-fold desuspension of an unstable operad map $E_{n+m} \to \Sigma^m E_n$ defined by May.
\end{abstract}

\maketitle

\section{Introduction}

As stated in the abstract the main result of this paper is the following theorem.

\begin{theorem} \label{thm:1}
Let $\mathbf{E}_n$ denote the stable (reduced) little $n$-discs operad, i.e.\ the operad of spectra formed by taking suspension spectra of the ordinary little discs operad of topological spaces. Then there is an $O(n)$-equivariant equivalence of operads of spectra
\[ K\mathbf{E}_n \homeq \Sigma^{-n}\mathbf{E}_n \]
between the Koszul dual of $\mathbf{E}_n$ and its $n$-fold operadic desuspension.
\end{theorem}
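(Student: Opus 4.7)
The Koszul dual of a reduced operad of spectra $\mathcal{O}$ can be described by the bar--dual formula $K\mathcal{O} \homeq \dual B\mathcal{O}$, where $B$ is the operadic bar construction (producing a cooperad) and $\dual$ is the levelwise Spanier--Whitehead dualization that turns a cooperad back into an operad. Theorem~\ref{thm:1} therefore reduces to producing an $O(n)$-equivariant pairing between $\mathbf{E}_n$ and $B\mathbf{E}_n$, landing in an appropriate operad of $n$-spheres, that is non-degenerate in each arity and compatible with the operad structure on the first factor, the cooperad structure on the second, and the operad structure on the target. The plan is to realize such a pairing \emph{unstably}, i.e.\ at the space level, and only then apply suspension spectra.

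The first step is to replace $\mathbf{E}_n$ by two geometrically convenient but equivalent models, one on each side of the pairing. On one side I would take the (suspension spectrum of the) Fulton--MacPherson compactification $F_n$, a cofibrant $E_n$-operad built from compact manifolds with corners. On the other I would use $BD_n$, where $D_n$ is the restricted little $n$-discs sub-operad of the barycentric operad $R_n$ introduced in the paper; since $D_n$ is an $E_n$-operad, $BD_n$ is equivalent to $B\mathbf{E}_n$. Both models are manifestly $O(n)$-equivariant, and both sit inside the ambient object $R_n$, which is what will make the operadic compatibility of the pairing manageable.

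The heart of the proof is the construction and analysis of the explicit unstable pairing
\[ (F_n)_+ \wedge BD_n \to \bar{S}_n, \]
where the dualizing operad $\bar{S}_n$ consists of one-point compactifications of star-shaped neighborhoods in $\R^n$. Geometrically, a point of $BD_n(k)$ is a rooted tree decorated by configurations of restricted little discs, and a point of $F_n(k)$ is a (possibly degenerate) configuration of $k$ labelled points in $\R^n$; the pairing evaluates the configuration through the tree of discs and returns the resulting point if it falls in the prescribed star-shaped region, or the basepoint otherwise. Because $F_n$, $D_n$, and $\bar{S}_n$ are all constructed inside (or in terms of) the common object $R_n$, both the operadic and the cooperadic compatibilities should be visible from the geometry.

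The principal obstacle is verifying, simultaneously and $O(n)$-equivariantly, that this map is a Spanier--Whitehead duality in each arity and that it is compatible with all the relevant (co)operad structures. Arity-wise non-degeneracy should follow from a parametrized deformation argument using the star-shapedness of each $\bar{S}_n(k)$, yielding an adjoint equivalence $(F_n(k))_+ \weq \Map(BD_n(k), \bar{S}_n(k))$ after stabilization. Operadic compatibility, particularly on the cooperad side coming from the bar construction, is subtle and relies on the coordinated choice of the three models, which is precisely what their common construction inside $R_n$ provides. Finally, contractibility of the space of star-shaped neighborhoods lets one identify $\bar{S}_n$ operadically with the $n$-sphere operad, yielding the equivalence $K\mathbf{E}_n \homeq \Sigma^{-n}\mathbf{E}_n$ as claimed.
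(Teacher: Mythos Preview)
Your proposal is correct and follows essentially the same route as the paper: build the explicit unstable pairing $(F_n)_+ \smsh BD_n \to \bar{S}_n$ using the two $E_n$-models $F_n$ and $D_n$ living inside the barycentric operad $R_n$, verify operadic/cooperadic compatibility geometrically, and deduce aritywise non-degeneracy from a Dold--Puppe S-duality argument exploiting the star-shapedness of $\mathring{S}_n$. The one substantive technical step you elide is that the paper does not check the adjoint equivalence $\Sigma^\infty_+ F_n(k) \weq \Map(BD_n(k), \Sigma^\infty\bar{S}_n(k))$ directly; instead it passes through the bar--cobar Quillen equivalence $(\mathbb{B},\mathbb{C})$ between operads and quasi-cooperads, reduces to showing the adjoint map $\Sigma^\infty \mathbb{B}F_n \to \Map(D_n,\Sigma^\infty \bar{S}_n)$ is an equivalence on corollas, and there identifies $\mathbb{B}F_n(I)$ with the one-point compactification of the configuration space so that Dold--Puppe applies cleanly.
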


By an \emph{equivalence of operads} we mean a zigzag of operad morphisms (or often a single morphism), each of which is arity-wise a weak equivalence in some underlying category. In Theorem~\ref{thm:1} that underlying category consists of spectra formed from topological spaces with $O(n)$-action, and the weak equivalences are those $O(n)$-equivariant maps which are stable weak equivalences when forgetting the $O(n)$-actions, i.e.\ we are working in \emph{naive} $O(n)$-equivariant stable homotopy theory. At several other points in this paper, we employ equivalences of operads in the category of pointed spaces, where the underlying weak equivalences are the weak homotopy equivalences.

The purpose of this introduction is to review the main objects involved in Theorem~\ref{thm:1}, describe its significance, and summarize our approach to the theorem. We start by recalling the little $n$-discs operad.

\subsection*{Topological \texorpdfstring{$E_n$}{En}-operads}

The little $n$-discs/cubes topological operads were introduced by Boardman and Vogt in the 1970s in order to parametrize the natural operations on $n$-fold loop spaces coming from configuration spaces in $\R^n$, together with their compositions. Operations with $k$ inputs are parametrized by families of $k$ discs in the unit disc with disjoint interiors, and composition operations correspond to rescaling and gluing families of discs. May coined the word `operad' to describe this algebraic structure of composable operations with many inputs and one output. Since then operads in general symmetric
monoidal categories have been extensively studied. The objects acted upon by an operad $P$ are called $P$-algebras, and any (suitably cofibrant) operad equivalent to the little $n$-discs is called an `$E_n$-operad'. More generally $E_n$-operads and $E_n$-algebras make sense in any symmetric monoidal $\infty$-category. For example, an $E_1$-algebra is an object with a binary operation that is associative up to higher coherent homotopies. For $n > 1$, the operation in an $E_n$-algebra also possesses some degree of commutativity. The prototypical examples of topological $E_n$-algebras are $n$-fold iterated loop spaces.

\subsection*{Koszul duality for operads}

The idea of Koszul duality also originated in the 1970s, and is due to Priddy~\cite{priddy:1970}: to an algebra $A$ generated by quadratic operations and relations, once can contravariantly assign a `dual' $DA$. For certain algebras, designated `Koszul', there is an induced contravariant equivalence between the derived categories of $A$-modules and $DA$-modules satisfying suitable conditions. A version of Koszul duality for operads was first introduced by Ginzburg and Kapranov~\cite{ginzburg/kapranov:1994} in the context of operads of chain complexes. They constructed a contravariant functor
\[ \mathbf{D}: \mathsf{Op}(\mathsf{Ch}_\mathsf{k})^{op} \to \mathsf{Op}(\mathsf{Ch}_\mathsf{k}) \]
from the category of operads of chain complexes of vector spaces (over a field $\mathsf{k}$ of characteristic zero) to itself such that, subject to finiteness conditions, $\mathbf{D}(\mathbf{D}(P)) \homeq P$ for each such operad $P$.
They also constructed a contravariant functor from $P$-algebras to $\mathbf{D}(P)$-algebras. A fundamental example is $P = E_1$ in which case $\mathbf{D}(E_1)$ is equivalent to $E_1$ up to shift in dimension, and the associated functor on associative algebras coincides with the (dual of) the classical bar-cobar duality between associative algebras and coassociative coalgebras, for example as described by Moore~\cite{moore:1971}.

Another fundamental example arises for $P=Com$, the commutative operad, for which we have $\mathbf{D}(Com) \homeq Lie$, the Lie operad. The induced functor between algebras over these operads appears in Quillen's key work on rational homotopy theory~\cite{quillen:1969} giving two different algebraic models for simply-connected rational homotopy types: one based on differential graded Lie-algebras, and one on commutative differential graded coalgebras.

Ginzburg and Kapranov also introduced the notion of a \emph{Koszul} operad $P$: one for which the operad $\mathbf{D}(P)$ admits a particularly nice model, called the \emph{Koszul dual} of $P$. For example, the Lie and commutative operads are Koszul duals of each other.

Getzler and Jones~\cite{getzler/jones:1994} reworked some of the Ginzburg-Kapranov constructions in terms of an equivalence between operads and (connected) cooperads, still for chain complexes over a field $\mathsf{k}$ of characteristic zero. They also extended the self-duality for the associative operad to the homology of the $E_n$-operads for $n > 1$. That is, they constructed an equivalence
\begin{equation} \label{eq:en} \mathbf{D}(e_n) \homeq s^{-n}e_n \end{equation}
where $e_n = H_*(E_n,\mathsf{k})$ is the operad of graded vector spaces formed by the homology (with coefficients in $\mathsf{k}$) of the little $n$-disc operad, and $s^{-n}$ denotes the $n$-fold operadic desuspension given by suitable shifts in dimension (and sign changes). As with the previous examples, the Getzler and Jones duality induces bar and cobar constructions between $e_n$-algebras and $e_n$-coalgebras, known also as Poisson $n$-(co)algebras when $n > 1$, and Gerstenhaber algebras for
$n=2$.

It is the equivalence (\ref{eq:en}) that we generalize in this paper to an equivalence of underlying topological operads. Benoit Fresse made significant steps in this direction by proving in \cite{fresse:2011} a version of (\ref{eq:en}) with the (characteristic zero) homology operad $e_n$ replaced by an integral chain model $C_*(E_n)$ for the little disc operad, that is an $E_n$-operad in chain complexes.

\begin{theorem}[Fresse~\cite{fresse:2011}] \label{thm:fresse}
There is an equivalence of operads of chain complexes (of abelian groups)
\[ \mathbf{D}(C_*(E_n)) \homeq s^{-n}C_*(E_n) \]
where $\mathbf{D}$ is the extension of Ginzburg-Kapranov's dg-dual to integral chain complexes, as described in \cite{fresse:2004}.
\end{theorem}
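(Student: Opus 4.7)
The plan is to build an explicit integral duality pairing modeled on the geometry of Fulton--MacPherson compactifications, mirroring at the chain level the unstable $S$-duality pairing constructed later in this paper. Let $F_n$ denote the Fulton--MacPherson $E_n$-operad, in which the space of $k$-ary operations $F_n(k)$ is a compact manifold with corners of dimension $n(k-1)-1$, carrying a natural $O(n)$-action and a stratification indexed by rooted trees with $k$ leaves. Take $C_*(E_n) := C_*(F_n)$ as the chain model.

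The first step is to identify the bar construction $B(C_*(F_n))$ with the relative chain complex of $F_n$ modulo its boundary. The bar construction on a chain operad decomposes as a sum over rooted trees with differential given by edge contractions; this combinatorics matches the boundary stratification of $F_n(k)$ exactly, yielding a natural quasi-isomorphism
\[ B(C_*(F_n))(k) \homeq C_*(F_n(k), \partial F_n(k)). \]
Poincar\'e--Lefschetz duality for the manifold-with-corners $F_n(k)$ then produces an equivalence with a shifted cochain complex of $F_n(k)$, and the shift $n(k-1)-1$ is exactly what the $n$-fold operadic desuspension $s^{-n}$ prescribes. To conclude that $\mathbf{D}(C_*(E_n)) \homeq s^{-n} C_*(E_n)$, one then checks that this composite pairing respects operadic structure, so that cooperad co-composition on the bar side corresponds to operad composition on the dual side.

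The principal obstacle is establishing this operadic compatibility integrally. One must show that the Poincar\'e--Lefschetz pairing intertwines the configuration-gluing maps defining operad composition in $C_*(F_n)$ with the face maps of the cellular bar construction, up to the signs packaged into $s^{-n}$; this requires careful control of orientations on the boundary strata of $F_n(k)$ and their compatibility with both the symmetric-group and $O(n)$ actions. Working over $\Z$ precludes any appeal to rational formality or Hodge-theoretic input, so each quasi-isomorphism must be realized by an explicit cellular map rather than extracted from a spectral sequence whose pages only agree after inverting torsion. In \cite{fresse:2011, fresse:2004}, Fresse navigates this by constructing a Koszul-type cofibrant replacement of $C_*(E_n)$ whose bar construction can be computed term by term, and then comparing the resulting dg-dual with $s^{-n}C_*(E_n)$ via an explicit zig-zag of integral operad maps; a geometric proof along the lines sketched above would instead proceed by producing compatible dual cell decompositions of $F_n$ and verifying the pairing on chain representatives.
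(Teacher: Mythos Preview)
The paper does not prove this theorem: it is stated with attribution to Fresse and cited to \cite{fresse:2011}, serving as background and motivation for the paper's own main result (Theorem~\ref{thm:1}). There is therefore no proof in the paper to compare your proposal against.

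That said, it is worth noting that your sketch is essentially the strategy the present paper carries out for the \emph{spectral} analogue, Theorem~\ref{thm:1}, rather than for Fresse's chain-level statement. The identification of the bar construction with something built from $F_n(k)$ modulo its boundary, the use of a duality pairing landing in an operad of spheres, and the hard work of checking operadic compatibility of that pairing---these are exactly the ingredients of Sections~\ref{sec:map}--\ref{sec:proof} here, where the Dold--Puppe $S$-duality theorem plays the role you assign to Poincar\'e--Lefschetz duality. Your description of Fresse's actual method (a Koszul-type cofibrant replacement and an explicit zig-zag) is accurate in spirit, but if you want a proof of Theorem~\ref{thm:fresse} itself you should consult \cite{fresse:2011} directly; the present paper neither reproduces nor reproves it.
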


This equivalence determines a contravariant endofunctor on the category of $E_n$-algebras in chain complexes that relates the $E_n$-algebras $C_*(\Omega^n X)$ and $C^*(X)$, for an $n$-connected space $X$ of finite type, see \cite{fresse:2010}, extending the Adams-Hilton \cite{adams/hilton:1956} classical duality for $n=1$.

All of the above discussion has been about algebraic operads, but this paper concerns operads and duality in a topological setting. The authors, independently in their Ph.D.~theses \cite{salvatore:1999} and \cite{ching:2005}, described a cooperad structure on the bar construction $BP$ of a reduced topological operad $P$. The Spanier-Whitehead dual of $BP$ is an operad $KP$ of spectra which has come to be known as the (derived) Koszul dual of $P$, despite more accurately being the analogue of the dg-dual $\mathbf{D}(P)$.

The first author showed in \cite{ching:2012} that, subject to finiteness conditions, the Koszul duality construction $K$ is self-adjoint, i.e.\ for a reduced operad $P$ of spectra, there is an equivalence of operads
\[ K(K(P)) \homeq P. \]
The close analogy between the derived Koszul dual $K$ and the dg-dual $\mathbf{D}$ led to the conjecture, made initially by the authors in 2005 and stated explicitly in \cite{ching:2012}, that the little disc operad $E_n$ (or rather its associated operad of spectra) satisfies an equivalence similar to that of Theorem~\ref{thm:fresse}. In this paper we prove that indeed this is the case: that is the content of Theorem~\ref{thm:1}.

We stress that the duality is unstable in nature, since it originates from an S-duality pairing of the form
\[ (E_n)_+ \wedge BE_n \to \bar{S}_n \]
where $\bar{S}_n$ is a certain operad of spheres, see Section~\ref{sec:susp} for details. This fits well with the work by Ayala and Francis on factorization homology \cite[\S3.2]{ayala/francis:2021}, where the authors construct the expected induced functor between unstable $E_n$-algebras and $E_n$-coalgebras, and remark that no explicit duality has been constructed on the operad level yet.

Since the conjecture on self duality of the $E_n$-operad was made, work of Lurie has put bar-cobar dualities in homotopy theory into a wider framework. In~\cite[\S5.2]{lurie:2017}, Lurie describes bar and cobar constructions between the monoids and comonoids in any monoidal $\infty$-category. Applying this work to the composition product of symmetric sequences yields another version of bar-cobar duality for operads (which can be viewed as monoids for the composition product).

Moreover, iterating the construction for monoids (or $E_1$-algebras), Lurie obtains, in \cite[5.2.5]{lurie:2017}, bar and cobar constructions between topological $E_n$-algebras and coalgebras (in an arbitrary $E_n$-monoidal $\infty$-category). We expect the result of this paper to be closely related to those constructions.

\subsection*{Suspension and desuspension for topological operads}

The third component to the statement of Theorem~\ref{thm:1} is the desuspension $\Sigma^{-n}$ of an operad of spectra. For operads of chain complexes, the notion of desuspension is very simple to describe. Given an operad $P$, we define a new operad $s^{-1}P$ by $s^{-1}P(k)_r := P(k)_{r+(k-1)}$, i.e.\ a shift in degree by $k-1$, with $\Sigma_k$-action twisted by the sign representation. The operad composition maps for $s^{-1}P$ are just shifted versions of those of $P$.

The key property of the operad $s^{-1}P$ is that an $s^{-1}P$-algebra can be identified with a $P$-algebra shifted up in degree by $1$. It is this fact that we use to define desuspension of operads of spectra.

Thus, for an operad $\mathbf{P}$ of spectra, we require that its \emph{desuspension}, denoted $\Sigma^{-1}P$, be an operad of spectra for which the ordinary suspension functor for spectra provides an equivalence between the ($\infty$-)categories of $P$-algebras and $\Sigma^{-1}P$-algebras.

Arone and Kankaanrinta provide an explicit construction of such a desuspension in~\cite{arone/kankaanrinta:2014}. There they describe a cooperad $S_\infty$ of pointed spaces with the property that $S_\infty(k)$ is homeomorphic to the sphere $S^{k-1}$, and for which all composition maps are homeomorphisms. (Thus $S_\infty$ is also an operad.) The desuspension of an operad $\mathbf{P}$ of spectra can then be defined via mapping spectra as
\[ \Sigma^{-1}\mathbf{P} := \Map(S_\infty,\mathbf{P}). \]
In the statement of Theorem~\ref{thm:1}, we require an $n$-fold operadic desuspension. In order to obtain a fully $O(n)$-equivariant equivalence, we introduce a coordinate free version of the constructions of \cite{arone/kankaanrinta:2014}, that is a cooperad $S_n$ of pointed spaces that admits an $O(n)$-action and is equivalent to a termwise smash product of $n$ copies of $S_\infty$. (Our notation unfortunately clashes with that of Arone and Kankaanrinta who write $S_n$ for another (co)operad that is equivalent to $S_\infty$.)

\subsection*{The Koszul dual of the inclusion \texorpdfstring{$E_{n} \to E_{m+n}$}{En to E(m+n)}}

Our approach to Theorem~\ref{thm:1} also allows us to identify the Koszul dual of the operad map $E_n \to E_{n+m}$ induced by the standard inclusion $\R^n \to \R^{n+m}$. In section ~\ref{sec:compatibility} we prove the following result.

\begin{theorem} \label{thm:2}
There is a homotopy-commutative diagram of operads
\[ \begin{diagram} \dgARROWLENGTH=1em
  \node{K\mathbf{E}_{n+m}} \arrow{s} \arrow{e,t}{\sim} \node{\Sigma^{-(n+m)}\mathbf{E}_{n+m}} \arrow{s} \\
  \node{K\mathbf{E}_n} \arrow{e,t}{\sim} \node{\Sigma^{-n}\mathbf{E}_n}
\end{diagram} \]
where the left-hand map is Koszul dual to the inclusion $\mathbf{E}_n \to \mathbf{E}_{n+m}$, the horizontal maps are the equivalences of Theorem~\ref{thm:1}, and the right-hand vertical map is the $(n+m)$-fold desuspension of a certain unstable operad map
\[ E_{n+m} \to \Sigma^{m}E_n \]
constructed by May in~\cite{may:1972} and also studied by Ahearn and Kuhn in \cite[\S7]{ahearn/kuhn:2002}.
\end{theorem}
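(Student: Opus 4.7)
The plan is to trace the compatibility of the S-duality pairing $(F_n)_+ \wedge BD_n \to \bar{S}_n$ that realizes Theorem~\ref{thm:1} with the standard inclusion $\R^n \hookrightarrow \R^{n+m}$.

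First I would construct a comparison square of pairings relating the $n$-level and $(n+m)$-level constructions. The inclusion induces compatible operad maps $F_n \to F_{n+m}$ on Fulton-MacPherson operads and $D_n \to D_{n+m}$ on restricted little discs, hence a cooperad map $BD_n \to BD_{n+m}$. On the target, the orthogonal projection $\R^{n+m} \to \R^n$ produces a map $\bar{S}_{n+m} \to \Sigma^m \bar{S}_n$ between the dualizing sphere operads which records the extra $m$ coordinates as suspension directions while keeping the $\R^n$-data as a star-shaped-neighborhood compactification in $\bar{S}_n$.

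Next I would verify that the resulting square of pairings commutes up to homotopy. This is a direct geometric check at the unstable level: an element of $BD_n$, viewed inside $BD_{n+m}$, pairs with an $(n+m)$-disc configuration from $F_{n+m}$ in a way that, after the collapse $\bar{S}_{n+m} \to \Sigma^m \bar{S}_n$, factors through pairing with the $\R^n$-projected configuration. Adjoining this commutative square and invoking the identifications $K\mathbf{E}_n \simeq \text{Map}((F_n)_+,\bar{S}_n)$ and $\Sigma^{-n}\mathbf{E}_n \simeq \text{Map}(S_n,\mathbf{E}_n)$ built in the proof of Theorem~\ref{thm:1}, one obtains the desired homotopy-commutative diagram whose horizontal maps are the Theorem~\ref{thm:1} equivalences, whose left vertical map is $K$ applied to $\mathbf{E}_n \to \mathbf{E}_{n+m}$, and whose right vertical map is $\Sigma^{-(n+m)}$ applied to some unstable operad map $\phi_{n,m}: E_{n+m} \to \Sigma^m E_n$.

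The main obstacle is identifying $\phi_{n,m}$ with May's map as studied in \cite[\S7]{ahearn/kuhn:2002}. By construction, $\phi_{n,m}$ sends a little $(n+m)$-disc configuration to its $\R^n$-projection in $E_n$ together with the $m$-fold suspension coordinate recording the $\R^m$-components, which is precisely the geometric prescription used by May. The delicate point is the behaviour on little discs whose $\R^n$-projections overlap: the restricted little discs operad $D_n$ and the Fulton-MacPherson model $F_n$ both accommodate such degenerations (the former by definition, the latter through its compactification), and one must argue that the continuous extension produced by our pairing matches the one provided by May's collapsing construction. I expect this identification, made explicit by writing out corresponding formulas at each operadic level, to occupy the bulk of Section~\ref{sec:compatibility}.
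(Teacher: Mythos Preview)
Your proposal has the roles of the two $E_n$-models reversed, and this causes the argument to go astray. In the paper, the duality pairing is adjointed to $\alpha^\#: \Sigma^\infty_+ F_V \to \Map(BD_V,\Sigma^\infty\bar{S}_V)$, so the Koszul dual side is modelled by $\Map(BD_V,\bar{S}_V)$, not by $\Map((F_V)_+,\bar{S}_V)$ as you write. More importantly, the map you need on the $D$-side is not the inclusion $D_n \to D_{n+m}$: May's map goes the other way, $D_{n+m} \to D_n \wedge_\Delta \bar{S}_m$, sending a little $(n{+}m)$-disc configuration $((x,y),t)$ to $((x,t),(y,t))$. The paper constructs this map $\kappa$ explicitly (checking that when the $\R^m$-component $(y,t)$ lies in $\mathring{S}_W$ the $\R^n$-component $(x,t)$ is a genuine little-disc configuration), and then the compatibility diagram (Theorem~\ref{thm:pro}) is a one-line formula check: the proof literally reads ``a simple diagram chase''. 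May's map is an \emph{input} to the construction, not something to be extracted afterwards.

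Your route---use the inclusions $F_n \to F_{n+m}$ and $BD_n \to BD_{n+m}$, a collapse $\bar{S}_{n+m} \to \Sigma^m\bar{S}_n$, and then recover May's map---runs into the problem that the inclusion on $BD$ points the wrong way for the contravariant $\Map$, so the square you describe does not assemble naturally into the diagram of Theorem~\ref{thm:2}. You also invoke ``the $\R^n$-projected configuration'' of an element of $F_{n+m}$, but there is no operad map $F_{n+m} \to F_n$: projecting a configuration in $\R^{n+m}$ to $\R^n$ can make distinct points collide. This is exactly why May's construction packages the $\R^m$-data into the suspension/sphere factor rather than discarding it. The fix is to work instead with the quasi-cooperad form $\alpha_\#: \Sigma^\infty\mathbb{B}F_V \to \Map(D_V,\Sigma^\infty\bar{S}_V)$, take the inclusion $F_V \to F_{V\oplus W}$ on the left, and on the right use $\Map(\kappa,\sigma)$ where $\kappa$ is May's map and $\sigma: \bar{S}_V \wedge_\Delta \bar{S}_W \to \bar{S}_{V\oplus W}$ is the evident sphere comparison; then commutativity is immediate from the explicit formula for $\alpha$.
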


As a consequence we obtain in \ref{cor:lie} a description for the spectral Lie operad as the homotopy inverse limit of the operads $\Sigma^{-n}E_n$. This description also appears in \cite[Prop. 10]{hu/kriz/somberg:2019}.

\subsection*{Applications and future directions}

As mentioned above, it should be possible to find a close connection between the operad equivalence constructed in this paper with the bar and cobar functors between $E_n$-algebras and $E_n$-coalgebras described by Ayala-Francis~\cite{ayala/francis:2021} and Lurie~\cite{lurie:2017}. Steps in this direction are taken by Amabel in~\cite[7.3]{amabel:2019} based on further work of Ayala-Francis~\cite{ayala/francis:2019}.

Another algebraic Koszul duality that has the potential to be realized on the level of spectra is the relationship between the `hypercommutative' and `gravity' operads described by Getzler in~\cite{getzler:1995}. The hypercommutative operad is that given by the homology of the Deligne-Mumford-Knudsen compactifications $\overline{\mathcal{M}}_{0,*+1}$ of the moduli spaces of genus $0$ Riemann surfaces with marked points. Getzler's gravity operad is formed from the (shifted) homology of the uncompactified moduli spaces $\mathcal{M}_{0,*+1}$.

Work of Ward~\cite[Sec. 4]{ward:2019} outlines how the main theorem of this paper may be use to promote Getzler's duality result to an equivalence between operads of spectra
\[ K(\Sigma^\infty_+\overline{\mathcal{M}}_{0,*+1}) \homeq \Sigma \Sigma^\infty_+ \mathcal{M}_{0,*+1} \]
where the operad structure on the right-hand side is identified by Westerland~\cite{westerland:2008} as the homotopy fixed point spectra of the $S^1$-action on the stable little $2$-discs. Drummond-Cole has shown in~\cite{drummondcole:2014} that the topological operad $\overline{\mathcal{M}}_{0,*+1}$ bears a close relationship with the \emph{framed} little $2$-discs operad $fE_2$, and Ward's work on bar construction for non-reduced operads~\cite{ward:2019}, together with Theorem~\ref{thm:1}, ties all these constructions together.

Much of the first author's interest in the present project comes from its connections to Goodwillie calculus. In particular, Greg Arone and the first author showed in~\cite{arone/ching:2016} that the Taylor towers of functors from based spaces to spectra are entirely classified by right modules over the inverse sequence of operads
\[ \dots \to K\mathbf{E}_3 \to K\mathbf{E}_2 \to K\mathbf{E}_1. \]
In light of Theorem~\ref{thm:2}, we can now replace this sequence with one of the form
\[ \dots \to \Sigma^{-3}\mathbf{E}_3 \to \Sigma^{-2}\mathbf{E}_2 \to \Sigma^{-1}\mathbf{E}_1 \]

Arone and the first author showed in~\cite{arone/ching:2017} that the Taylor tower for Waldhausen's algebraic $K$-theory of spaces functor $A(X)$ in particular arises from a module over $K\mathbf{E}_3$ though an explicit description of that module was not given. We might now hope to provide instead the corresponding $\Sigma^{-3}\mathbf{E}_3$-module (or, equivalently, an $\mathbf{E}_3$-module) which would yield formulas for the Taylor tower of the algebraic $K$-theory functor.

\subsection*{Outline of our approach}

Our proof of Theorem~\ref{thm:1} relies on two different models for the little $n$-disc operad $E_n$, which we describe in Sections~\ref{sec:restricted} and \ref{sec:FM} respectively.

The first model is the suboperad $D_n$ of $E_n$ formed by those configurations of discs whose radii sum to $1$, and whose barycenter weighted by the radii is the origin. We prove in Theorem~\ref{thm:D} that the inclusion $D_n \subseteq E_n$ is an equivalence of operads. This should seem plausible: with little discs whose radii sum to $1$ it is still possible to move the discs around each other without moving the barycenter; from knowledge of the homotopy groups of configuration spaces, it is relatively easy to show that $D_n \to E_n$ is surjective on homotopy. In fact, we pursue a more geometric approach here and construct a deformation retraction onto $D_n$ from a larger space that we can easily show to be equivalent to $E_n$.

Our second model for $E_n$ is the familiar \emph{Fulton-MacPherson operad}, which we denote $F_n$. The operad $F_n$ was defined by Getzler and Jones in~\cite{getzler/jones:1994}, and provides a conveniently small model for the topological $E_n$-operad: the space $F_n(k)$ is a manifold with corners of dimension $n(k-1)-1$. The second author proved that $F_n$ is a cofibrant model for $E_n$ in \cite{salvatore:2001}.

Given these two models for $E_n$, our approach to Theorem~\ref{thm:1} is very direct. In Section~\ref{sec:map}, we build explicit maps (of pointed spaces)
\begin{equation} \label{eq:1} F_n(k)_+ \smsh BD_n(k) \to \bar{S}_n(k) \end{equation}
which respect the operad structure on $F_n$ and the bar-cooperad structure on $BD_n$. In (\ref{eq:1}), $\bar{S}_n$ is an operad of pointed spaces that admits an equivalence (i.e.\ an arity-wise weak homotopy equivalence of pointed spaces):
\[ S_n \weq \bar{S}_n \]
from the $n$-sphere operad we use to define the operadic desuspension. Taking suitable adjuncts, we construct from (\ref{eq:1}) a map of operads of spectra
\begin{equation} \label{eq:intro-map} \Sigma^\infty_+ F_n \to \Map(BD_n, \Sigma^\infty \bar{S}_n). \end{equation}
We prove in Section~\ref{sec:proof} that the map (\ref{eq:intro-map}) is an equivalence of operads, which implies Theorem~\ref{thm:1}.

Our proof uses the bar-cobar duality for operads of spectra from~\cite{ching:2012}, recalled in Section~\ref{sec:bar-cobar}, to rewrite (\ref{eq:intro-map}) as a map of quasi-cooperads
\begin{equation} \label{eq:dual-map} \Sigma^\infty \mathbb{B}F_n \to \Map(D_n,\Sigma^\infty \bar{S}_n). \end{equation}
Here $\mathbb{B}$ denotes the left Quillen functor from the bar-cobar Quillen equivalence of \cite{ching:2012}; thus $\mathbb{B}F_n$ is a model for the bar construction $BF_n$. The functor $\mathbb{B}$ is described in more detail in Section~\ref{sec:quasi}.

We ultimately prove that (\ref{eq:dual-map}) is an equivalence using an S-duality result of Dold and Puppe~\cite{dold/puppe:1980} which we recall in Theorem~\ref{thm:dold/puppe}. That result gives conditions under which the one-point compactification $U^+$ of an open subset $U \subseteq \R^N$ should be Spanier-Whitehead $N$-dual to $U$ itself.

We apply the Dold-Puppe result by showing that the space $\mathbb{B}F_n(k)$ is homeomorphic to the one-point compactification of the configuration space of $k$ points in $\R^n$, modulo translation, which we can view as an open subset in $\R^{n(k-1)}$. We conclude that $\mathbb{B}F_n(k)$ is Spanier-Whitehead dual to that configuration space, and hence to $D_n(k)$, up to a shift by dimension $n(k-1)$.

The claim made in the previous paragraph, that the individual terms in the bar construction $\mathbb{B}F_n$ are Spanier-Whitehead duals of the configuration spaces themselves, appeared already in the second author's thesis in 1999. The impediment to proving Theorem~\ref{thm:1} since then has been the difficulty in putting these equivalences together into an actual map of operads, i.e.\ maps that commute on the nose with the operad structures involved. The explicit construction of the maps (\ref{eq:1}), and the demonstration that they have the required properties, is therefore the main substance of this paper.

\subsection*{The barycentre (co)operad}

It is worth highlighting here in the introduction a key object which helped us solve the crucial problem of building Spanier-Whitehead duality maps that respect the operad structures.

As noted above, a standard way to calculate the Spanier-Whitehead dual of a space $U$ is via an embedding of $U$ in Euclidean space $\R^N$ for some $N$. In our case these are the embeddings of the configuration spaces into the Euclidean space $\R^{n(k-1)}$ of all $k$-tuples in $\R^n$, modulo translation. In order that the resulting duality maps preserve the operad structures, we needed these Euclidean spaces to possess an operad structure of their own. But it does not seem to be possible to define suitable composition maps on the spaces $R_n(k) = \R^{nk}/\R^n$ that are strictly associative.

We did consider approaches involving $\infty$-operads, via models such as the dendroidal Segal spaces of Cisinski and Moerdijk~\cite{cisinski/moerdijk:2013}, but in the end we developed a different solution which we refer to as the \emph{barycentre (co)operad}. The key idea is to parametrize
%changed
our constructions over the simplex operad of Arone and Kankaanrinta~\cite{arone/kankaanrinta:2014}.

We write
\[ R_n(k) := \{ (x,t) \in (\R^n)^k \times (0,1)^k \; | \; \sum_i t_i = 1, \; \sum_i t_i x_i = 0 \}. \]
The space $R_n(k)$ is a (trivial) vector bundle over the open simplex $\Delta(k) = \Delta^{k-1}$ with fibre isomorphic to the space $\R^{nk}/\R^n$ of $k$-tuples in $\R^n$, modulo translation. The fibre over a given $t \in \Delta(k)$ is the vector space of $k$-tuples that satisfy a barycentre condition with respect to weights in $t$.

The central construction of this paper is an $O(n)$-equivariant (co)operad structure on the spaces $R_n$, described by the formulas in Definition~\ref{def:P2}, which, at least in a fibrewise manner, realizes the goal of an operad of vector spaces into which configuration spaces can be embedded.

Another crucial feature of $R_n$ is the observation that these same formulas underlie the algebra of the little discs operads, where the vectors $x_i$ represent the centres of the little discs, and the numbers $t_i$ represent the radii of those discs. In particular, our restricted little discs operad $D_n$ admits a natural embedding into the operad $R_n$. Finally, the fibrewise one-point compactifications of $R_n$ also form the (co)operads $S_n$ of pointed spaces that we use to model the desuspension of operads of spectra.

\subsection*{A note on notation}

To emphasize the coordinate-free nature of our constructions, we will base all of our $E_n$-operads on an arbitrary finite-dimensional real vector space $V$, writing $E_V$ for the corresponding operad. Similarly, the corresponding Fulton-MacPherson operad will be denoted $F_V$, and so on. This choice also allows us to highlight those constructions that depend on a choice of norm on $V$. The reader that wishes to recover the ordinary little $n$-discs/cubes operad will take $V = \R^n$ with the Euclidean/$\ell^\infty$-norm respectively.

\subsection*{Acknowledgments}

We started this project over 15 years ago, but the key insights came during our stay at the Isaac Newton Institute in Cambridge, during the programme \emph{Homotopy Harnessing Higher Structures} in 2018. We would like to thank the Isaac Newton Institute, and the organizers of the HHH programme, for their support and hospitality. We are also particularly grateful to Benoit Fresse for many discussions on this project, and for inviting us to visit Universit\'{e} de Lille where some of the ideas in this paper were developed. The first author would like to thank Greg Arone for continual conversation and encouragement over those 15 years. We are also grateful to an anonymous referee for careful reading and feedback which led to several improvements, in particular to Section~\ref{sec:restricted}.

\section{The barycentric, simplex and sphere (co)operads}

In this section we will describe the framework we use for operads and cooperads of topological spaces, and introduce some basic examples that underlie many of the constructions of this paper.

The traditional definition of an operad of spaces starts with a sequence $(P(n))_{n \geq 0}$ of spaces together with an action of the symmetric group $\Sigma_n$ on $P(n)$. The operad structure maps consist of `composition' maps of the form
\[ P(k) \times P(n_1) \times \dots \times P(n_k) \to P(n_1+\dots+n_k) \]
and a `unit' map
\[ * \to P(1) \]
that together satisfy standard associativity and unit conditions.

All the operads and cooperads we wish to consider in this paper are \emph{reduced} in the sense that they satisfy $P(0) = \varnothing$ and $P(1) = *$. We will build these properties into our definition of operad by only considering the values $P(n)$ for $n \geq 2$. Given this restriction, the data of an operad can instead be described via `partial' composition maps of the form
\[ \circ_i: P(k) \times P(l) \to P(k+l-1) \]
for $i = 1,\dots,k$.

In order to easily include the symmetric group equivariance, and avoid the complication of renumbering issues, we think of the underlying sequence of spaces in the operad as a functor on the category $\mathsf{FinSet}_{\geq 2}$ of finite sets of cardinality at least $2$ and bijections. The partial composition maps can then be described in the form
\[ \circ_i : P(I) \times P(J) \to P(I \cup_i J) \]
for $i \in I$, where $I$ and $J$ are finite sets (of cardinality at least $2$) and $I \cup_i J$ denotes the disjoint union $(I - \{i\}) \amalg J$. These maps should be natural with respect to bijections $I \isom I'$ and $J \isom J'$, as well as satisfying the usual associativity conditions. We therefore arrive at the following definition.

\begin{definition} \label{def:operad}
A \emph{(reduced) operad} of topological spaces consists of:
\begin{itemize}
  \item a functor $P: \mathsf{FinSet}_{\geq 2} \to \spaces$, where $\spaces$ is the category of topological spaces;
  \item for $I,J \in \mathsf{FinSet}_{\geq 2}$ and each $i \in I$, a \emph{composition map}
  \[ \circ_i: P(I) \times P(J) \to P(I \cup_i J). \]
\end{itemize}
The composition maps $\circ_i$ should satisfy the following conditions:
\begin{itemize}
  \item naturality in $I$ and $J$;
  \item two forms of associativity: for $I,J,K \in \mathsf{FinSet}_{\geq 2}$, $i \in I$ and $j \in J$:
  \[ \begin{diagram}
    \node{P(I) \times P(J) \times P(K)} \arrow{e,t}{\circ_i \times P(K)} \arrow{s,l}{P(I) \times \circ_j} \node{P(I \cup_i J) \times P(K)} \arrow{s,r}{\circ_j} \\
    \node{P(I) \times P(J \cup_j K)} \arrow{e,t}{\circ_i} \node{P(I \cup_i J \cup_j K)}
  \end{diagram} \]
  and for $I,J,J' \in \mathsf{FinSet}_{\geq 2}$, $i \neq i' \in I$:
  \[ \begin{diagram}
    \node{P(I) \times P(J) \times P(J')} \arrow{e,t}{\circ_i \times P(J')} \arrow{s,l}{(\circ_{i'} \times P(J))(P(I) \times \sigma)} \node{P(I \cup_i J) \times P(J')} \arrow{s,r}{\circ_{i'}} \\
    \node{P(I \cup_{i'} J') \times P(J)} \arrow{e,t}{\circ_i} \node{P(I \cup_i J \cup_{i'} J')}
  \end{diagram} \]
\end{itemize}
where $\sigma$ interchanges the factors $P(J)$ and $P(J')$.
\end{definition}

\begin{definition} \label{def:cooperad}
A \emph{cooperad} $Q$ of topological spaces is an operad in the opposite category of topological spaces. In particular, a cooperad consists of spaces $Q(I)$ together with \emph{decomposition maps}
\[ \nu_i: Q(I \cup_i J) \to Q(I) \times Q(J) \]
satisfying diagrams dual to those in Definition~\ref{def:operad}. Note that if the composition maps for an operad are homeomorphisms then their inverses are the decomposition maps for a cooperad, and vice versa.
\end{definition}

\begin{definition} \label{def:operad-spectra}
Let $\spectra$ be a symmetric monoidal model for stable homotopy theory, for example, the category of symmetric spectra. We define \emph{operads} or \emph{cooperads of spectra} as in Definitions~\ref{def:operad} and~\ref{def:cooperad} with the category of topological spaces replaced by $\spectra$ and the cartesian product replaced by the smash product of spectra.
\end{definition}

\begin{example}
Assume that there is a strong monoidal model for the suspension spectrum functor $\Sigma^\infty_+: \spaces \to \spectra$. Then for any operad (or cooperad) $P$ of topological spaces, the termwise suspension spectrum $\Sigma^\infty_+P$ is an operad (respectively, a cooperad) of spectra.
\end{example}

Almost all of the operads in this paper are built from the following example which we call the \emph{overlapping discs operad}.

\begin{definition} \label{def:P}
Let $V$ be a finite-dimensional real vector space, and let $I$ be a nonempty finite set. Let $P_V(I)$ be the topological space
\[ P_V(I) := V^I \times (0,\infty)^I. \]
By choosing a norm on $V$, we might visualize a point $(x,t)$ in $P_V(I)$ as a collection of (closed) discs in $V$, indexed by the set $I$, where the $i$-th disc, for $i \in I$, has centre $x_i \in V$ and radius $t_i > 0$. There are no conditions preventing the discs from overlapping. When $V = 0$, a point in $P_0(I)$ can be identified simply with an $I$-indexed sequence $t$ of positive real numbers.
\end{definition}

We define composition maps that make the symmetric sequence $P_V$ into an operad of topological spaces by extending the structure of the usual little discs operad to the spaces $P_V(I)$.

\begin{definition} \label{def:P2}
For nonempty finite sets $I,J$ and $i \in I$, we define
\[ +_i: P_V(I) \times P_V(J) \to P_V(I \cup_i J) \]
by
\[ (x,t), (y,u) \mapsto (x +_i ty, t \cdot_i u) \]
where
\[ (t \cdot_i u)_j := \begin{cases} t_j & \text{if $j \notin J$}; \\ t_iu_j & \text{if $j \in J$}; \end{cases} \]
and
\[ (x +_i ty)_j := \begin{cases} x_j & \text{if $j \notin J$}; \\ x_i + t_iy_j & \text{if $j \in J$}. \end{cases} \]
\end{definition}

\begin{proposition} \label{prop:P}
The composition maps Definition~\ref{def:P2} make $P_V$ into an operad of topological spaces, which we refer to as the \emph{overlapping $V$-discs operad}.
\end{proposition}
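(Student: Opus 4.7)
The plan is to verify the three conditions in Definition~\ref{def:operad} directly: naturality of the $+_i$ maps in $I$ and $J$, vertical associativity, and horizontal associativity. The formulas in Definition~\ref{def:P2} are stated purely in terms of the indexing sets (no choice of ordering is ever made), so naturality in bijections $I \isom I'$ and $J \isom J'$ is immediate and requires no argument beyond recording it. The interesting content is the two associativity diagrams, which are straightforward computations paralleling those for the ordinary little discs operad, but with no geometric constraints to maintain.

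The proof splits cleanly into independent checks on the radius coordinate $t \in (0,\infty)^I$ and the center coordinate $x \in V^I$. For the radii, the composition $(t \cdot_i u)$ is multiplication by $t_i$ on coordinates indexed by $J$ and the identity on the remaining coordinates of $I - \{i\}$; associativity of multiplication in $(0,\infty)$ gives both associativity diagrams at the level of $t$ essentially for free.

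For the center coordinates, the key identity for vertical associativity is the following: given $(x,t) \in P_V(I)$, $(y,u) \in P_V(J)$, $(z,v) \in P_V(K)$ and $i \in I$, $j \in J$, the $K$-indexed centers of $(y,u) +_j (z,v)$ are $y_j + u_j z_k$, so after composing with $(x,t)$ at slot $i$ they become
\[ x_i + t_i(y_j + u_j z_k) = (x_i + t_i y_j) + (t_iu_j) z_k, \]
which is precisely the expression obtained by first forming $(x,t) +_i (y,u)$ (whose $j$-th slot has center $x_i + t_iy_j$ and radius $t_iu_j$) and then inserting $(z,v)$. Indices lying outside $K$ transform identically on both sides. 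Horizontal associativity is even simpler: for $i \neq i'$ in $I$, the inserted sets $J$ and $J'$ occupy disjoint subsets of $I \cup_i J \cup_{i'} J'$, so the two partial compositions act on disjoint coordinates of both $x$ and $t$ and therefore commute (up to the swap $\sigma$) without any interaction.

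I do not anticipate any real obstacle. Conceptually, the proposition records the fact that the algebraic formulas governing the little discs operad continue to define an operad once the geometric conditions (disjointness of disc interiors, containment in a unit disc) are dropped; everything reduces to the distributive law in $V$ together with associativity of multiplication in $(0,\infty)$.
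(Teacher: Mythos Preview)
Your proposal is correct. The paper does not actually supply a proof of this proposition: it is stated without proof, followed only by an informal paragraph explaining how to visualize the operad structure by rescaling and inserting configurations of (possibly overlapping) discs. Your verification of the two associativity axioms via the identity $x_i + t_i(y_j + u_j z_k) = (x_i + t_i y_j) + (t_iu_j) z_k$ and the disjoint-support argument for horizontal associativity is exactly the routine check the paper leaves to the reader, so there is nothing to compare beyond noting that you have filled in what the authors chose to omit.
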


The operad structure on $P_V$ can be visualized in the much the same way as for the standard little discs operad, except without any of the usual restrictions on the positions and sizes of the `little' discs. Starting with two configurations of discs $(x,t) \in P_V(I)$ and $(y,u) \in P_V(J)$, the configuration $(x,t) +_i (y,u)$ is given by dilating the configuration $(y,u)$ by a factor of $t_i$, and inserting it in place of the $i$-th disc of the configuration $(x,t)$, i.e.\ centred at the point $x_i \in V$.

The familiar little-discs operad can be identified as a suboperad of $P_V$, which we describe via a choice of norm on the vector space $V$.

\begin{proposition} \label{prop:E}
Let $V$ be finite-dimensional real normed vector space. Then the subspaces
\[ E_V(I) := \left\{ (x,t) \in P_V(I) \; | \; |x_i| \leq 1-t_i, \; |x_i - x_j| \geq t_i+t_j \right\} \]
form a suboperad of $P_V$, which we refer to as the \emph{little $V$-disc operad}.
\end{proposition}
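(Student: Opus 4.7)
The proposition asserts two things: that $E_V$ is preserved by the symmetric group action (i.e.\ by the functoriality of $P_V$ on bijections), and that the partial compositions $+_i$ restrict to maps $E_V(I) \times E_V(J) \to E_V(I \cup_i J)$. The first is immediate, since both defining inequalities are symmetric in the indexing set. So the entire content of the proof is to verify that the two conditions $|z_\ell| \leq 1 - s_\ell$ and $|z_\ell - z_m| \geq s_\ell + s_m$ are preserved under $+_i$, where I write $(z,s) := (x,t) +_i (y,u)$.

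I would unpack the definition and check the inequalities case by case according to whether the indices $\ell, m \in I \cup_i J = (I \setminus \{i\}) \amalg J$ lie in the old set $I \setminus \{i\}$ or the inserted set $J$. For the centre condition, the case $\ell \notin J$ is immediate from $(x,t) \in E_V(I)$, and the case $\ell \in J$ follows from the triangle inequality:
\[ |x_i + t_i y_\ell| \leq |x_i| + t_i|y_\ell| \leq (1-t_i) + t_i(1 - u_\ell) = 1 - t_i u_\ell. \]
For the disjointness condition, when $\ell, m$ are both in $I \setminus \{i\}$ or both in $J$, the inequality is inherited directly (in the latter case after pulling out the common factor $t_i$). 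The only case requiring the triangle inequality in the reverse direction is $\ell \in I \setminus \{i\}, \; m \in J$, where
\[ |x_\ell - x_i - t_i y_m| \geq |x_\ell - x_i| - t_i|y_m| \geq (t_\ell + t_i) - t_i(1 - u_m) = t_\ell + t_i u_m = s_\ell + s_m, \]
using $|x_\ell - x_i| \geq t_\ell + t_i$ from $(x,t) \in E_V(I)$ and $|y_m| \leq 1 - u_m$ from $(y,u) \in E_V(J)$.

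Since all four cases have been checked and the conditions are preserved, the partial composition $+_i$ of $P_V$ restricts to a continuous map $E_V(I) \times E_V(J) \to E_V(I \cup_i J)$. The associativity and equivariance conditions of Definition~\ref{def:operad} are then automatically satisfied on $E_V$, since they are satisfied on the ambient operad $P_V$ by Proposition~\ref{prop:P}. I do not anticipate any real obstacle here: the argument is a short triangle-inequality verification, and the ``mixed'' case above is the only one with any subtlety, being the classical reason that the usual little-discs operad is closed under operadic composition.
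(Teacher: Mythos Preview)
Your proof is correct and complete; the triangle-inequality case analysis you give is exactly the standard verification. The paper itself states this proposition without proof, presumably regarding it as well-known (it is, after all, just the classical fact that the little discs operad is closed under composition), so there is nothing further to compare.
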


\begin{remark}
When $V$ is $\R^n$ with the Euclidean norm, the operad $E_V$ is the ordinary little $n$-discs operad of Boardman-Vogt~\cite{boardman/vogt:1973}, in which a point consists of a collection of non-overlapping discs inside the unit disc in $\R^n$. Taking the $\ell^\infty$-norm on $\R^n$ instead, we obtain the little $n$-\emph{cubes} operad.
\end{remark}

\begin{definition}
For a (topological) group $G$, a \emph{$G$-operad} consists of an operad $P$ together with an action of $G$ on each $P(I)$ such that all the structure maps for $P$ are $G$-equivariant.
\end{definition}

\begin{example}
The general linear group $GL(V)$ acts on each space $P_V(I)$ via the diagonal action on $V^I$ and trivially on $(0,\infty)$. Since the composition maps for $P_V$ are linear in the vector space components, these actions make $P_V$ into a $GL(V)$-operad. The orthogonal group $O(V)$, for the normed vector space $V$, acts on $E_V(I)$, and makes $E_V$ into an $O(V)$-operad.
\end{example}

We will make particular use of the following suboperad of $P_V$.

\begin{definition} \label{def:R}
Let $R_V(I)$ be the subspace of $P_V(I)$ consisting of the following spaces
\[ \textstyle R_V(I) := \{ (x,t) \in P_V(I) \; | \; \sum_{i \in I} t_i = 1, \; \sum_{i \in I} t_i x_i = 0 \}. \]
In other words, a configuration of discs is in $R_V(I)$ if the sum of the radii of the discs is $1$, and the centres of the discs have (weighted) barycentre equal to the origin.
\end{definition}

\begin{proposition} \label{prop:R}
The subspaces $R_V(I)$ form a $GL(V)$-suboperad of $P_V$ for which the composition maps are homeomorphisms. We will refer to $R_V$ as the \emph{barycentre (co)operad}.
\end{proposition}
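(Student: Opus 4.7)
The plan is to verify three things: that the composition maps $+_i$ of Definition~\ref{def:P2} restrict to $R_V$, that the $GL(V)$-action on $P_V$ restricts to $R_V$, and finally that the restricted composition maps admit continuous two-sided inverses.

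First I would check closure of $R_V$ under composition by direct computation. Given $(x,t) \in R_V(I)$ and $(y,u) \in R_V(J)$, I would substitute the defining formulas for $(x,t) +_i (y,u) = (x +_i ty, t \cdot_i u)$ into the two barycentric constraints. For the weights, splitting the sum across $I - \{i\}$ and $J$ gives
\[ \sum_{j \in I - \{i\}} t_j + t_i \sum_{j \in J} u_j = \sum_{j \in I - \{i\}} t_j + t_i = 1, \]
using $\sum_j u_j = 1$ and then $\sum_j t_j = 1$. For the weighted barycentre,
\[ \sum_{j \in I - \{i\}} t_j x_j + \sum_{j \in J} t_i u_j (x_i + t_i y_j) = \sum_{j \in I - \{i\}} t_j x_j + t_i x_i + t_i^2 \sum_{j \in J} u_j y_j = 0, \]
where the first two terms give $\sum_{j \in I} t_j x_j = 0$ and the last term vanishes by the barycentre condition on $(y,u)$. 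The $GL(V)$-equivariance is immediate: since $g \in GL(V)$ acts linearly on positions and trivially on weights, $\sum_i t_i (g x_i) = g\bigl(\sum_i t_i x_i\bigr) = 0$, and the action commutes with $+_i$ since the latter only uses the linear structure on $V$.

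The main substance is constructing an inverse to $+_i \colon R_V(I) \times R_V(J) \to R_V(I \cup_i J)$. Given $(z,s)$ on the right-hand side I would solve the equations $(t \cdot_i u) = s$ and $(x +_i ty) = z$ directly. The weight component determines
\[ t_j = s_j \text{ for } j \in I - \{i\}, \qquad t_i := \sum_{j \in J} s_j, \qquad u_j := s_j / t_i \text{ for } j \in J; \]
note $t_i > 0$ since all $s_j > 0$, so this is well-defined, and one checks that $\sum_{j \in I} t_j = 1$ and $\sum_{j \in J} u_j = 1$ using $\sum_{j \in I \cup_i J} s_j = 1$. For the position component one takes $x_j = z_j$ for $j \in I - \{i\}$, and the barycentre constraint on $(x,t)$ forces
\[ x_i := -\frac{1}{t_i} \sum_{j \in I - \{i\}} s_j z_j = \frac{1}{t_i} \sum_{j \in J} s_j z_j, \]
the two expressions agreeing thanks to $\sum_{j \in I \cup_i J} s_j z_j = 0$. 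Finally $y_j := (z_j - x_i)/t_i$, and a short calculation shows $\sum_{j \in J} u_j y_j = \frac{1}{t_i^2}\bigl( \sum_j s_j z_j - x_i t_i\bigr) = 0$, so $(y,u) \in R_V(J)$.

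The formulas above are continuous in $(z,s)$ on the open locus where all $s_j > 0$, which holds throughout $R_V(I \cup_i J)$, and they are visibly inverse to $+_i$, so each composition map is a homeomorphism. The hardest part of the argument is really just bookkeeping: identifying the correct value of $x_i$ (for which the two natural expressions coincide precisely because of the barycentric constraint on $(z,s)$) and then verifying that the reconstructed $(y,u)$ lies in $R_V(J)$. Once that is in hand, the proposition follows and, by Definition~\ref{def:cooperad}, the inverse homeomorphisms simultaneously endow $R_V$ with the structure of a cooperad, justifying the name \emph{barycentre (co)operad}.
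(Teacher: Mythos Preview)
Your proof is correct and follows essentially the same approach as the paper's: both verify closure under $+_i$ by direct computation and then write down the explicit inverse formulas. Your $(t_i, x_i, u_j, y_j)$ are exactly the paper's $(t/J)_i$, $(x/J)_i$, $(t|J)_j$, $(x|J)_j$ in different notation, and you have spelled out a few verifications (closure, $GL(V)$-invariance, membership of $(y,u)$ in $R_V(J)$) that the paper leaves as ``easy to check''.
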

\begin{proof}
It is easy to check that the conditions on $R_V(I)$ are stable under the composition maps in Definition~\ref{def:P2}. To see that these composition maps are homeomorphisms on $R_V$ we explicitly describe the inverses.

For $(x,t) \in R_V(I \cup_i J)$ we define $(x/J,t/J) \in R_V(I)$ by
\[ \begin{split} (x/J)_{i'} &:= x_{i'} \quad \text{if $i' \neq i$}; \\ (x/J)_i &:= \frac{\sum_{j \in J} t_j x_j}{\sum_{j \in J} t_j} \end{split} \]
and
\[ \begin{split} (t/J)_{i'} &:= t_{i'} \quad \text{if $i' \neq i$}; \\ (t/J)_i &:= \sum_{j \in J} t_j \end{split}. \]
We also define $(x|J,t|J) \in R_V(J)$ by
\[ (x|J)_{j} := \frac{x_{j} - \frac{\sum_{j \in J} t_j x_j}{\sum_{j \in J} t_j}}{\sum_{j' \in J} t_{j'}} \]
and
\[ (t|J)_{j} := \frac{t_{j}}{\sum_{j' \in J} t_{j'}}. \]
It is simple to check that $(x,t) \mapsto (x/J,t/J),(x|J,t|J)$ defines a continuous inverse to the composition map $R_V(I) \times R_V(J) \to R_V(I \cup_i J)$, making $R_V$ also into a cooperad.
\end{proof}

Setting $V = 0$ in Definition~\ref{def:R} gives a (co)operad of special importance.

\begin{definition} \label{def:Delta}
Let $\Delta = R_0$ be the barycentre (co)operad corresponding to the zero vector space. Then we have
\[ \textstyle \Delta(I) = \{ t \in (0,\infty)^I \; | \; \sum_{i \in I} t_i = 1 \} \]
with composition maps (homeomorphisms) $t,u \mapsto t \cdot_i u$ given by the formula in Definition~\ref{def:P}. Since the terms in the (co)operad $\Delta$ are the ordinary (open) topological simplexes, we also refer to $\Delta$ as the \emph{simplex (co)operad}. This operad was also described by Arone and Kankaanrinta~\cite{arone/kankaanrinta:2014} where it is denoted $\Delta_1$.
\end{definition}

\begin{proposition} \label{prop:R-bundle}
The projection map
\[ R_V(I) \to \Delta(I); \quad (x,t) \mapsto t \]
is a trivial vector bundle with fibre isomorphic to the vector space $V^I/V$ of $I$-tuples in $V$ modulo translation. Together these maps form a morphism of operads $R_V \to \Delta$.
\end{proposition}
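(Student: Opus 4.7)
The projection $\pi\colon R_V(I)\to\Delta(I)$ is the restriction of the coordinate projection $V^I\times(0,\infty)^I\to(0,\infty)^I$ to $R_V(I)$. For each fixed $t\in\Delta(I)$, the fibre $\pi^{-1}(t)$ is the linear subspace
\[ F_t := \left\{x\in V^I \;\Big|\; \sum_{i\in I} t_i x_i = 0\right\} \subseteq V^I, \]
a vector subspace of codimension $\dim V$ (it is the kernel of the surjective linear map $V^I\to V$, $x\mapsto\sum t_i x_i$, using $t_i>0$). So set-theoretically the fibres are vector spaces of the expected dimension $(|I|-1)\dim V$; the first task is to promote this to a global trivialization.

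The plan is to build an explicit trivialization using the weighted-barycentre map. Define
\[ \Phi\colon \Delta(I)\times V^I/V \longrightarrow R_V(I), \qquad \Phi(t,[y]) := \bigl(y - \bar{y}_t\cdot\mathbf{1},\; t\bigr), \]
where $\bar{y}_t := \sum_{i\in I} t_i y_i$ and $\mathbf{1}\in V^I$ is the constant tuple. I will check three things: (i) $\Phi$ is well-defined on $V^I/V$ (if $y'=y+v\cdot\mathbf{1}$ then $\bar{y}'_t=\bar{y}_t+v$, so $y'-\bar{y}'_t\mathbf{1}=y-\bar{y}_t\mathbf{1}$); (ii) the image lies in $R_V(I)$, because $\sum t_i(y_i-\bar{y}_t)=\bar{y}_t-\bar{y}_t=0$; (iii) the inverse $(x,t)\mapsto(t,[x])$ is continuous, using that the projection $V^I\to V^I/V$ restricts to a linear isomorphism $F_t\xrightarrow{\;\sim\;} V^I/V$ on each fibre (any element in the kernel of $F_t\to V^I/V$ is of the form $v\cdot\mathbf{1}$ with $\sum t_i v=v=0$). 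Both $\Phi$ and its inverse are continuous (in fact, smooth and fibrewise linear), so $\pi$ is a trivial vector bundle with fibre $V^I/V$.

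It remains to check that the collection $\{\pi\colon R_V(I)\to\Delta(I)\}_I$ is a map of operads. This is immediate from Definition~\ref{def:P2}: for $(x,t)\in R_V(I)$ and $(y,u)\in R_V(J)$, the formula
\[ (x,t)+_i(y,u) = \bigl(x+_i ty,\; t\cdot_i u\bigr) \]
shows that the radius coordinate of the composite is exactly $t\cdot_i u$, which is the image of $(t,u)$ under the simplex composition $\circ_i\colon\Delta(I)\times\Delta(J)\to\Delta(I\cup_i J)$ inherited from Definition~\ref{def:P}. Thus $\pi(\,(x,t)+_i(y,u)\,)=\pi(x,t)\circ_i\pi(y,u)$, so $\pi$ commutes with $\circ_i$ on the nose. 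Naturality in $I$ and $J$ is obvious since $\pi$ is just a coordinate projection.

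\textbf{Expected obstacle.} There isn't really a deep one: the argument is elementary linear algebra plus an unwinding of the definitions. The only spot that requires a moment's care is verifying that $\Phi$ is a genuine homeomorphism (equivalently, that the natural fibrewise map $F_t\to V^I/V$ is an isomorphism), which relies on the positivity $t_i>0$ guaranteed by $t\in\Delta(I)$; without this one could not invert the weighted barycentre.
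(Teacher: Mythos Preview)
Your proof is correct and takes essentially the same approach as the paper: the paper's trivialization is exactly your map $(x,t)\mapsto([x],t)$, with inverse described as ``pick out the unique representative in the equivalence class $[x]$ that has weighted barycentre $0$,'' which is your $\Phi$. Your treatment is simply more explicit, and you additionally spell out the operad-morphism check, which the paper leaves implicit.
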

\begin{proof}
The necessary homeomorphism $R_V(I) \isom V^I/V \times \Delta(I)$ is given by
\[ (x,t) \mapsto ([x],t) \]
where $[x]$ denotes the equivalence class of $x \in V^I$ modulo diagonal 
%changed
translation. The inverse map picks out the unique representative in the equivalence class $[x]$ that has weighted barycentre $0$.
\end{proof}

\begin{definition} \label{def:SV}
Let $S_V(I)$ denote the Thom space of the bundle $R_V(I) \to \Delta(I)$ of Proposition~\ref{prop:R-bundle}. In other words, $S_V(I)$ is obtained from the fibrewise one-point compactification of $R_V(I)$ over $\Delta(I)$,
by identifying all the points at infinity. Since $R_V(I)$ is a trivial bundle with fibre $V^I/V$, we can identify $S_V(I)$ with $S^{V^I/V} \smsh \Delta(I)_+$. Note that $S_V(I)$ is homotopy equivalent to a sphere of dimension $(|I|-1)\dim(V)$.
\end{definition}

\begin{proposition} \label{prop:S}
The operad structure maps of $R_V$ extend to homeomorphisms
\[ S_V(I) \smsh S_V(J) \isom S_V(I \cup_i J) \]
which make $S_V$ into a $GL(V)$-(co)operad of pointed spaces, which we refer to as the \emph{$V$-sphere-(co)operad}.
\end{proposition}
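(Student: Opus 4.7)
The plan is to leverage Proposition~\ref{prop:R} together with Proposition~\ref{prop:R-bundle}: the composition map $\circ_i \colon R_V(I) \times R_V(J) \to R_V(I \cup_i J)$ is already a homeomorphism, and I want to show that it extends across the points at infinity in the fibrewise compactification so as to descend to a homeomorphism of Thom spaces.

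First, I would observe that the composition map of $R_V$ lies over the composition map of $\Delta = R_0$; this is immediate from inspecting the formula for $\cdot_i$ in Definition~\ref{def:P2}. Thus $\circ_i$ is a map of bundles over the homeomorphism $\Delta(I) \times \Delta(J) \isom \Delta(I \cup_i J)$. Second, I would verify that on each fibre the map $(x,y) \mapsto x +_i ty$ is \emph{linear} in the pair $(x,y)$: indeed, componentwise one gets $x_j$ for $j \in I-\{i\}$ and $x_i + t_i y_j$ for $j \in J$, both of which are linear in $(x,y)$ (with $t$ and $u$ held fixed). Since $\circ_i$ is a homeomorphism and respects the projection to $\Delta$, this fibrewise linear map must be a linear isomorphism on each fibre.

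Consequently $\circ_i$ is a fibrewise linear isomorphism of trivial vector bundles over the homeomorphism $\Delta(I)\times \Delta(J) \isom \Delta(I \cup_i J)$. Such a map extends canonically to a homeomorphism of fibrewise one-point compactifications that sends the section at infinity to the section at infinity. Collapsing those sections produces a homeomorphism of Thom spaces
\[ S_V(I) \wedge S_V(J) \isom S_V(I \cup_i J), \]
where the smash product on the left arises because the basepoints of the two Thom spaces (i.e.\ the collapsed infinity sections) are sent to the basepoint on the right, so the map factors through the smash rather than the product. Under the identifications $S_V(I) = S^{V^I/V} \wedge \Delta(I)_+$ and similarly for $J$, this is the expected combination of the sphere isomorphism $S^{V^I/V} \wedge S^{V^J/V} \isom S^{V^{I\cup_i J}/V}$ (twisted by the fibrewise linear data) with the simplex-operad homeomorphism on the $\Delta_+$ factors.

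Finally, I would verify the (co)operad axioms and equivariance. The associativity and naturality diagrams for $S_V$ follow formally from those of $R_V$ and $\Delta$ by one-point compactifying fibrewise, since passing to fibrewise Thom spaces is functorial on bundle isomorphisms covering base homeomorphisms. The $GL(V)$-action on $R_V$ is fibrewise linear over the trivial action on $\Delta$, hence it extends to $S_V$, and the structure maps above are $GL(V)$-equivariant because $\circ_i$ already is. The only step that requires any care is making the fibrewise compactification argument precise — in particular, checking that collapsing the infinity section of the source corresponds exactly to smashing the two pointed Thom spaces — but this follows from the triviality of the bundles in Proposition~\ref{prop:R-bundle}, which lets us identify everything explicitly as $S^{V^I/V} \wedge \Delta(I)_+$.
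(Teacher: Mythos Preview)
Your proposal is correct and follows essentially the same approach as the paper: the paper's proof simply states that the desired maps are the homeomorphisms of Thom spaces induced by the vector bundle isomorphism $R_V(I) \times R_V(J) \isom R_V(I \cup_i J)$ over $\Delta(I) \times \Delta(J) \isom \Delta(I \cup_i J)$. You have spelled out the same argument in more detail, verifying fibrewise linearity and explaining why the Thom space of the product bundle is the smash product of the Thom spaces.
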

\begin{proof}
These are the homeomorphisms of Thom spaces induced by the isomorphism of vector bundles (over $\Delta(I) \times \Delta(J) \isom \Delta(I \cup_i J)$) of the form
\[ R_V(I) \times R_V(J) \isom R_V(I \cup_i J); \quad (x,t),(y,u) \mapsto (x+_i ty,t \cdot_i u). \qedhere \]
\end{proof}

\begin{remark}
The $V$-sphere (co)operad $S_V$ is our analogue of the `sphere (co)operads' of Arone-Kankaanrinta~\cite{arone/kankaanrinta:2014}. The objects described there are operads/cooperads whose terms are actual spheres unlike our more complicated fibrewise constructions. The advantages in this paper of the fibrewise version $S_V$ are its closer connection to the structure of the little $V$-disc operad and the $GL(V)$-action inherited from the action on $V$. We will use $S_V$ to define our version of `operadic suspension' in Section~\ref{sec:susp} below, where we discuss the relationship with the work of Arone and Kankaanrinta in more detail.
\end{remark}

\begin{notation}
It will be convenient to write
\[ x_J := \frac{\sum_{j \in J} t_j x_j}{\sum_{j \in J} t_j} \]
for the \emph{weighted barycentre} of a collection of vectors $(x_j)_{j \in J}$ with respect to a sequence of \emph{weights} $(t_j)_{j \in J}$, and
\[ t_J := \sum_{j \in J} t_j \]
for the \emph{combined weight} of such a sequence.
\end{notation}

We close this section by providing some useful formulas that tell us how the operations of taking weighted barycentres and combined weights interact with the structure maps in the barycentre and simplex quasi-operads.

\begin{lemma} \label{lem:btcalc}
Consider a point $(x,t) \in R_V(I \cup_i J)$. For each subset $K \subseteq I$ that does not contain $i$, we have
\[ (x/J)_K = x_K, \quad (t/J)_{K} = t_K, \]
and for each subset $K \subseteq I$ that does contain $i$, we have
\[ (x/J)_K = x_{K \cup_i J}, \quad (t/J)_{K} = t_{K \cup_i J}. \]
For each subset $K \subseteq J$, we have
\[ (x|J)_K = t_J^{-1}(x_K - x_J), \quad (t|J)_K = t_J^{-1}t_K. \]
\end{lemma}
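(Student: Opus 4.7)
The plan is to verify each formula by a direct substitution, expanding the abbreviations $x_K = (\sum_{k \in K} t_k x_k)/(\sum_{k \in K} t_k)$ and $t_K = \sum_{k \in K} t_k$, and plugging in the explicit definitions of $(x/J, t/J)$ and $(x|J, t|J)$ given in the proof of Proposition~\ref{prop:R}. The calculation splits naturally into the three cases stated in the lemma, and no new geometric idea is required; it is really a bookkeeping exercise about reindexing sums.

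First I would dispatch the easy case $K \subseteq I$ with $i \notin K$. Here the restriction formulas give $(x/J)_k = x_k$ and $(t/J)_k = t_k$ for every $k \in K$ individually, so the sums defining $(t/J)_K$ and $(x/J)_K$ coincide termwise with those defining $t_K$ and $x_K$. Next, for $K \subseteq I$ with $i \in K$, the only index that behaves nontrivially is $i$ itself, where by definition $(t/J)_i = t_J$ and $(x/J)_i = x_J = t_J^{-1} \sum_{j \in J} t_j x_j$. Consequently $(t/J)_i (x/J)_i = \sum_{j \in J} t_j x_j$, so when we form the weighted barycentre over $K$ the contribution from index $i$ is exactly what would have come from summing the original weights $t_j$ and products $t_j x_j$ over $j \in J$. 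Writing $K \cup_i J = (K \setminus \{i\}) \amalg J$, the two sums visibly reassemble into $t_{K \cup_i J}$ and $\sum_{k \in K \cup_i J} t_k x_k$, and dividing yields $x_{K \cup_i J}$.

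Finally, for $K \subseteq J$, the corestriction formulas give $(t|J)_k = t_k/t_J$ and $(x|J)_k = (x_k - x_J)/t_J$. Summing the first over $k \in K$ yields $(t|J)_K = t_K/t_J$ immediately. For the barycentre, the key observation is that
\[ \sum_{k \in K} t_k (x_k - x_J) \;=\; t_K x_K - t_K x_J \;=\; t_K (x_K - x_J), \]
so after dividing by $(t|J)_K = t_K/t_J$ and clearing the factor of $t_J^{-1}$ from $(x|J)_k$, the $t_K$ cancels and we obtain $(x|J)_K = t_J^{-1}(x_K - x_J)$, as claimed.

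There is no real obstacle in the argument; the only thing to watch is the coherent placement of the normalization factors $t_J^{-1}$ in the $(x|J,t|J)$ case, where both numerator and denominator of the weighted barycentre carry such factors and one must confirm they cancel to leave precisely one overall $t_J^{-1}$. Because all three claims are purely algebraic manipulations of the formulas already written down in Proposition~\ref{prop:R}, a short paragraph-style proof, rather than an elaborate argument, suffices.
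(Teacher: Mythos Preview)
Your proposal is correct and follows essentially the same approach as the paper's own proof: a direct case-by-case verification by expanding the definitions of $(x/J,t/J)$ and $(x|J,t|J)$ and reindexing the sums. The paper's argument is organized identically, with the same observation that only the index $i$ contributes nontrivially in the second case and the same cancellation of $t_K$ in the third.
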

\begin{proof}
The results in the first line follow immediately from the definitions of $x/J$ and $t/J$. So suppose $i \in K \subseteq I$. Then we have
\[ (t/J)_K = \sum_{j \in K - \{i\}} t_j + (t/J)_i = \sum_{j \in K - \{i\}} t_j + t_J = \sum_{j \in K \cup_i J} t_j = t_{K \cup_i J} \]
and
\[ (x/J)_K = \frac{\sum_{j \in K - \{i\}} t_jx_j + t_Jx_J}{(t/J)_K} = \frac{\sum_{j \in K - \{i\}} t_jx_j + \sum_{j \in J} t_jx_j}{t_{K \cup_i J}} = x_{K \cup_i J}. \]
For $K \subseteq J$, we have
\[ (t|J)_K = \sum_{j \in K} (t|J)_j = \sum_{j \in K} t_J^{-1}t_j = t_J^{-1}t_K \]
and
\[ \begin{split} (x|J)_K &= \frac{\sum_{j \in K} (t|J)_j(x|J)_j}{(t|J)_K} = \frac{\sum_{j \in K} t_J^{-1}t_jt_J^{-1}(x_j - x_J)}{t_J^{-1}t_K} \\ &= \frac{t_J^{-1}((\sum_{j \in K} t_jx_j) - t_Kx_J)}{t_K} = t_J^{-1}(x_K - x_J). \end{split} \qedhere \]
\end{proof}

\section{The restricted little-disc operad} \label{sec:restricted}

A key component of the construction of our duality map will be the ability to embed an $E_n$-operad into the barycentre operad $R_V$ (where $n = \dim V$). For this purpose, we now construct a version of the little discs operad $E_V$ that includes the barycentre and simplex conditions we used to define the barycentre (co)operad $R_V$.

\begin{definition} \label{def:D}
Let $V$ be a finite-dimensional real normed vector space. The \emph{restricted little $V$-disc operad} is the suboperad $D_V$ of $E_V$ given by
\[ D_V(I) := R_V(I) \cap E_V(I) = \{ (x,t) \in E_V(I) \; | \; \sum_i t_i = 1, \; \sum_i t_ix_i = 0 \}. \]
In other words, $D_V$ is the operad consisting of those collections of non-overlapping discs inside the unit disc where the sum of the radii of the discs is equal to $1$, and the centres of the discs have weighted barycentre the origin. (It is straightforward to check that these conditions are preserved under the composition maps for the little disc operad $E_V$; that is the content of Proposition~\ref{prop:R}.)
\end{definition}

\begin{remark}
Since $D_V$ is also a suboperad of the (co)operad $R_V$, its composition maps are necessarily injective. In fact, the composition map
\[ D_V(I) \times D_V(J) \to D_V(I \cup_i J) \]
is the embedding of a closed subspace. (We will prove a more general embedding statement in Lemma~\ref{lem:DV}.)
\end{remark}

The main result of this section is that this restricted version of the little disc operad retains the same homotopy type.

\begin{theorem} \label{thm:D}
For a finite-dimensional real normed vector space $V$, the inclusion
\[ D_V \into E_V \]
is an equivalence of operads.
\end{theorem}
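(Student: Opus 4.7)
My plan is to show that for each finite set $I$ with $|I| \geq 2$ the inclusion $D_V(I) \hookrightarrow E_V(I)$ is a weak homotopy equivalence; since both $D_V$ and $E_V$ are reduced operads, this levelwise equivalence gives the claimed equivalence of operads. Following the geometric strategy indicated in the introduction, I would factor this inclusion through a larger auxiliary space $\tilde E_V(I)$ that (a) is easily shown to be homotopy equivalent to $E_V(I)$, and (b) admits an explicit deformation retraction onto $D_V(I)$.

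The natural candidate for the auxiliary space is
\[ \tilde E_V(I) := \left\{(x,t) \in V^I \times (0,\infty)^I \;\middle|\; \textstyle \sum_i t_i = 1,\ \sum_i t_i x_i = 0,\ |x_i - x_j| \geq t_i + t_j\right\}, \]
namely the subspace of $R_V(I)$ consisting of disjoint disc configurations, but with the unit-disc containment condition of $E_V$ dropped. Plainly $D_V(I) \subseteq \tilde E_V(I)$. To prove $\tilde E_V(I) \simeq E_V(I)$, I work inside the ambient space $\mathcal{D}_V(I)$ of all disjoint disc configurations in $V$, with no size bound. The contractible group $V \rtimes \R^+$ acts freely on $\mathcal{D}_V(I)$ via $(v,\lambda)\cdot(x,t) = (\lambda x + v, \lambda t)$, and the two normalization equations defining $\tilde E_V(I)$ cut out a unique orbit representative, so $\tilde E_V(I)$ is a global slice. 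Hence $\tilde E_V(I) \cong \mathcal{D}_V(I)/(V \rtimes \R^+) \simeq \mathcal{D}_V(I)$, the last equivalence because the acting group is contractible. Combined with the standard equivalence $\mathcal{D}_V(I) \simeq \mathrm{Conf}_I(V) \simeq E_V(I)$ via the forgetful map $(x,t) \mapsto x$ (whose fibres are convex, hence contractible), this yields $\tilde E_V(I) \simeq E_V(I)$.

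The principal obstacle is constructing the deformation retraction of $\tilde E_V(I)$ onto $D_V(I)$. Given $(x,t) \in \tilde E_V(I)$, one must continuously deform the configuration to land inside the unit-disc region $|x_i| + t_i \leq 1$, while preserving $\sum_i t_i = 1$, $\sum_i t_i x_i = 0$, and pairwise disjointness at every intermediate time. Naive rescalings fail: shrinking centres uniformly destroys disjointness, while rescaling both coordinates of $(x,t)$ destroys the normalization $\sum_i t_i = 1$. A workable strategy is to exploit the convex, contractible set
\[ F(x,t) := \{(c,r) \in V \times (0,\infty) : |x_i - c| \leq r - t_i \text{ for all } i \in I\} \]
of admissible enclosing balls, which varies continuously with $(x,t)$. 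From a canonical choice of point in $F(x,t)$, such as the Chebyshev centre of the configuration, one builds a one-parameter family of modifications of $(x,t)$ that simultaneously translates, rescales and perturbs the configuration in a way compatible with the two normalizations. The technical heart of the proof is verifying that this family stays in $\tilde E_V(I)$ throughout -- i.e. that no constraint is violated at any intermediate time -- and that at $s = 1$ it lies in $D_V(I)$ and restricts to the identity on $D_V(I) \subseteq \tilde E_V(I)$.
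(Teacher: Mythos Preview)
Your overall architecture matches the paper exactly: the paper also introduces your auxiliary space (there denoted $RD_V(I)$), reduces everything to the configuration space $C_V(I)$ via the forgetful map $(x,t)\mapsto x$, and then shows that $D_V(I)\subseteq RD_V(I)$ is a deformation retract. Your argument that $\tilde E_V(I)\simeq E_V(I)$ via the $V\rtimes\R^+$-slice is a fine alternative to the paper's explicit homotopy inverse $x\mapsto(u_I^{-1}(x-x_I),u_I^{-1}u)$ for $RD_V(I)\to C_V(I)$; both are short.

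The gap is in the deformation retraction itself, which is where all the content lies. Your Chebyshev-centre proposal does not lead anywhere concrete: knowing the smallest enclosing ball $(c,r)$ of the configuration does not tell you how to shrink the configuration into the unit disc while simultaneously preserving $\sum_i t_i=1$, $\sum_i t_ix_i=0$, and pairwise disjointness. Any global translation breaks the barycentre constraint; any global dilation breaks $\sum_i t_i=1$; and you have not specified what ``perturbation'' would repair both without creating overlaps. You acknowledge this is the technical heart but do not supply the family.

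The paper's construction is substantially more delicate than your sketch suggests. It keeps $t$ fixed throughout and moves only $x$, via the flow of a smooth vector field on $RD_V(I)^{>1}=\{r(x,t)>1\}$ where $r(x,t)=\max_i(|x_i|+t_i)$. The basic idea is to move each \emph{connected component} of the disc configuration rigidly toward the origin (so that discs within a component cannot collide), but the resulting ``piecewise'' vector field is discontinuous when components merge. The paper smooths it with a partition of unity subordinate to neighbourhoods on which the component structure only refines, proves that $r$ still strictly decreases at a rate bounded below by $\hat t=\min_i t_i$ along every integral curve (using a convexity lemma for non-smooth decrease rates), bounds the vector field so that integral curves extend to the boundary $\{r=1\}$ in finite time, and finally checks that the flow preserves $RD_V(I)$ (once two discs touch they lie in the same component for all relevant partitions, hence move together). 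None of this is visible from a Chebyshev-centre picture, and something of this nature is genuinely needed: the difficulty is precisely that far-apart clusters must be moved independently while nearby discs must be moved coherently.
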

\begin{proof}
It is well-known, for example by \cite{may:1972}, that, for each finite set $I$, there is a homotopy equivalence
\[ p: E_V(I) \to C_V(I); \quad (x,t) \mapsto x, \]
where $C_V(I)$ is the configuration space of $I$-tuples of distinct points in $V$. It is therefore sufficient to show that the restriction of $p$ to $D_V(I)$ is also an equivalence, which we show in the following proposition.
\end{proof}

\begin{proposition} \label{prop:D}
For each finite set $I$, the map $p: D_V(I) \to C_V(I)$, given by $p(x,t) = x$, is an equivalence. Moreover, for each $t \in \Delta(I)$, the restriction of $p$ to $D_V(I)_t$, that is, the fibre over $t$ of the projection $D_V(I) \to \Delta(I)$, is also an equivalence.
\end{proposition}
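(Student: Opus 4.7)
The plan is to exhibit, for each $t \in \Delta(I)$, a strong deformation retraction onto $D_V(I)_t$ of the larger slice
\[ N(I, t) := \bigl\{y \in V^I : y_i \text{ pairwise distinct},\; \textstyle\sum_i t_i y_i = 0\bigr\}, \]
with the retraction varying continuously in $t$. Since $V$ acts freely by translation on $C_V(I)$ with $N(I,t)$ as an affine slice, the inclusion $N(I,t) \hookrightarrow C_V(I)$ is already a homotopy equivalence, and $p|_{D_V(I)_t}$ factors through this inclusion. Thus the fiberwise claim reduces to $D_V(I)_t \hookrightarrow N(I,t)$ being a weak equivalence; and the total-space claim follows by assembling the retractions over $t \in \Delta(I)$ and using that $\Delta(I)$ is contractible.

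I would proceed in two stages via the intermediate space
\[ N^\circ(I, t) := \{y \in N(I, t) : |y_i - y_j| \ge t_i + t_j \text{ for all } i \neq j\}. \]
The first stage, $N^\circ(I,t) \hookrightarrow N(I,t)$ as a strong deformation retract, is straightforward: the homotopy $h_s(y) = \bigl((1-s) + s\lambda(y)\bigr) y$ with $\lambda(y) := \max\bigl\{1,\,\max_{i\neq j}(t_i+t_j)/|y_i - y_j|\bigr\}$ preserves the barycentre condition by linearity, is the identity on $N^\circ(I,t)$ (where $\lambda(y) = 1$), and takes every $y$ to a non-overlapping configuration at $s = 1$. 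The second stage is to produce a strong deformation retract $N^\circ(I,t) \to D_V(I)_t$: one needs to pull ``outlier'' points (those with $|y_i| > 1 - t_i$) inward while simultaneously rearranging the others to preserve both the barycentre condition and non-overlap.

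The main obstacle is this second stage. A uniform inward scaling $y \mapsto \mu y$ with $\mu < 1$ fails in general: it shrinks pairwise distances $|y_i - y_j|$ below the threshold $t_i + t_j$ and so breaks non-overlap. For instance, with $V = \mathbb{R}$, $I = \{1,2,3\}$, $t = (0.4, 0.4, 0.2)$, and $y = (1, 0, -2) \in N^\circ(I,t)$, the scaling window $[\lambda_{\min}, \lambda_{\max}]$ for which $\lambda y \in D_V(I)_t$ is empty, so no single rescaling can work. The required deformation must therefore combine an inward radial component with a tangential spreading along the hyperplane $\sum t_i y_i = 0$, chosen continuously in $y$. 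I expect the cooperad decomposition maps of the barycentre operad $R_V$ from Proposition~\ref{prop:R}, $(x, t) \mapsto \bigl((x/J, t/J), (x|J, t|J)\bigr)$, to supply the requisite flexibility: nearly-overlapping clusters can be handled by a rescaling inside their factor $(x|J, t|J)$ while the coarsened configuration $(x/J, t/J)$ is separately contracted toward the origin. Verifying continuity of this flow in both $y$ and $t$, as well as its termination, is the technical heart of the proof.
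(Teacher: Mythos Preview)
Your skeleton matches the paper's almost exactly: your intermediate space $N^\circ(I,t)$ is precisely the fibre over $t$ of the paper's $RD_V(I)$, and the two stages you describe are the same two stages the paper uses. Your Stage~1 retraction by outward scaling is a perfectly good substitute for the paper's explicit homotopy inverse $x \mapsto (u_I^{-1}(x-x_I), u_I^{-1}u)$ with $u_i = \tfrac{1}{2}\min_{i'\neq i}|x_{i'}-x_i|$; both arguments are easy.

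The gap is Stage~2, and you have correctly diagnosed it as the heart of the matter but have not actually carried it out. Your suggestion of using the cooperad decomposition $(x/J,\,x|J)$ to separate a cluster $J$ from the coarsened picture is morally the right idea, but as stated it does not assemble into a flow: the partition into clusters is a discontinuous function of $y$, and you give no indication of how to choose $J$ continuously or how to guarantee that the two pieces of the motion, recombined via the operad structure map, still satisfy the non-overlap constraint along the way. The sentence ``verifying continuity of this flow\ldots is the technical heart of the proof'' is an acknowledgement that the proof is not yet there.

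The paper's resolution is as follows. Set $r(x,t) = \max_i\{|x_i|+t_i\}$, so that $D_V(I)_t = \{r \le 1\}$ inside $RD_V(I)_t$. For each partition $\pi$ of $I$, define the smooth vector field $X^\pi_i = -x_{[i]}$, which translates each block of $\pi$ rigidly toward the origin. At a point $(x,t)$ with $r>1$, take $\pi = \pi(x,t)$ to be the partition into connected components of the ``touching'' relation $|x_i-x_j| \le t_i+t_j$; a short calculation (Lemma~\ref{lem:part}) bounds $|x_i - x_{[i]}| \le t_{[i]} - t_i$, and from this one shows that $r$ decreases at rate strictly greater than $\hat t := \min_i t_i$ along $X^{\pi(x,t)}$. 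The discontinuity of $\pi(x,t)$ is then handled by a partition-of-unity argument: the paper covers $\{r>1\}$ by small neighbourhoods $W_{(x,t)}$ on which the partition can only coarsen, and averages the $X^\pi$ using a subordinate partition of unity. Convex combinations preserve the rate bound (Lemma~\ref{lem:cone}), so the resulting smooth field $X$ still drives $r$ down at rate $>\hat t$; moreover, once two discs touch, every $X^\pi$ contributing at that point has them in the same block, so the flow never creates overlap. The integral curves therefore exit $\{r>1\}$ in finite time bounded by $(r-1)/\hat t$, and a careful continuity argument at the boundary $\{r=1\}$ yields the deformation retraction.

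In short: your outline is correct and your intuition about clusters is the right one, but the missing ingredient is the smoothing of the cluster-based flow via a partition of unity over partitions, together with the quantitative rate estimate that forces termination.
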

\begin{proof}
Consider the following subspace of the barycentre space $R_V(I)$:
\[ RD_V(I) := \{ (x,t) \in R_V(I) \; | \; |x_i - x_j| \geq t_i+t_j \}. \]
Thus $RD_V(I)$ is similar to the restricted little $V$-disc space $D_V(I)$ but without the bounding constraints $|x_i| \leq 1 - t_i$. In other words, $RD_V(I)$ is the space of $I$-tuples of discs in $V$ with disjoint interiors, whose radii sum to $1$, and whose centres satisfy the weighted barycentre condition.

We first claim that the forgetful map $p: RD_V(I) \to C_V(I); (x,t) \mapsto x$ has the following homotopy inverse. Given $x \in C_V(I)$, and $i \in I$, we set
\[ u_i := \frac{1}{2}\min_{i' \neq i}\{|x_{i'} - x_i| \}, \]
and note that $u \in (0,\infty)^I$ depends continuously on $x$. Then define $C_V(I) \to RD_V(I)$ by
\[ x \mapsto (u_I^{-1}(x - x_I), u_I^{-1} u). \]
It is straightforward to check that this is the desired homotopy inverse for $p$. Note that a similar argument shows that the restriction of $p$ to the fibre $RD_V(I)_t$ is also an equivalence.

It is now sufficient to show that the inclusion
\[ D_V(I) \into RD_V(I) \]
is a (fibrewise over $\Delta(I)$) homotopy equivalence. The required result follows from Proposition~\ref{prop:defret} below, but it will take us some time to get there.
\end{proof}

Let $r: R_V(I) \to (0,\infty)$ be the continuous function defined by
\begin{equation} \label{eq:r} r(x,t) := \max_{i \in I} \{|x_i|+t_i\} \end{equation}
We write $R_V(I)^{>1}$ for the set of points in $R_V(I)$ for which $r(x,t) > 1$. Notice that by definition $D_V(I) = RD_V(I) - R_V(I)^{>1}$.

We will produce a deformation retraction of $RD_V(I)$ onto $D_V(I)$ by constructing a smooth vector field $X$ on $R_V(I)^{>1}$ such that $r$ decreases along each integral curve of $X$ at some rate which we can control. The flow for the vector field $X$, restricted to $RD_V(I)$, will then provide the desired deformation.

The basic idea for this flow is as follows. Take $(x,t) \in R_V(I)^{>1}$, representing some configuration of discs in $V$ with weighted barycentre the origin. We move each connected component of this configuration rigidly towards the origin. When two or more components touch, they `stick' together and the resulting component is then moved rigidly. We will show that moving along this flow strictly decreases the function $r$ of (\ref{eq:r}) at a suitable rate.

Unfortunately, the flow described in the previous paragraph is not continuous. We therefore use a partition of unity argument to smooth out the underlying vector field. Intuitively, this means that two connected components of a given configuration start to influence one another once they get close, and before they actually touch.

Our approach is inspired by that of Baryshnikov, Bubenik and Kahle in \cite{baryshnikov/bubenik/kahle:2014} who prove, among other things, that the space of $k$ little discs of radius $\rho$ inside the unit disc has the same homotopy type as the configuration space when $\rho < 1/k$. Our situation does not fit their exact framework because: (i) we want to consider the limiting case where the sum of the radii of the discs equals $1$; (ii) we want to treat discs of varying radii, which makes the spaces we are working with noncompact; and (iii) we wish to include cases such as for the little cubes version of our result, where the norm involved, the $\ell^\infty$-norm, is not smooth. Our approach is therefore more ad hoc though in spirit follows section 3 of \cite{baryshnikov/bubenik/kahle:2014}.

We start by introducing notation for the connected components of a configuration of discs, and we prove a key bound that is central to our calculations.

\begin{definition}
We say that a subset $J \subseteq I$ is \emph{$(x,t)$-connected} if for any $j,j' \in J$ there is a sequence
\[ j = j_0, \dots, j_r = j' \]
in $J$ such that $|x_{j_u} - x_{j_{u+1}}| \leq t_{j_u} + t_{j_{u+1}}$ for $u = 0,\dots,r-1$. In other words, the union of the discs in the configuration $(x,t)$ that correspond to elements of $J$ is a connected subset of $V$. The maximal $(x,t)$-connected subsets of $I$ form a partition which we denote $\pi(x,t)$.
\end{definition}

\begin{lemma} \label{lem:part}
Take $(x,t) \in R_V(I)$, let $J$ be an $(x,t)$-connected set, and suppose $j \in J$. Then we have
\[ |x_j - x_{J}| \leq t_{J} - t_j. \]
In particular, if $\pi(x,t)$ is the indiscrete partition $\{I\}$, then $r(x,t) \leq 1$.
\end{lemma}
\begin{proof}
We use induction on the number of elements of $J$. If $J$ has a single element, the claim is immediate. For the induction step, suppose $J$ has more then one element, and consider the maximal $(x,t)$-connected subsets of $J - \{j\}$ which we denote $J_1,\dots,J_k$.

For each such subset $J_i$, there is some $j_i \in J_i$ such that
\[ |x_j - x_{j_i}| \leq t_j + t_{j_i}. \]
Applying the induction hypothesis to $J_i$ tells us that
\[ |x_{j_i} - x_{J_i}| \leq t_{J_i} - t_{j_i} \]
and so
\[ |x_j - x_{J_i}| \leq t_{J_i} + t_j. \]
The point $x_{J}$ is the weighted barycentre of $x_j,x_{J_1},\dots,x_{J_k}$ and so we have
\[ t_J(x_j - x_J) = t_J x_j - t_j x_j - t_{J_1}x_{J_1} - \dots - t_{J_k}x_{J_k} = t_{J_1}(x_j - x_{J_1}) + \dots + t_{J_k}(x_j - x_{J_k}). \]
Therefore
\[ |x_j - x_J| \leq \frac{t_{J_1}(t_{J_1}+t_j) + \dots + t_{J_k}(t_{J_k}+t_j)}{t_J} \leq \frac{t_{J_1}t_J + \dots + t_{J_k}t_J}{t_J} = t_J - t_j \]
as required.

The final claim follows from observing that $x_I = 0$ and $t_I = 1$, so that the lemma gives $|x_i| \leq 1-t_i$.
\end{proof}

We now define a vector field whose flow moves each connected component in a configuration of discs rigidly towards the origin.

\begin{definition}
For each partition $\pi$ of $I$, we define a smooth vector field $X^{\pi}$ on $R_V(I)$ by
\[ X^{\pi}_i := -x_{[i]_{\pi}} \]
where $[i]_{\pi}$ is the piece of the partition $\pi$ that contains $i \in I$, and with the components of $X^{\pi}$ in the $t$-direction all equal to $0$. We have
\[ \sum_{i \in I} t_i X^{\pi}_i = \sum_{i \in I} -t_i x_{[i]_{\pi}} = \sum_{i \in I} \sum_{j \in [i]_{\pi}} -t_i \frac{t_j x_j}{t_{[i]_{\pi}}} = -\sum_{j \in I} t_j x_j \sum_{i \in [j]_{\pi}} \frac{t_i}{t_{[j]_{\pi}}} = -\sum_{j \in I} t_j x_j = 0, \]
by the barycentre condition, since for $i \in [j]_{\pi}$ we have $[i]_{\pi} = [j]_{\pi}$. Therefore $X^{\pi}$ is indeed a vector field on $R_V(I)$.
\end{definition}

To splice the vector fields $X^{\pi}$ together, we require a suitable partition of unity.

\begin{definition} \label{def:Upi}
For each partition $\pi$ of $I$, let $U_{\pi}$ be the open subset of $R_V(I)^{>1}$ consisting of those points $(x,t)$ such that
\begin{itemize}
    \item the partition $\pi(x,t)$ is a refinement of (or equal to) $\pi$, i.e.\ we have $[i]_{\pi(x,t)} \subseteq [i]_{\pi}$ for every $i$, and
    \item $\displaystyle |x_i - x_{[i]_{\pi}}| < \frac{r(x,t)+1}{2} - \hat{t} - t_i$ for all $i \in I$.
\end{itemize}
where $\hat{t} = \min_{i \in I}\{t_i\}$.

The idea is that $U_\pi$ consists of those configurations of discs that are `close' to being connected in the pattern of the partition $\pi$. In particular, the second condition implies that the discs corresponding to one piece of that partition cannot be too widely separated relative to the size of the whole configuration. The reason for the specific formula appearing above will become apparent in the proof of Lemma~\ref{lem:curve}.
\end{definition}

\begin{lemma} \label{lem:Upi}
For $(x,t) \in R_V(I)^{>1}$, if $\pi(x,t) = \pi$, then $(x,t) \in U_{\pi}$. Therefore
\[ \mathcal{U} = \{ U_{\pi} \; | \; \pi \neq \{I\} \} \]
is an open cover of $R_V(I)^{>1}$.
\end{lemma}
\begin{proof}
Since $r(x,t) > 1$, we have $\pi(x,t) \neq \{I\}$ by Lemma~\ref{lem:part}. For each $i \in I$, we therefore have $[i]_{\pi} \neq I$, so that $t_{[i]_{\pi}} \leq 1 - \hat{t}$. Also, $[i]_{\pi}$ is $(x,t)$-connected, so Lemma~\ref{lem:part} also gives
\[ |x_i - x_{[i]_{\pi}}| \leq t_{[i]_{\pi}} - t_i \leq (1 - \hat{t}) - t_i < \frac{r(x,t)+1}{2} - \hat{t} - t_i. \]
Thus $(x,t) \in U_{\pi}$.
\end{proof}

\begin{definition}
Let $\{\rho_{\pi} \; | \; \pi \neq \{I\}\}$ be a smooth partition of unity subordinate to the open cover $\mathcal{U}$, so that $\mathrm{supp}(\rho_{\pi}) \subseteq U_{\pi}$ for all $\pi$. Define a smooth vector field $X$ on $R_V(I)^{>1}$ by
\[ X = \sum_{\pi} \rho_{\pi}X^{\pi} \]
i.e.\ $X_i = -\sum \rho_{\pi} x_{[i]_{\pi}}$, with the $t$-components of $X$ all equal to $0$.
\end{definition}

We now claim that the integral curves for the vector field $X$ extend in the following way.

\begin{lemma} \label{lem:curve}
Let $\gamma: [0,b) \to R_V(I)^{>1}$ be a maximal integral curve for $X$ on $R_V(I)^{>1}$. Then $b < \infty$, and $\gamma$ extends continuously to a curve
\[ \gamma: [0,\infty) \to R_V(I) \]
by setting
\[ \gamma(s') = \lim_{s \to b} \gamma(s) \]
for all $s' \geq b$. We then also have $r(\gamma(s')) = 1$ for all $s' \geq b$.
\end{lemma}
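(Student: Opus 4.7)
The plan is to leverage the two conclusions of Lemma~\ref{lem:decrX} in tandem: the decrease estimate will force $b < \infty$ and will pin down the exit value of $r$, while the norm bound $\|X\| < r$ will give the Lipschitz control needed to extend $\gamma$ continuously to $s = b$.

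First I would observe that $X$ has vanishing $t$-component by construction, so $t$ is constant along $\gamma$; in particular, $\hat t = \min_{i \in I} t_i$ is a fixed positive constant along the whole curve. Applying Lemma~\ref{lem:decrX} pointwise and integrating (using that $r \circ \gamma$ is continuous and decreases at rate $> \hat t$ at every point), I would conclude
\[ r(\gamma(s)) \le r(\gamma(0)) - \hat t \, s \qquad \text{for all } s \in [0,b). \]
Since $\gamma$ is required to stay in $R_V(I)^{>1}$, i.e.\ $r(\gamma(s)) > 1$, this immediately yields $b \le (r(\gamma(0)) - 1)/\hat t < \infty$.

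Next I would show the limit $\lim_{s \to b} \gamma(s)$ exists in $R_V(I)_t$. The second bullet of Lemma~\ref{lem:decrX} gives $\|X(\gamma(s))\| < r(\gamma(s)) \le r(\gamma(0))$, so for $0 \le s < s' < b$,
\[ \|\gamma(s') - \gamma(s)\| \le \int_s^{s'} \|X(\gamma(\tau))\|\, d\tau \le r(\gamma(0))\,(s'-s). \]
Thus $\gamma$ is Lipschitz on $[0,b)$ and extends continuously to $[0,b]$; equivalently, the image $\gamma([0,b))$ lies inside the closed ball $\{r \le r(\gamma(0))\}$ in the fibre $R_V(I)_t$, which is compact (since in a max-type norm $r \le r_0$ bounds each $|x_i|$), so a limit point exists and uniqueness follows from the Lipschitz estimate.

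Finally I would fix the exit value. By maximality, $\gamma(b) := \lim_{s\to b}\gamma(s)$ cannot lie in the open set $R_V(I)^{>1}$, so $r(\gamma(b)) \le 1$. Conversely, $r(\gamma(s)) > 1$ for all $s < b$ forces $r(\gamma(b)) \ge 1$ by continuity of $r$. Hence $r(\gamma(b)) = 1$, and the constant extension $\gamma(s') := \gamma(b)$ for $s' \ge b$ satisfies $r(\gamma(s')) = 1$ as required. The main subtlety I anticipate is verifying the Lipschitz estimate rigorously given that $X$ is only defined on the open set $R_V(I)^{>1}$ and $r$ is not smooth; but both issues are handled by the uniform-in-$s$ bound $\|X(\gamma(s))\| < r(\gamma(0))$ on the integral curve, together with the fact that the decrease rate $\hat t$ from Lemma~\ref{lem:decrX} is constant along $\gamma$, which is what rules out a blow-up in finite time and forces the interval of definition to close at a boundary point of $\{r = 1\}$.
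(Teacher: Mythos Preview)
Your argument is correct and follows essentially the same line as the paper's proof: the decrease estimate from Lemma~\ref{lem:decrX} bounds $b$ in terms of $(r(\gamma(0))-1)/\hat t$, the norm bound $\|X\|<r(\gamma(0))$ gives uniform (indeed Lipschitz) continuity so the limit at $b$ exists, and maximality together with continuity of $r$ forces $r(\gamma(b))=1$. Your version is slightly more explicit (noting that $t$ is constant along $\gamma$, writing out the Lipschitz estimate), but there is no substantive difference.
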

\begin{proof}
First note that the parameter $t \in \Delta(I)$ is constant along the curve $\gamma$ since $X$ has zero components in that direction. We now claim that for all $s \in [0,b)$ we have, for sufficiently small $h \geq 0$:
\[ r(\gamma(s+h)) \leq r(\gamma(s)) - h\hat{t}. \]
In other words, we prove that the rate at which the function $r$ decreases along the curve $\gamma$ is at least the quantity $\hat{t} = \min_{i \in I}\{t_i\} > 0$.

To see this claim, write $\gamma(s) = (x,t)$, and suppose first that $i \in I$ is such that $r(\gamma(s)) = r(x,t) = |x_i|+t_i$.

The curve $\gamma$ is differentiable at $(x,t)$ with derivative $X(x,t)$, and so by Taylor's Theorem, we have
\[ \gamma(s+h)_i = x_i + hX_i(x,t) + hY(h) \]
where $\lim_{h \to 0} Y(h) = 0$. Therefore, for $0 < h < 1$:
\[ \begin{split} |\gamma(s+h)_i| &= |x_i - \sum_{\pi} \rho_{\pi}hx_{[i]_{\pi}} + hY(h)| \\
    &= | \sum_{\pi} \rho_{\pi} h(x_i - x_{[i]_{\pi}}) + (1-h)x_i + hY(h)| \\
    &\leq \sum_{\pi} \rho_{\pi} h|x_i - x_{[i]_{\pi}}| + (1-h)|x_i| + h|Y(h)| \\
\end{split} \]
Since $\rho_{\pi}$ is supported on $U_{\pi}$, each non-zero term in the sum satisfies
\[ |x_i - x_{[i]_{\pi}}| < \frac{r(x,t)+1}{2} - \hat{t} - t_i. \]
Therefore:
\[ \begin{split}
    |\gamma(s+h)_i| &\leq \sum_{\pi} \rho_{\pi} h\left(\frac{r(x,t)+1}{2} - \hat{t} - t_i \right) + (1-h)|x_i| + h|Y(h)| \\
    &= |x_i| + h\left(\frac{r(x,t)+1}{2} - \hat{t} - t_i - |x_i| + |Y(h)| \right) \\
    &= |x_i| - h\left(\frac{r(x,t)-1}{2} +\hat{t} - |Y(h)| \right).
\end{split} \]
Since $\lim_{h \to 0} |Y(h)| = 0$, and $r(x,t) > 1$, we have, for sufficiently small $h > 0$:
\[ |\gamma(s+h)_i| \leq |x_i| - h\hat{t} = r(\gamma(s)) - t_i - h\hat{t}. \]
Now suppose $i \in I$ is such that $|x_i| + t_i < r(x,t)$. Then, by continuity, we also have, for sufficiently small $h > 0$: $|\gamma(s+h)_i| + t_i \leq r(x,t) - h\hat{t}$ and so also
\[ |\gamma(s+h)_i| \leq r(\gamma(s)) - t_i - h\hat{t}. \]
Putting these inequalities together for all $i \in I$, we get, for sufficiently small $h > 0$:
\[ r(\gamma(s+h)) \leq r(\gamma(s)) - h\hat{t} \]
as required.

Next we deduce that
\[ r(\gamma(s)) \leq r(\gamma(0)) - s\hat{t} \]
for all $s \in [0,b)$: consider the subset of $[0,b)$ consisting of those $s'$ such that the above inequality holds for all $0 \leq s \leq s'$. That subset is closed by continuity of $r$ and $\gamma$, open by the previous result, and nonempty because it contains $0$, so must equal $[0,b)$.

It now follows that $b \leq \frac{r(\gamma(0)) - 1}{\hat{t}}$, so $b$ is finite as claimed.

Since the derivative of $\gamma$ is bounded:
\[ |\gamma'_i| = |X_i| \leq \sum_{\pi} \rho_{\pi} |x_{[i]_{\pi}}| \leq \max_{j \in I}\{|x_j|\} \leq r(\gamma(0)), \]
we deduce that the limit
\[ \gamma(b) := \lim_{s \to b} \gamma(s) \]
exists in $R_V(I)$. Finally, if we had $r(\gamma(b)) > 1$, then the integral curve $\gamma: [0,b) \to R_V(I)^{>1}$ would not be maximal, so $r(\gamma(b)) = 1$. Therefore, defining $\gamma(s') = \gamma(b)$ for all $s' \geq b$ satisfies the required conditions.
\end{proof}

\begin{definition}
Define $\Phi: R_V(I) \times [0,\infty) \to R_V(I)$ as follows:
\begin{itemize}
    \item If $r(x,t) > 1$, then let $\Phi((x,t),-)$ be the integral curve for $X$, extended as in Lemma~\ref{lem:curve}, that starts at $(x,t)$.
    \item If $r(x,t) \leq 1$, we take $\Phi((x,t),s) = (x,t)$ for all $s \in [0,\infty)$.
\end{itemize}
\end{definition}

\begin{proposition}
The function $\Phi$ is continuous and restricts to a function
\[ RD_V(I) \times [0,\infty) \to RD_V(I). \]
\end{proposition}
\begin{proof}
To show that $\Phi$ is continuous, suppose $\Phi((x,t),s) = (y,t)$. If $r(y,t) > 1$, then also $r(x,t) > 1$, and continuity at $((x,t),s)$ follows from that of the maximal flow associated to the smooth vector field $X$. If $r(y,t) < 1$, then $x = y$ and continuity is clear. So suppose $r(y,t) = 1$.

Take a neighbourhood of $(y,t)$ which we may assume contains an open set of the form
\[ Y_{\epsilon} := \{ (y',t') \in R_V(I) \; | \; |y'_i - y_i| < \epsilon, \quad |t'_i - t_i| < \epsilon, \text{ for all $i \in I$} \}. \]
for some $\epsilon$ with $0 < \epsilon < \hat{t}$. Our goal is to find a neighbourhood $U$ of $((x,t),s)$ such that $\Phi(U) \subseteq Y_{\epsilon}$.

\underline{Case 1}: $r(x,t) > 1$. 

Let us write $R_V(I)^{(a,b)}$ for set of points $(y',t') \in R_V(I)$ such that $a < r(y',t') < b$. Since the integral curve starting at $(x,t)$ is continuous and passes through the point $(y,t)$, there is some $s_0 < s$ such that
\[ \Phi((x,t),s_0) \in Y_{\epsilon/2} \cap R_V(I)^{(1,1+\epsilon^2/8)}. \]
By continuity of the flow on $R_V(I)^{>1}$ associated to the smooth vector field $X$, we can find a neighbourhood $U'$ of $((x,t),s_0)$ such that $\Phi(U') \subseteq Y_{\epsilon/2} \cap R_V(I)^{(1,1+\epsilon^2/8)}$.

Now suppose $((x',t'),s') \in U'$. We will show that the integral curve starting at $(y',t') := \Phi((x',t'),s')$ never leaves $Y_{\epsilon}$.

First note that because $|t'_i - t_i| < \epsilon/2 < \hat{t}/2$, we have $\hat{t'} > \hat{t}/2$. We also have $r(y',t') < 1+ \epsilon^2/8$. According to the proof of Lemma~\ref{lem:curve}, along the integral curve starting at $(y',t')$ the function $r$ decreases at rate at least $\hat{t'}$. That curve therefore must leave $R_V(I)^{>1}$ after `time' at most
\[ \frac{\epsilon^2/8}{\hat{t'}} < \frac{\epsilon^2/8}{\hat{t}/2}< \frac{\epsilon}{4}. \]
Moreover, the derivative of that curve is bounded in each coordinate by $r(y',t') < 2$, and so the entire curve can travel a distance at most $\epsilon/2$ in each coordinate. Since $(y',t') \in Y_{\epsilon/2}$, it follows that the entire integral curve is contained in $Y_{\epsilon}$.

Now let $U$ be the subset of $R_V(I) \times [0, \infty)$ given by
\[ U = \{((x',t'),s'') \; | \; ((x',t'),s') \in U' \text{ for some $s' < s''$} \}. \]
Then the above argument implies that $\Phi(U) \subseteq Y_{\epsilon}$. Since we have $((x,t),s_0) \in U'$ and $s_0 < s$, $U$ is a neighbourhood of $((x,t),s)$ as required.

\underline{Case 2}: $r(x,t) = 1$.

In this case, we have $x = y$. We choose
\[ U = (Y_{\epsilon/2} \cap R_V(I)^{(0,1+\epsilon^2/8)}) \times [0,\infty). \]
The argument in Case 1 shows that every integral curve starting at a point in $(Y_{\epsilon/2} \cap R_V(I)^{(1,1+\epsilon^2/8)})$ remains within $Y_{\epsilon}$. Therefore $\Phi(U) \subseteq Y_{\epsilon}$. 

This completes the proof that $\Phi$ is continuous.

Next suppose that $(x,t) \in RD_V(I)$, i.e.\ that $|x_i - x_j| \geq t_i+t_j$ for all $i,j \in I$. Suppose that $(y,t) = \Phi((x,t),s) \notin RD_V(I)$ for some $s$, so that $(y,t)$ is on the integral curve for $X$ starting at $(x,t)$, and there are $i,j \in I$ such that
\[ |y_i - y_j| < t_i+t_j. \]
Without loss of generality, we can assume $(x,t)$ is the last point at which this integral curve leaves the closed subspace of $R_V(I)$ determined by the condition $|x_i - x_j| \geq t_i+t_j$ before reaching $(y,t)$. It then follows that for all points $(x',t) = \Phi((x,t),s')$ with $0 < s' \leq s$, we have
\[ |x'_i - x'_j| < t_i+t_j. \]
It follows also that $i,j$ are in the same piece of the partition $\pi(x',t)$ for all $s' \in [0,s]$.

Now consider the vector field $X$ which defines the flow $\Phi$. We have
\[ X(x',t) = \sum_{\pi} \rho_{\pi} X^{\pi}(x',t). \]
Since $\rho_{\pi}$ is supported on $U_{\pi}$, each non-zero term in this sum has $(x',t) \in U_{\pi}$, and so $\pi(x',t)$ is a refinement (or equal to) $\pi$. It follows that $i,j$ are in the same piece of the partition $\pi$, and so $[i]_{\pi} = [j]_{\pi}$. Therefore $X^{\pi}_i = X^{\pi}_j$ for each non-zero term in the sum above, and so
\[ X_i(x',t) = X_j(x',t). \]
Then $x'_i - x'_j$ is constant along the integral curve which defines $\Phi((x,t),s')$ for $s' \in [0,s]$, but this fact contradicts the changing value of $|x'_i - x'_j|$. Therefore $\Phi((x,t),s) \in RD_V(I)$.
\end{proof}

\begin{proposition} \label{prop:defret}
The function
\[ H: RD_V(I) \times [0,1) \to RD_V(I) \]
given by
\[ H((x,t),u) = \Phi((x,t),-\log(1-u)) \]
extends to a (fibrewise over $\Delta(I)$) deformation retraction of $RD_V(I)$ onto $D_V(I)$.
\end{proposition}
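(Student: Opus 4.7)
The plan is to verify directly from the previously established properties of $\Phi$ that $H$ satisfies the definition of a (fibrewise, strong) deformation retraction. There are five things to check: (i) $H$ is well-defined and continuous as a map $RD_V(I) \times [0,1] \to RD_V(I)$; (ii) $H(-,0)$ is the identity; (iii) $H(-,1)$ lands in $D_V(I)$; (iv) $H$ fixes $D_V(I)$ for all $u$; and (v) $H$ preserves the $t$-coordinate. Items (ii), (iv), and (v) will be immediate: (ii) follows from $-\log(1-0)=0$ and $\Phi((x,t),0)=(x,t)$; (iv) follows from property (1) of $\Phi$, since $r(x,t)\leq 1$ for all $(x,t)\in D_V(I)$; and (v) follows because the vector field $X$ has zero components in the $t$-direction, hence the flow (and its stationary extension) preserves the second coordinate. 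Property~(3) of $\Phi$ ensures $H$ lands in $RD_V(I)$.

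For (iii), I would use properties (1) and (2) of $\Phi$. Given $(x,t)\in RD_V(I)$, property (2) gives some finite $s_0$ with $r(\Phi((x,t),s_0))\leq 1$; property (1) then makes $\Phi((x,t),s)$ constant for $s\geq s_0$. This ensures the limit $\lim_{u\to 1^-}\Phi((x,t),-\log(1-u))$ exists and lies in $RD_V(I) \cap \{r\leq 1\} = D_V(I)$, which is how $H(-,1)$ is defined at $u=1$.

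The main obstacle is (i): continuity at $u=1$. On $RD_V(I)\times [0,1)$, $H$ is continuous because $u \mapsto -\log(1-u)$ is continuous into $[0,\infty)$ and $\Phi$ is continuous by the previous proposition. For a sequence $(x_n,t_n,u_n)\to (x_0,t_0,1)$, I would split into two cases. If $r(x_0,t_0)>1$, write $b_0$ for the existence time of the maximal integral curve at $(x_0,t_0)$; Lemma~\ref{lem:curve} gives $b_0\leq (r(x_0,t_0)-1)/\hat{t_0}$, and by continuity of $r$ and $\hat{t}$ there is $s^*>b_0$ with $b_n<s^*$ and $-\log(1-u_n)\geq s^*$ for all $n$ large. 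Since $\Phi((x,t),s)$ is constant for $s\geq b$, both $H((x_n,t_n),u_n)=\Phi((x_n,t_n),s^*)$ and $H((x_0,t_0),1)=\Phi((x_0,t_0),s^*)$, so convergence follows from continuity of $\Phi$. If $r(x_0,t_0)\leq 1$ then $H((x_0,t_0),1)=(x_0,t_0)$; for nearby $(x_n,t_n)$, Lemma~\ref{lem:curve} bounds the existence time $b_n$ by $(r(x_n,t_n)-1)/\hat{t_n}$, which tends to $0$ (or is negative, meaning we are already in $D_V(I)$), while Lemma~\ref{lem:decrX} gives $\|X\|<r$ bounded on a neighbourhood, so the total arc-length of the flow from $(x_n,t_n)$ to its stationary value is at most $b_n\cdot\sup r \to 0$. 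Hence $H((x_n,t_n),u_n)\to(x_0,t_0)$.

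Combining these five items yields that $H$ is a strong deformation retraction of $RD_V(I)$ onto $D_V(I)$, fibrewise over $\Delta(I)$, completing the proof.
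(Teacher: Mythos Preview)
Your proof is correct and follows the same approach as the paper, which simply states that the claim ``follows immediately from the properties of $\Phi$ described in the previous proposition.'' You have carefully unpacked those properties into the five items (i)--(v) and, in particular, filled in the continuity argument at $u=1$ that the paper leaves implicit; your two-case analysis using the uniform bound $b < (r-1)/\hat{t}$ from Lemma~\ref{lem:curve} together with $\|X\| < r$ is exactly the mechanism that makes the extension to $u=1$ continuous.
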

\begin{proof}
It follows from Lemma~\ref{lem:curve} that for each $(x,t) \in RD_V(I)$, there is some $u_0 < 1$ such that $H((x,t),u_0) \in D_V(I)$, and that $H((x,t),u) = H((x,t),u_0)$ for all $u_0 \leq u < 1$. We therefore extend $H$ by setting
\[ H((x,t),1) := H((x,t),u_0) = \lim_{u \to 1} H((x,t),u) \in D_V(I). \]
Since $H$ is continuous, this extended function is too. Finally, we note that $H((x,t),0) = \Phi((x,t),0) = (x,t)$ and that if $(x,t) \in D_V(I)$, then $H((x,t),u) = (x,t)$ for all $u \in [0,1]$. Thus the extended function $H$ is a deformation retraction as required.
\end{proof}

\section{The Fulton-MacPherson operad} \label{sec:FM}

The Fulton-MacPherson operad $F_V$ is another model for the topological operad $E_V$, consisting of certain compactifications of the configuration spaces of points in $V$, modulo translation and positive scaling. These compactifications were defined by Fulton and MacPherson \cite{fulton/macpherson:1994} based on ideas of Axelrod and Singer~\cite{axelrod/singer:1994}, and were given an operad structure by Getzler and Jones~\cite{getzler/jones:1994}. We will use a different description of these spaces, inspired by work of Barber~\cite{barber:2017}, which fits better with our later constructions.

\begin{definition} \label{def:FM}
Let $V$ be a finite-dimensional real vector space, and let $I$ be a finite set. We define the topological space $F_V(I)$ as follows: a point $y \in F_V(I)$ assigns to each subset $J \subseteq I$ with $|J| \geq 2$ a $J$-tuple $y(J)$ of vectors in $V$, not all equal, defined modulo translation and positive scaling, such that
\begin{itemize}
  \item for $J \subseteq J'$, we have either that $y(J')|_J \equiv y(J)$, modulo translation and positive scaling, or that $y(J')|_J$ is a constant $J$-tuple.
\end{itemize}
Here $y(J')|_J$ denotes the restricted $J$-tuple $(y(J')_j)_{j \in J}$. The topology on $F_V(I)$ is the subspace topology relative to $\prod_{J \subseteq I} V^J/(V \times (0,\infty))$.
\end{definition}

\begin{remark}
Suppose $y \in F_V(I)$ is such that all the vectors in $y(I)$ are distinct. Then $y(J)$ is completely determined for all $J \subseteq I$ by the restriction condition. The subspace of $F_V(I)$ consisting of such points is therefore homeomorphic to the ordinary configuration space of $I$-tuples in $V$, modulo translation and positive scaling. We denote this subspace of $F_V(I)$ by $\mathring{F}_V(I)$.
\end{remark}

\begin{example} \label{ex:FV}
Here is an example of a specific point $y$ in the space $F_{\R^2}(\un{4})$, where $\un{4} = \{1,2,3,4\}$. First, we have $y(\un{4}) = (y_1,y_2,y_3,y_4)$: a $4$-tuple of points in $\R^2$, for example:

\begin{center}
\begin{tikzpicture}
	\begin{pgfonlayer}{nodelayer}
		\node [style=none] (0) at (0, 0.5) {$\bullet$};
		\node [style=none] (1) at (2, -1) {$\bullet$};
		\node [style=none] (2) at (0, 1) {};
		\node [style=none] (5) at (0, 1) {$y_1 = y_2 = y_4$};
		\node [style=none] (6) at (-2, 2) {};
		\node [style=none] (7) at (-2, -2) {};
		\node [style=none] (8) at (4, -2) {};
		\node [style=none] (9) at (4, 2) {};
		\node [style=none] (10) at (2.75, -1) {$y_3$};
	\end{pgfonlayer}
	\begin{pgfonlayer}{edgelayer}
		\draw (6.center) to (7.center);
		\draw (9.center) to (6.center);
		\draw (8.center) to (7.center);
		\draw (9.center) to (8.center);
	\end{pgfonlayer}
\end{tikzpicture}
\end{center}

For any subset $J \subseteq \un{4}$ that includes $3$, the $J$-tuple $y(J)$ is determined by $y(\un{4})$; the picture is the same with points removed. However, the point $y$ also includes an object $y(\{1,2,4\})$ which could be any configuration of three points in $\R^2$ (not all equal). The intuition here is that we have zoomed infinitely far into the previous picture so that we can now distinguish the points $y_1$, $y_2$ and $y_4$. For example, we might have:

\begin{center}
\begin{tikzpicture}
	\begin{pgfonlayer}{nodelayer}
		\node [style=none] (0) at (3, 0) {$\bullet$};
		\node [style=none] (1) at (1.5, -2) {$\bullet$};
		\node [style=none] (2) at (0.75, -2) {$y_2$};
		\node [style=none] (3) at (3, -0.5) {$y_1 = y_4$};
		\node [style=none] (6) at (-1, 1) {};
		\node [style=none] (7) at (-1, -3) {};
		\node [style=none] (8) at (5, -3) {};
		\node [style=none] (9) at (5, 1) {};
	\end{pgfonlayer}
	\begin{pgfonlayer}{edgelayer}
		\draw (6.center) to (7.center);
		\draw (9.center) to (6.center);
		\draw (8.center) to (7.center);
		\draw (9.center) to (8.center);
	\end{pgfonlayer}
\end{tikzpicture}
\end{center}

The only additional information carried by the point $y$ that is still undetermined is the relationship between $y_1$ and $y_4$, i.e.\ the $2$-tuple $y(\{1,4\})$. For example:

\begin{center}
\begin{tikzpicture}
	\begin{pgfonlayer}{nodelayer}
		\node [style=none] (0) at (3.5, -1.5) {$\bullet$};
		\node [style=none] (1) at (0.5, -1) {$\bullet$};
		\node [style=none] (2) at (0.5, -0.5) {$y_4$};
		\node [style=none] (3) at (3.5, -1) {$y_1$};
		\node [style=none] (6) at (-1, 1) {};
		\node [style=none] (7) at (-1, -3) {};
		\node [style=none] (8) at (5, -3) {};
		\node [style=none] (9) at (5, 1) {};
	\end{pgfonlayer}
	\begin{pgfonlayer}{edgelayer}
		\draw (6.center) to (7.center);
		\draw (9.center) to (6.center);
		\draw (8.center) to (7.center);
		\draw (9.center) to (8.center);
	\end{pgfonlayer}
\end{tikzpicture}
\end{center}
\end{example}

The $J$-tuple $y(J)$ for all other subsets $J \subseteq \{1,2,3,4\}$ is now determined by $y(\un{4})$, $y(\{1,2,4\})$ and $y(\{1,4\})$ together with the restriction condition in Definition~\ref{def:FM}.

\begin{remark}
A common approach to the description of the Fulton-MacPherson space $F_V(I)$ is to only specify the non-redundant information; for example, in the previous example, that would be by giving only the three illustrated configurations: a point in $\mathring{F}_V(2) \times \mathring{F}_V(2) \times \mathring{F}_V(2)$. It is more convenient for us to have all the relationships between the points $y_i$ specified for all possible subsets of $I$, so we build that information into our definition.
\end{remark}

\begin{remark}
An explicit construction of the Fulton-MacPherson operad was given by Sinha in \cite{sinha:2004} where the space we are calling $F_{\R^m}(n)$ was labelled $\tilde{C}_{n}[\R^m]$ and was defined to be a certain closed subspace of $(S^{m-1})^{C_2(n)} \times [0,\infty]^{C_3(n)}$ where $C_r(n)$ denotes the set of $r$-element subsets of the finite set $n$. To match up our definition with Sinha's, we define a map
\[ F_{\R^m}(n) \to (S^{m-1})^{C_2(n)} \times [0,\infty]^{C_3(n)} \]
by sending the point $y$ to the collection consisting of:
\begin{itemize}
  \item for each $2$-element subset $J = \{j_1,j_2\} \subseteq n$, the point
  \[ \frac{y(J)_{j_1} - y(J)_{j_2}}{|y(J)_{j_1} - y(J)_{j_2}|} \in S^{m-1}; \]
  \item for each $3$-element subset $K = \{k_1,k_2,k_3\} \subseteq n$, the point
  \[ \frac{|y(K)_{k_1} - y(K)_{k_2}|}{|y(K)_{k_1} - y(K)_{k_3}|} \in [0,\infty]. \]
\end{itemize}
This map determines a homeomorphism of $F_{\R^m}(n)$ with Sinha's $\tilde{C}_{n}[\R^m]$. \end{remark}

\begin{remark}
The Fulton-MacPherson space $F_V(I)$ is stratified by the poset of $I$-labelled trees to be introduced in section~\ref{sec:map}. The tree corresponding to a particular point $y \in F_V(I)$ consists of those subsets $J$ for which the term $y(J)$ is not determined by any larger subset. In Example~\ref{ex:FV}, the relevant tree would have non-leaf edges
\[ \{\{1,2,3,4\}, \{1,2,4\}, \{1,4\} \}. \]
Pictorially, this is the following tree:

\begin{center}
\begin{tikzpicture}
	\begin{pgfonlayer}{nodelayer}
		\node [style=none] (0) at (0, 0) {$\bullet$};
		\node [style=none] (1) at (0, 1) {$\bullet$};
		\node [style=none] (2) at (-1, 2) {$\bullet$};
		\node [style=none] (3) at (3, 4) {$\bullet$};
		\node [style=none] (4) at (-2, 3) {$\bullet$};
		\node [style=none] (5) at (1, 4) {$\bullet$};
		\node [style=none] (6) at (-3, 4) {$\bullet$};
		\node [style=none] (7) at (-1, 4) {$\bullet$};
		\node [style=none] (8) at (-3, 4.5) {$1$};
		\node [style=none] (9) at (-1, 4.5) {$4$};
		\node [style=none] (10) at (1, 4.5) {$2$};
		\node [style=none] (11) at (3, 4.5) {$3$};
	\end{pgfonlayer}
	\begin{pgfonlayer}{edgelayer}
		\draw (6.center) to (1.center);
		\draw (7.center) to (4.center);
		\draw (5.center) to (2.center);
		\draw (3.center) to (1.center);
		\draw (1.center) to (0.center);
	\end{pgfonlayer}
\end{tikzpicture}
\end{center}
\end{remark}

\begin{definition} \label{def:FM-operad}
We now put an operad structure on $F_V$. The intuition behind the operad composition maps is that we are inserting one configuration infinitesimally in place of one point of another. This construction is easy to define using our version of $F_V(I)$.

Given $y \in F_V(I)$, $z \in F_V(J)$ and $i \in I$, we define $y \circ_i z \in F_V(I \cup_i J)$ by setting, for $K \subseteq I \cup_i J$:
\begin{equation} \label{eq:FM-operad} (y \circ_i z)(K) := \begin{cases} z(K) & \text{if $K \subseteq J$}; \\ \pi^*y(\pi(K)) & \text{if $K \nsubseteq J$}; \end{cases} \end{equation}
where $\pi: I \cup_i J \to I$ is the map that sends each element of $J$ to $i$, and $\pi^*$ denotes the pullback of a $\pi(K)$-tuple along $\pi$.
\end{definition}

\begin{theorem} \label{thm:FM}
The construction described in Definition~\ref{def:FM-operad} make $F_V$ into an operad in the category of topological spaces with an action of the general linear group $GL(V)$.
\end{theorem}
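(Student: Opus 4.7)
The plan is to verify the four conditions for a $GL(V)$-operad: well-definedness of $y \circ_i z$ as an element of $F_V(I \cup_i J)$, continuity, naturality in bijections, associativity (both sequential and parallel), and $GL(V)$-equivariance. Continuity is immediate from the subspace topology on $F_V(I \cup_i J)$ inside $\prod_L V^L/(V \times (0,\infty))$, since each coordinate of $y \circ_i z$ is either a coordinate of $z$ or the pullback of a coordinate of $y$ along the map $\pi : I \cup_i J \to I$ collapsing $J$ to $i$, and both projection and $\pi^*$ are continuous. Similarly, $GL(V)$-equivariance follows because the action on each space $F_V(K)$ is the diagonal action on vectors, which commutes with projection and pullback.

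First I would check that for each $L \subseteq I \cup_i J$ with $|L| \geq 2$, the value $(y \circ_i z)(L)$ is a well-defined non-constant $L$-tuple modulo translation and positive scaling. If $L \subseteq J$, this is $z(L)$ and so is handled by $z \in F_V(J)$. Otherwise $L$ meets $I \setminus \{i\}$; since $|L| \geq 2$, the image $\pi(L) \subseteq I$ also has cardinality $\geq 2$, so $y(\pi(L))$ is defined and non-constant, whence so is $\pi^* y(\pi(L))$. Next I would verify the restriction condition for $L \subseteq L' \subseteq I \cup_i J$. Three cases arise: (i) if $L' \subseteq J$ then $L \subseteq J$ and the condition reduces to the one for $z$; (ii) if $L \subseteq J$ but $L' \nsubseteq J$ then $\pi$ collapses $L$ to $\{i\}$, so the restriction of $\pi^* y(\pi(L'))$ to $L$ is the constant $L$-tuple with value $y(\pi(L'))_i$; (iii) if $L \nsubseteq J$ then also $L' \nsubseteq J$, and the restriction condition for $y$ applied to $\pi(L) \subseteq \pi(L')$ pulls back along $\pi$ to give the required condition on $\pi^* y$.

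The main step is to establish associativity. For the sequential form with $j \in J$, let $\pi_1 : I \cup_i J \to I$ and $\pi_2 : I \cup_i J \cup_j K \to I \cup_i J$, and note that $\pi_1 \circ \pi_2$ agrees with the projection $I \cup_i J \cup_j K \to I$ collapsing both $J$ and $K$ to $i$. For each subset $L$ of the target, I would evaluate both $(y \circ_i z) \circ_j w$ and $y \circ_i (z \circ_j w)$ on $L$ and trifurcate according to whether $L \subseteq K$, $L \subseteq J \cup_j K$ but $L \nsubseteq K$, or $L \nsubseteq J \cup_j K$; in each case both sides reduce to the same expression, namely $w(L)$, a pullback of the form $\pi_2^* z(\pi_2(L))$, or the iterated pullback of $y(\pi_1 \pi_2(L))$. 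The parallel form with $i \neq i' \in I$ is analogous, decomposing $L \subseteq I \cup_i J \cup_{i'} J'$ by its intersections with $J$ and $J'$ and using that the two insertion projections commute.

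The hard part will be organizing the associativity case analysis cleanly, but no genuine obstacle arises: every construction reduces to pullback and restriction along composable maps of finite sets, and these operations are strictly associative, so the two parenthesizations agree on the nose rather than only up to coherent homotopy. Naturality in bijections is then immediate, since the definition of $y \circ_i z$ uses only canonical data (projections, pullbacks, and restrictions).
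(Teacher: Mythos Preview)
Your proposal is correct and follows essentially the same approach as the paper: the paper's proof verifies well-definedness of $y \circ_i z$ via the same three-case restriction analysis, observes continuity is checked coordinatewise, and then explicitly leaves the associativity conditions to the reader. Your sketch is in fact more thorough than the paper's, since you outline the sequential and parallel associativity case analyses and the $GL(V)$-equivariance that the paper omits.
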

\begin{proof}
We check first that Definition~\ref{def:FM-operad} provides a well-defined point $y \circ_i z \in F_V(I \cup_i J)$. Notice that if $K \nsubseteq J$, then $|\pi(K)| \geq 2$, and not all points in $y(\pi(K))$ are equal. Since $\pi$ is surjective, it follows that not all points in $\pi^*y(\pi(K))$ are equal. Suppose that $K \subseteq K'$; we are required to show that
\[ (y \circ_i z)(K) \equiv (y \circ_i z)(K')|_K \]
or else the latter object is a constant $K$-tuple.

If $K' \subseteq J$, then this is because $z(K) \equiv z(K')|_K$. If $K \nsubseteq J$, then it is because $\pi(K) \subseteq \pi(K')$ and
\[ \pi^*y(\pi(K)) \equiv \pi^*(y(\pi(K'))|_{\pi(K)}) = \pi^*y(\pi(K'))|_K. \]
Finally, if $K \subseteq J$ but $K' \nsubseteq J$, then
\[ (y \circ_i z)(K')|_K = \pi^*y(\pi(K'))|_K \]
is a constant $K$-tuple.

We thus conclude that (\ref{eq:FM-operad}) defines a function
\[ \circ_i: F_V(I) \times F_V(J) \to F_V(I \cup_i J) \]
and it is straightforward to check that $\circ_i$ is continuous since this condition can be checked on each subset $K \subseteq I \cup_i J$ separately. We leave the reader to check the associativity conditions for these composition maps to determine an operad structure.
\end{proof}

The second author proved the following result in~\cite[4.9]{salvatore:2001}, thus showing that the Fulton-MacPherson operad is indeed a model for the topological $E_n$-operad.

\begin{proposition}
The operad $F_V$ is cofibrant (in the projective model structure on reduced operads of topological spaces) and is $O(V)$-equivariantly equivalent to $E_V$.
\end{proposition}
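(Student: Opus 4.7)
The plan has two parts: establishing an $O(V)$-equivariant level-wise equivalence with $E_V$, and showing $F_V$ is cofibrant.

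First I would identify the open stratum $\mathring{F}_V(I) \subseteq F_V(I)$ consisting of those $y$ for which $y(I)$ has pairwise distinct coordinates. By the restriction condition in Definition~\ref{def:FM}, such a $y$ is determined by $y(I)$ alone, so $\mathring{F}_V(I)$ is homeomorphic to $C_V(I)$ modulo the free action of translation and positive scaling; since this acting group is contractible, $\mathring{F}_V(I) \homeq C_V(I)$. The next step is to show that the inclusion $\mathring{F}_V(I) \hookrightarrow F_V(I)$ is a homotopy equivalence. This is the standard fact that a manifold with corners deformation retracts onto its interior via collar neighbourhoods of its stratified boundary; the stratification here is by trees, and the collar structure is controlled by the parameters measuring how infinitesimal the nested configurations are. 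Combined with the equivalence $E_V(I) \homeq C_V(I)$ given by the centre map (recalled in the proof of Proposition~\ref{prop:D}), this yields level-wise $O(V)$-equivariant weak equivalences. To promote these to an operad-level equivalence I would construct an explicit $O(V)$-equivariant operad map $\phi: E_V \to F_V$: given $(x,t) \in E_V(I)$, define $\phi(x,t) = y$ by $y(J) := (x_j)_{j \in J}$ for every $J \subseteq I$ with $|J| \geq 2$, taken modulo translation and positive scaling; this is well-defined because the disc-disjointness condition forces the $x_j$ to be distinct. Verifying that $\phi$ is an operad map reduces to comparing Definition~\ref{def:P2} with Definition~\ref{def:FM-operad} after dividing out by translation and positive scaling on each subset $J$.

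For cofibrancy in the projective model structure on reduced operads of topological spaces, the plan is to express $F_V$ as a sequential colimit of operad pushouts of free operad cells indexed by the tree stratification of the Fulton--MacPherson compactification. For each $n \geq 0$ let $F_V^{(n)} \subseteq F_V$ be the suboperad consisting of those $y$ whose associated tree, the poset of $J \subseteq I$ for which $y(J)$ is not determined by a strictly larger subset, has at most $n$ internal edges. Then $F_V^{(0)}$ is essentially trivial, each inclusion $F_V^{(n-1)} \hookrightarrow F_V^{(n)}$ is obtained as an operadic pushout attaching free operad cells indexed by trees with $n$ internal edges, along boundary cofibrations of the associated manifolds with corners, and $F_V = \colim_n F_V^{(n)}$. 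Because each attaching pair is a cofibration in the projective model structure on symmetric sequences of $O(V)$-spaces, the resulting colimit is a cofibrant operad.

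The principal difficulty I anticipate lies in the operad-map verification. The FM composition in Definition~\ref{def:FM-operad} inserts one configuration infinitesimally so that the inserted configuration is only determined modulo scaling, while the $E_V$ composition glues discs at a concrete nonzero scale. One has to check that these two prescriptions agree after quotienting by translation and positive scaling on each subset, which they do because the restriction condition in $F_V$ is preserved under composition and the formula $x +_i ty$ of Definition~\ref{def:P2} recovers $\pi^* y$ on subsets $K \nsubseteq J$ and $z$ on subsets $K \subseteq J$, once passed to translation-scale classes. If a direct strict map turns out to be obstructed, for example by continuity at degenerate configurations, the fallback would be a zigzag through a cofibrant replacement of $E_V$ that maps into both operads. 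The cofibrancy argument is largely formal once the stratification is in place, but some care is needed to ensure that the cell attachments at each tree stratum respect both the symmetric group and the $O(V)$-actions; this holds because the underlying tree combinatorics is independent of $V$ while the $O(V)$-action enters only through the vertex configurations, on which it acts by the ambient linear action.
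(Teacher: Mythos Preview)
Your proposed explicit operad map $\phi: E_V \to F_V$ is not an operad map. The composition in $F_V$ (Definition~\ref{def:FM-operad}) inserts the inner configuration \emph{infinitesimally}: for any $K \nsubseteq J$, the tuple $(y \circ_i z)(K)$ is the pullback $\pi^*y(\pi(K))$, in which all $J$-indexed entries coincide at $y(\pi(K))_i$. By contrast, the $E_V$-composite $(x,t) \circ_i (z,u) = (x +_i tz, t \cdot_i u)$ has all centres pairwise distinct, so its image under $\phi$ lies in the open stratum $\mathring{F}_V(I \cup_i J)$. Thus $\phi((x,t) \circ_i (z,u))$ and $\phi(x,t) \circ_i \phi(z,u)$ lie in different strata of $F_V(I \cup_i J)$ and are never equal. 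Your claimed verification that ``$x +_i ty$ recovers $\pi^* y$ on subsets $K \nsubseteq J$ once passed to translation-scale classes'' fails precisely because the $J$-indexed entries $x_i + t_i z_j$ remain distinct after quotienting.

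The paper avoids this obstruction by citing~\cite{salvatore:2001}, where the equivalence is realized through the Boardman--Vogt $W$-construction: one builds an explicit $O(V)$-equivariant operad map $WE_V \weq F_V$. The $W$-construction supplies edge-length parameters interpolating between finite-scale gluing and infinitesimal insertion, which is exactly the structure needed to reconcile the two composition laws. Your fallback of ``a zigzag through a cofibrant replacement'' is thus the correct route, but the choice of replacement and the construction of the map to $F_V$ are the substance of the proof, not a contingency. Your cofibrancy outline via a tree-indexed cellular filtration is in the right spirit and close to how~\cite{salvatore:2001} proceeds.
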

\begin{proof}
The $O(V)$-equivariance is not explicitly mentioned in \cite{salvatore:2001}, but the equivalence of operads $WE_V \weq F_V$ constructed there to prove this result has the necessary equivariance anyway.
\end{proof}

\begin{remark} \label{rem:FM}
In working with the operad $F_V$, it will be convenient to choose specific representatives $y(J)$ of the $J$-tuples that make up a point $y \in F_V(I)$. However, these choices will be fibred over the simplex (co)operad, that is, they will depend on a fixed set of `weights' $t \in (0,\infty)^I$ with $\sum_i t_i = 1$. So, unless stated otherwise, and assuming a point $t \in \Delta(I)$ is given, we will assume that, for each subset $J \subseteq I$, the $J$-tuple $y(J) \in V^J$ satisfies the following conditions:
\begin{itemize}
  \item a \emph{weighted barycentre condition}:
  \[ \sum_{j \in J} t_j y(J)_j = 0, \]
  which fixes $y(J)$ up to positive scaling;
  \item a \emph{weighted norm condition}
  \[ \frac{\sum_{j \in J} t_j \; |y(J)_j|}{\sum_{j \in J} t_j} = 1 \]
  which fixes $y(J) \in V^J$.
\end{itemize}
\end{remark}

In general the definition of $y \circ_i z$ in (\ref{eq:FM-operad}) does not satisfy the weighted barycentre and norm conditions, though it does for certain subsets $K \subseteq I \cup_i J$.

\begin{lemma} \label{lem:FM-norm}
Suppose that $y$ satisfies the weighted barycentre and norm conditions with respect to $t \in \Delta(I)$, and that $z$ satisfies those conditions with respect to $u \in \Delta(J)$. Let $K$ be a subset of $I \cup_i J$ satisfying one of the following three conditions: (1) $K \subseteq J$; (2) $K \supseteq J$; (3) $J \cap K = \varnothing$. Then $(y \circ_i z)(K)$ satisfies the weighted barycentre and norm conditions with respect to the weighting $t \cdot_i u \in \Delta(I \cup_i J)$.
\end{lemma}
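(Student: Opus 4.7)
The plan is to verify each of the three cases directly, using the fact that the weighting $t \cdot_i u$ restricts either to a constant multiple of the original weight, or else decomposes into a piece on $I \setminus J$ and a piece on $J$ whose total mass equals $t_i$. The essential algebraic facts we will use are that $u \in \Delta(J)$ implies $\sum_{j \in J} u_j = 1$, and that $z$ and $y$ themselves satisfy the weighted barycentre and norm conditions on any subset of $J$ (resp.\ any subset of $I$) by the standing assumption of Remark~\ref{rem:FM}.

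First I would dispose of case (1), where $K \subseteq J$. In this case $(y \circ_i z)(K) = z(K)$ and the restricted weights are $(t \cdot_i u)|_K = (t_i u_k)_{k \in K}$, which is simply $t_i$ times the weights $u|_K$ that $z(K)$ already satisfies. Both the barycentre sum $\sum_{k \in K} t_i u_k z(K)_k$ and the normalized norm $\sum_{k \in K} t_i u_k |z(K)_k| / \sum_{k \in K} t_i u_k$ are unaffected by the scalar factor $t_i$, so the conditions transfer immediately.

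Next I would handle case (3), where $K \cap J = \emptyset$; then $K \subseteq I \setminus \{i\}$, $\pi$ is the identity on $K$, and so $(y \circ_i z)(K) = y(K)$. Moreover $(t \cdot_i u)|_K = t|_K$ by definition. The conditions are thus precisely those already satisfied by $y(K)$ with respect to $t$.

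The substantive case is (2), where $K \supseteq J$. Here $\pi(K) = (K \setminus J) \cup \{i\}$, and $(\pi^* y(\pi(K)))_k$ equals $y(\pi(K))_k$ for $k \in K \setminus J$ and equals $y(\pi(K))_i$ for $k \in J$. Splitting the barycentre sum into its $K \setminus J$ and $J$ parts, I get
\[
\sum_{k \in K \setminus J} t_k \, y(\pi(K))_k \;+\; t_i \, y(\pi(K))_i \sum_{k \in J} u_k,
\]
and the factor $\sum_{k \in J} u_k$ equals $1$ since $u \in \Delta(J)$, so the expression collapses to $\sum_{j \in \pi(K)} t_j \, y(\pi(K))_j = 0$ by the barycentre condition on $y(\pi(K))$ with weights $t|_{\pi(K)}$. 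The same splitting of numerator and denominator for the norm condition uses $\sum_{k \in J} u_k = 1$ in identical fashion: the denominator becomes $\sum_{k \in K \setminus J} t_k + t_i = \sum_{j \in \pi(K)} t_j$, the numerator becomes $\sum_{j \in \pi(K)} t_j |y(\pi(K))_j|$, and the quotient is $1$.

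There is no real obstacle here beyond keeping the indexing straight; the only slightly subtle point is to recognize that in case (2) the element $i \in \pi(K)$ carries total mass $t_i$ in the reduced weighting, and that this is exactly what the condition $\sum_{k \in J} u_k = 1$ produces when one aggregates the $J$-contributions in $K$.
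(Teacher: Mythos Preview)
Your proof is correct and follows essentially the same approach as the paper: both verify the three cases directly by unwinding the definition of $t \cdot_i u$ and using $\sum_{j \in J} u_j = 1$ to collapse the $J$-contributions in case (2). The only cosmetic difference is that the paper treats case (3) after case (2) as a specialization of the same computation, whereas you handle it separately and earlier.
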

\begin{proof}
We deal with the three cases separately: (1) for $K \subseteq J$, we have
\[ \sum_{k \in K} (t \cdot_i u)_k(y \circ_i z)(K)_k = \sum_{k \in K} t_iu_kz(K)_k = t_i \cdot 0 = 0 \]
and
\[ \sum_{k \in K} (t \cdot_i u)_k \; |(y \circ_i z)(K)_k| = \sum_{k \in K} t_iu_k \; |z(K)_k| = t_i \cdot \sum_{k \in K} u_k = \sum_{k \in K} (t \cdot_i u)_k; \]
(2) for $K \supseteq J$, we have
\[ \begin{split} \sum_{k \in K} (t \cdot_i u)_k(y \circ_i z)(K)_k &= \sum_{k \in K \setminus J} t_k y(\pi(K))_k + \sum_{k \in J} t_i u_k y(\pi(K))_i \\ &= \sum_{k \in K \setminus J} t_k y(\pi(K))_k + t_i y(\pi(K))_i = \sum_{k \in \pi(K)} t_k y(\pi(K))_k = 0 \end{split} \]
and
\[ \begin{split} \sum_{k \in K} (t \cdot_i u)_k \; |(y \circ_i z)(K)_k| &= \sum_{k \in K \setminus J} t_k \; |y(\pi(K))_k| + \sum_{k \in J} t_i u_k \; |y(\pi(K))_i| \\ &= \sum_{k \in K \setminus J} t_k \; |y(\pi(K))_k| + t_i |y(\pi(K))_i| \\ &= \sum_{k \in \pi(K)} t_k \; |y(\pi(K))_k| = \sum_{k \in \pi(K)} t_k = \sum_{k \in K \setminus J} t_k + t_i = \sum_{k \in K} (t \cdot_i u)_k; \end{split} \]
and finally, for (3) $J \cap K = \varnothing$, these same last equations apply with the terms involving $k \in J$, and $t_i$, removed.
\end{proof}

\section{Operadic suspension} \label{sec:susp}

Recall that for an operad $\mathsf{P}$ of chain complexes there is a simple suspension operation $s$ for which $s\mathsf{P}(n)$ is given by shifting the chain complex $\mathsf{P}(n)$ up by degree $n-1$, introducing signs as necessary. An $s\mathsf{P}$-algebra can then be identified with a $\mathsf{P}$-algebra shifted down in degree.

In~\cite{arone/kankaanrinta:2014}, Arone and Kankaanrinta described a topological version of the operadic suspension based on what they called a `sphere operad', that is an operad $S$ of pointed spaces for which the $n$th term is homeomorphic to the sphere $S^{n-1}$, with composition maps that are homeomorphisms. In~\cite[1.1]{arone/kankaanrinta:2014}, they laid out some desirable properties for such an operad, and then proved that such an operad exists. They defined the `suspension' for an operad $P$ of pointed spaces (or spectra) to be given by taking a termwise smash product with $S$. As described in the Introduction to \cite{arone/kankaanrinta:2014}, the key property of $S$ as regards the operadic suspension is that $\Sigma^\infty S$ is equivalent, as an operad of spectra, to the coendomorphism operad of the spectrum $\Sigma^\infty S^1$.

In this paper, we use the $V$-sphere (co)operad $S_V$ of Definition~\ref{def:SV} in place of $S$. The operad $S_V$ does not enjoy all the properties described in \cite[1.1]{arone/kankaanrinta:2014}: for example, its terms are only homotopy equivalent to spheres, not homeomorphic. Nonetheless, the following result justifies our use of $S_V$ to construct a $V$-indexed suspension for (co)operads of spectra. Moreover, the advantage that $S_V$ has over, say, the smash product of some number of copies of $S$ is that $S_V$ retains an action of the general linear group $GL(V)$.

\begin{proposition}
Let $V$ be a finite-dimensional real vector space with one-point compactification $S^V$. Then the operad of spectra $\Sigma^\infty S_V$ is equivalent to the coendomorphism operad of the spectrum $\Sigma^\infty S^V$.
\end{proposition}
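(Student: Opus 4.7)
The plan is to construct an explicit map of operads of spectra
\[
  \alpha \colon \Sigma^\infty S_V \longrightarrow \operatorname{coEnd}(\Sigma^\infty S^V)
\]
and show that it is a levelwise equivalence. For each finite set $I$ (with $|I|\ge 2$), the underlying space-level map $\alpha_I \colon S_V(I) \smsh S^V \to S^{V^I}$ is built from the observation that, for each $t \in \Delta(I)$, the diagonal embedding $V \hookrightarrow V^I$ is complementary to the weighted-zero-barycentre subspace $\{x \in V^I : \sum_{i \in I} t_i x_i = 0\}$. These splittings assemble into a fibrewise linear isomorphism of trivial vector bundles over $\Delta(I)$,
\[
  R_V(I) \oplus (V \times \Delta(I)) \;\isom\; V^I \times \Delta(I),\qquad ((x,t), v) \mapsto \bigl((x_i + v)_{i \in I},\, t\bigr),
\]
and passing to Thom spaces yields a homeomorphism $S_V(I) \smsh S^V \isom \Delta(I)_+ \smsh S^{V^I}$. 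Then $\alpha_I$ is the composite of this homeomorphism with the collapse $\Delta(I)_+ \to S^0$; taking adjoints and applying $\Sigma^\infty$ produces the spectrum-level map.

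The levelwise equivalence follows because $S_V(I) \homeq S^{V^I/V}$ (as $\Delta(I)$ is contractible) while $\Map(\Sigma^\infty S^V,\, \Sigma^\infty S^{V^I}) \homeq \Sigma^\infty S^{V^I - V} \homeq \Sigma^\infty S^{V^I/V}$, using that the diagonal $V \hookrightarrow V^I$ is a split monomorphism (for any fixed $t$, the weighted-barycentre projection gives a retraction). By construction, $\alpha_I$ realizes this equivalence on each level.

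The hard part will be showing that $\alpha$ respects the operad compositions. The composition in $S_V$ is induced by the formula $(x,t) \circ_i (y,u) = (x +_i ty, \, t \cdot_i u)$ of Definition~\ref{def:P2}, which scales the $y$-component by $t_i$; the composition in the coendomorphism operad, by contrast, is plain plug-in without any scaling. Unrolling the two composites in the square $S_V(I) \smsh S_V(J) \smsh S^V \to S^{V^{I \cup_i J}}$ shows they differ precisely by the rescaling $y \mapsto t_i y$ at positions $j \in J$, and this discrepancy is canonically nulhomotopic via the straight-line path $s \mapsto (1-s)t_i + s$ in $(0,1]$. The coherence of these homotopies is a consequence of the fact that $R_V$ is an operad strictly over the simplex base, as in Proposition~\ref{prop:R-bundle}. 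To extract a genuine operad equivalence from this coherent data, one either passes to the $\infty$-category of operads of spectra, where coherent-homotopy operad maps are already equivalences, or rectifies $\alpha$ by replacing $\Sigma^\infty S_V$ with a cofibrant resolution, following the method used by Arone and Kankaanrinta~\cite{arone/kankaanrinta:2014} in the case $V = \mathbb{R}$.
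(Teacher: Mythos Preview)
Your levelwise construction and the argument that each $\alpha_I$ is a stable equivalence are correct. The issue is with the operad compatibility, where you introduce an unnecessary complication and then do not fully resolve it.

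The paper avoids the coherence problem entirely by a small modification of your formula: instead of $((x,t),v) \mapsto (x_i + v)_{i \in I}$, it uses
\[
  (v,(x,t)) \longmapsto \left(\frac{v + x_i}{t_i}\right)_{i \in I}.
\]
With this formula the operad square commutes \emph{on the nose}: at positions $j \in J$ one route gives $\dfrac{v + x_i + t_i y_j}{t_i u_j}$ and the other gives $\dfrac{(v+x_i)/t_i + y_j}{u_j}$, which are equal. The division by $t_i$ is precisely what absorbs the rescaling you identified. For each fixed $t$ the map $(v,x) \mapsto ((v+x_i)/t_i)_i$ is still a linear isomorphism $V \oplus R_V(I)_t \isom V^I$ (the weighted-barycentre condition on $x$ forces injectivity), so one still obtains a homotopy equivalence on Thom spaces and nothing is lost.

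Your coherent-homotopy route is not wrong in principle, but the justification is incomplete. The assertion that ``coherence is a consequence of the fact that $R_V$ is an operad strictly over the simplex base'' does not actually produce the higher coherences; you would still have to verify that the straight-line homotopies for iterated compositions are compatible, and then either exhibit a genuine map of $\infty$-operads or carry out an explicit rectification. That is doable but substantially more work than the one-line fix above.
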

\begin{proof}
We construct maps of pointed spaces
\[ S^V \smsh S_V(I) \to (S^V)^{\smsh I} \]
given (away from the basepoint) by
\[ (v,(x,t)) \mapsto \left(\frac{v+x_i}{t_i}\right)_{i \in I} \]
for $v \in V$, $(x,t) \in R_V(I)$. These induce maps
\[ \Sigma^\infty S_V(I) \to \Map(\Sigma^\infty S^V, (\Sigma^\infty S^V)^{\smsh I}) \]
which form the desired equivalence of operads. The required associativity conditions follow from the commutativity of diagrams such as the following:
\[ \begin{diagram}
  \node{S^V \smsh S_V(I) \smsh S_V(J)} \arrow{s} \arrow{e} \node{S^V \smsh S_V(I \cup_i J)} \arrow{s} \\
  \node{(S^V)^{\smsh I} \smsh S_V(J)} \arrow{e} \node{(S^V)^{\smsh I \cup_i J}}
\end{diagram} \]
given by
\[ \begin{diagram}
  \node{(v, (x,t), (y,u))} \arrow{s} \arrow{e} \node{(v, (x+_i ty,t \cdot_i u))} \arrow{s} \\
  \node{\left(\left(\frac{v+x_{i'}}{t_{i'}}\right)_{i' \in I}, (y,u)\right)} \arrow{e} \node{\left(\left(\frac{v+x_{i'}}{t_{i'}}\right)_{i' \in I - \{i\}}, \left(\frac{v+x_i+t_iy_j}{t_iu_j}\right)_{j \in J}\right).}
\end{diagram} \qedhere \]
\end{proof}

We now define our version of operadic suspension using $S_V$.

\begin{definition}
Let $\mathbf{P}$ be an operad or cooperad of spectra, and let $V$ be a finite-dimensional real vector space. The \emph{$V$-suspension} of $\mathbf{P}$ is the operad or cooperad $\Sigma^V\mathbf{P}$ given by
\[ (\Sigma^V\mathbf{P})(I) := S_V(I) \smsh \mathbf{P}(I) \]
for each finite set $I$ with $|I| \geq 2$. The necessary structure maps are given by combining those of $\mathbf{P}$ with the relevant homeomorphisms from Proposition~\ref{prop:S}. Note that if $\mathbf{P}$ has a $GL(V)$-action, then $\Sigma^V\mathbf{P}$ can be given the diagonal $GL(V)$-action.
\end{definition}

We can also define an operadic desuspension, but this construction is restricted to operads.

\begin{definition}
Let $\mathbf{P}$ be an operad of spectra, and let $V$ be a finite-dimensional real vector space. The \emph{$V$-desuspension} of $P$ is the operad of spectra $\Sigma^{-V}\mathbf{P}$ given by
\[ (\Sigma^{-V}\mathbf{P})(I) := \Map(S_V(I),\mathbf{P}(I)) \]
for each finite set $I$ with  $|I| \geq 2$. The necessary structure maps are then induced by the cooperad structure maps for $S_V$ and the operad structure maps for $\mathbf{P}$. If $\mathbf{P}$ is a $GL(V)$-operad, then so is $\Sigma^{-V}\mathbf{P}$.
\end{definition}

While we have defined operadic (de)suspension using the sphere (co)operad $S_V$, our duality map will involve a certain quotient of $S_V$ that is still homotopy equivalent to it.

\begin{definition} \label{def:S}
Let $V$ be a finite-dimensional real normed vector space. For a nonempty finite set $I$, we define
\[ \mathring{S}_V(I) := \{ (x,t) \in R_V(I) \; | \; |x_i| < t_i, \; |x_i - x_j| < \min\{t_i,t_j\} \text{ for all $i,j \in I$} \}. \]
We can visualise a point in $\mathring{S}_V(I)$ as an $I$-indexed collection of discs in $V$ such that the interior of each disc contains the origin as well as the center of every other disc.
\end{definition}

\begin{proposition} \label{prop:S-coop}
The subsets $\mathring{S}_V(I)$ are open and contractible, and form an $O(V)$-sub-cooperad of $R_V$ for which the decomposition maps are open embeddings.
\end{proposition}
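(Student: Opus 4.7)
The plan is to handle the four assertions separately. Openness is immediate because $\mathring{S}_V(I)$ is cut out of $R_V(I)$ by finitely many strict inequalities among continuous functions of $(x,t)$, and $O(V)$-equivariance is immediate because the diagonal $O(V)$-action on $V^I$ preserves norms, hence preserves the defining inequalities. For contractibility I would use the straight-line homotopy $H((x,t),s) = ((1-s)x,t)$: the weighted barycentre condition $\sum_i t_i x_i = 0$ scales linearly in $x$, and each strict inequality of the form $|x_i|<t_i$ or $|x_i - x_j| < \min\{t_i,t_j\}$ is preserved under multiplication by $1-s \in [0,1]$. This homotopy retracts $\mathring{S}_V(I)$ onto the slice $\{0\} \times \Delta(I) \cong \Delta(I)$, which is itself convexly contractible.

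The substantive step is checking that the decomposition map $\nu_i\colon R_V(I \cup_i J) \to R_V(I) \times R_V(J)$ from Proposition~\ref{prop:R} restricts to $\mathring{S}_V(I \cup_i J) \to \mathring{S}_V(I) \times \mathring{S}_V(J)$. Using the formulas in Lemma~\ref{lem:btcalc}, the inequalities indexed by $i' \neq i$ on the $R_V(I)$ side are inherited directly from $(x,t)$. For the new index $i$, where $(x/J)_i = x_J$ and $(t/J)_i = t_J$, I would estimate
\[ |x_J| \leq \sum_{j \in J} \frac{t_j}{t_J} |x_j| < \sum_{j \in J} \frac{t_j^2}{t_J} < t_J, \]
where the first strict inequality uses $|x_j| < t_j$ and the second uses $\sum_{j \in J} t_j^2 < (\sum_{j \in J} t_j)^2 = t_J^2$, which holds since $|J| \geq 2$ and all $t_j > 0$. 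The same convex-combination argument applied to $|x_j - x_{i'}| < \min\{t_j, t_{i'}\}$ yields $|x_J - x_{i'}| < \min\{t_J, t_{i'}\}$. For the restricted part $(x|J,t|J)$, the estimate $|x_j - x_J| \leq \sum_{j' \in J} (t_{j'}/t_J) |x_j - x_{j'}| < t_j$ gives $|(x|J)_j| < (t|J)_j$, while the difference inequality follows by dividing $|x_j - x_k| < \min\{t_j,t_k\}$ by $t_J$.

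Finally, since $\nu_i$ is a homeomorphism of $R_V(I \cup_i J)$ onto $R_V(I) \times R_V(J)$ by Proposition~\ref{prop:R}, and $\mathring{S}_V(I \cup_i J)$ is open in its source, the image $\nu_i(\mathring{S}_V(I \cup_i J))$ is automatically open in $R_V(I) \times R_V(J)$, hence open in $\mathring{S}_V(I) \times \mathring{S}_V(J)$, and the restriction of $\nu_i$ is a homeomorphism onto this open subset. The only step with genuine content is the strict estimate $|x_J| < t_J$ and its kin; these reduce to the elementary square-sum inequality above, so I do not expect a serious obstacle.
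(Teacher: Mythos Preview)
Your proposal is correct and follows essentially the same route as the paper: fibrewise star-shapedness for contractibility, then the same convex-combination estimates (including the square-sum inequality $\sum_{j \in J} t_j^2 < t_J^2$) to verify that $(x/J,t/J)$ and $(x|J,t|J)$ land in the respective $\mathring{S}_V$ pieces, with the open-embedding claim falling out from $\nu_i$ being a global homeomorphism. The only cosmetic difference is that the paper writes out each inequality case separately while you package them as instances of one convex-combination bound.
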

\begin{proof}
Each $\mathring{S}_V(I)$ is a (fibrewise over $\Delta(I)$) star-shaped open subset of $R_V(I)$, hence contractible. The cooperad structure maps for $R_V$ are homeomorphisms, so it is sufficient to show that those structure maps restrict to $\mathring{S}_V$.

So take $(x,t) \in \mathring{S}_V(I \cup_i J)$. Consider first $(x/J,t/J)$. We have
\[ |(x/J)_{i'}| = |x_{i'}| < t_{i'} = (t/J)_{i'}, \quad \text{for $i' \neq i$}, \]
and
\[ |(x/J)_i| = \left|\frac{\sum_{j \in J} t_jx_j}{\sum_{j \in J} t_j} \right| < \frac{\sum_{j \in J} t_j^2}{\sum_{j \in J} t_j} \leq \sum_{j \in J} t_j = (t/J)_i. \]
For $i',i'' \neq i$, we have
\[ |(x/J)_{i'} - (x/J)_{i''}| = |x_{i'} - x_{i''}| < \min\{t_{i'},t_{i''}\} = \min\{(t/J)_{i'},(t/J)_{i''}\} \]
and
\[ |(x/J)_{i'} - (x/J)_i| = \left|x_{i'} - \frac{\sum_{j \in J} t_jx_j}{\sum_{j \in J}t_j}\right| \leq \frac{\sum_{j \in J} t_j |x_{i'} - x_j|}{\sum_{j \in J}t_j}. \]
This is less than both
\[ \frac{\sum_{j \in J}t_jt_{i''}}{\sum_{j \in J}t_j} = t_{i''} = (t/J)_{i''} \]
and
\[ \frac{\sum_{j \in J}t_j^2}{\sum_{j \in J}t_j} \leq \sum_{j \in J} t_j = (t/J)_i \]
as required.

Now consider $(x|J,t|J)$. We have (using the previous calculation)
\[ |(x|J)_j| = \frac{|x_j - x_J|}{\sum_{j' \in J}t_{j'}} < \frac{t_j}{\sum_{j' \in J}t_{j'}} = (t|J)_j \]
and
\[ |(x|J)_j - (x|J)_{j'}| = \frac{|x_j - x_{j'}|}{\sum_{j'' \in J}t_{j''}} < \frac{\min\{t_j,t_{j'}\}}{\sum_{j'' \in J}t_{j''}} = \min\{(t|J)_j,(t|J)_{j'}\}. \qedhere \]
\end{proof}

\begin{definition} \label{def:barS}
For a finite-dimensional real normed vector space $V$, we define the quotient spaces
\[ \bar{S}_V(I) := S_V(I)/(S_V(I) - \mathring{S}_V(I)). \]
\end{definition}

\begin{proposition} \label{prop:barS}
The operad composition maps for $S_V$ pass to the quotients and make $\bar{S}_V$ into an operad of pointed spaces. The quotient maps $S_V(I) \to \bar{S}_V(I)$ form an equivalence of operads
\[ S_V \weq \bar{S}_V. \]
\end{proposition}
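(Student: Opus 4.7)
The plan is to address the two parts of the proposition separately: first, that the composition maps of $S_V$ descend to $\bar{S}_V$; second, that each quotient map $q_I \colon S_V(I) \to \bar{S}_V(I)$ is a weak equivalence.

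For the descent, I would use Proposition~\ref{prop:S-coop}: since the cooperad decomposition $\nu_i$ restricts to an open embedding $\mathring{S}_V(I \cup_i J) \hookrightarrow \mathring{S}_V(I) \times \mathring{S}_V(J)$, and $\nu_i$ is the inverse of the composition map on $R_V$, the contrapositive yields that whenever $(x,t) \notin \mathring{S}_V(I)$ or $(y,u) \notin \mathring{S}_V(J)$, the composite $(x \circ_i y, t \cdot_i u)$ lies outside $\mathring{S}_V(I \cup_i J)$. Combined with the fact that the smash basepoint is sent to the basepoint of the target (standard for the Thom-space composition of Proposition~\ref{prop:S}), this means the composition of $S_V$ carries the ``collapsed'' subspaces $S_V(I) \setminus \mathring{S}_V(I)$ and $S_V(J) \setminus \mathring{S}_V(J)$ into $S_V(I \cup_i J) \setminus \mathring{S}_V(I \cup_i J)$, so it descends to a continuous composition map $\bar{S}_V(I) \smsh \bar{S}_V(J) \to \bar{S}_V(I \cup_i J)$. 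Naturality and associativity transfer verbatim from $S_V$, producing the operad structure on $\bar{S}_V$ compatible with the quotient maps.

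For the weak equivalence, I would identify $\bar{S}_V(I)$ as the cofiber of the inclusion $A_I := S_V(I) \setminus \mathring{S}_V(I) \hookrightarrow S_V(I)$ and show that $A_I$ is based-contractible (with the inclusion a cofibration). The contraction I would use is radial scaling: define $H \colon A_I \times [0,1] \to A_I$ by $H((x,t),u) := (x/(1-u),t)$ for $(x,t) \in A_I \cap R_V(I)$ and $u \in [0,1)$, extended by sending everything to the basepoint $\infty$ at $u = 1$ and keeping $\infty$ fixed for all $u$. Scaling $x$ by a positive constant preserves the weighted barycentre condition $\sum_i t_i x_i = 0$ and the simplex condition $\sum_i t_i = 1$, so $H$ lands in $R_V(I) \cup \{\infty\}$. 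The essential observation is that $A_I$ is closed under outward radial scaling: if any of the strict inequalities $|x_i| < t_i$ or $|x_i - x_j| < \min\{t_i, t_j\}$ defining $\mathring{S}_V(I)$ fails at $(x,t)$, it continues to fail at $(sx,t)$ for all $s \geq 1$, so $H$ has image in $A_I$. Continuity at $u = 1$ and at the basepoint follows from the description $S_V(I) \cong S^{V^I/V} \smsh \Delta(I)_+$: as $|x/(1-u)| \to \infty$ the point approaches $\infty$, independently of $t \in \Delta(I)$.

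The main obstacle I anticipate is verifying that $A_I \hookrightarrow S_V(I)$ is a cofibration, so that $\bar{S}_V(I) = S_V(I)/A_I$ genuinely models the homotopy cofiber. Since $A_I$ is a closed subspace of the smooth trivial bundle $R_V(I) \to \Delta(I)$, adjoined to the basepoint at infinity, and is cut out by a finite union of non-strict (semi-algebraic, in fact convex-complement) inequalities, it admits a collar neighbourhood---for instance via a partition-of-unity argument along the normal directions to the various facets---so the pair $(S_V(I),A_I)$ is an NDR pair. Granted this, the contractibility of $A_I$ established above immediately yields that $q_I$ is a weak equivalence for each $I$, and the collection of these quotient maps assembles into the desired equivalence of operads.
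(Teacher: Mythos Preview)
Your argument is correct but follows a different route from the paper's. For the first claim (descent of the operad structure) you agree with the paper, both invoking Proposition~\ref{prop:S-coop}. For the second claim (that each $q_I$ is an equivalence), the paper instead constructs an explicit homotopy inverse $r: \bar{S}_V(I) \to S_V(I)$: on a point $(x,t) \in \mathring{S}_V(I)$ it rescales $x$ radially by a factor $-\log(1-b(x,t))$, where $b$ is a continuous gauge of how close $(x,t)$ is to the boundary of $\mathring{S}_V(I)$; as $b \to 1$ the factor blows up, so $r$ stretches $\mathring{S}_V(I)$ onto all of $R_V(I)$ and sends everything outside to $\infty$. Straight-line homotopies in the scaling factor then exhibit $r$ as a two-sided homotopy inverse to $q_I$.

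Both arguments exploit the same geometry---that $\mathring{S}_V(I)$ is fibrewise star-shaped about the zero section---but in complementary directions: the paper stretches the star-shaped region outward to fill $R_V(I)$, while you push its complement $A_I$ outward to $\infty$. The paper's approach is more self-contained, avoiding the cofibration verification you flag as the main obstacle, and produces an explicit inverse that could be reused elsewhere. Your approach is cleaner conceptually and makes transparent \emph{why} the quotient is an equivalence (the collapsed set is contractible), at the cost of the NDR check, which you sketch rather than carry out in full.
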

\begin{proof}
The first claim follows from Proposition~\ref{prop:S-coop}. For the rest, we must show that the quotient map $q: S_V(I) \to \bar{S}_V(I)$ is a weak homotopy equivalence. To see this claim, we show that $q$ is homotopic to a homeomorphism.

Given $(x,t) \in R_V(I)$ we have a continuous function given by
\[ b(x,t) := \max_{i,j \in I} \left\{ \frac{|x_i|}{t_i}, \frac{|x_i - x_j|}{\min\{t_i,t_j\}} \right\} \in [0,\infty). \]
% new suggestion
We define a homotopy $H: S_V(I) \times [0,1] \to \bar{S}_V(I)$ by
\[ H((x,t),s) = \left( \left( 1 - \frac{sb(x,t)}{1+b(x,t)} \right) x, t \right). \]
For $s = 0$, the homotopy $H$ restricts to $q$, and for $s = 1$ it restricts to the homeomorphism whose inverse is the function $\bar{S}_V(I) \to S_V(I)$ given by
\[ (y,t) \mapsto \left( \frac{1}{1-b(y,t)}y, t \right). \qedhere \]
%changed 
% We define a homotopy $H: S_V(I) \times [0,1] \to \bar{S}_V(I)$ by
% \[ H((x,t),s) =  ((1-se^{-b(x,t)})x,t)   \]
% For $s=0$ the homotopy $H$ restricts to $q$,
% and for $s=1$ it restricts to a homeomorphism.
\end{proof}

It follows from Proposition~\ref{prop:barS} that $\bar{S}_V$ is a suitable operad to use in place of $S_V$ for the operadic suspension $\Sigma^V\mathbf{P}$ of an operad of spectra (or pointed spaces).

We conclude this section with an observation about the barycentres of configurations in $\mathring{S}_V(I)$ that will be useful later.

\begin{lemma} \label{lem:barS}
Suppose $K, K' \subseteq I$ with $K \cap K' = \varnothing$. Then, for any $(x,t) \in \mathring{S}_V(I)$, we have
\[ |x_K| < t_K, \quad |x_K - x_{K'}| < \min\{t_K,t_{K'}\}. \]
\end{lemma}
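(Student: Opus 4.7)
The plan is to observe that both $x_K$ and $x_K - x_{K'}$ are convex combinations of the individual generators $x_i$ and differences $x_i - x_j$, respectively, and then to combine this with the strict inequalities assumed on the coordinates of $(x,t) \in \mathring{S}_V(I)$.

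For the first inequality, note that
\[ x_K = \sum_{k \in K} \frac{t_k}{t_K} x_k \]
is a convex combination since the weights $t_k/t_K$ are positive and sum to $1$. By the triangle inequality, $|x_K| \leq \max_{k \in K} |x_k|$. Since $(x,t) \in \mathring{S}_V(I)$, we have $|x_k| < t_k \leq t_K$ for each $k \in K$, and hence $|x_K| < t_K$.

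For the second inequality, a direct calculation (or Lemma~\ref{lem:btcalc} applied twice) gives
\[ x_K - x_{K'} = \frac{1}{t_K t_{K'}} \sum_{(k,k') \in K \times K'} t_k t_{k'} (x_k - x_{k'}), \]
and the coefficients $t_k t_{k'}/(t_K t_{K'})$ are positive and sum to $1$ (using $K \cap K' = \emptyset$ to keep the factorisation clean). Hence $x_K - x_{K'}$ is a convex combination of the vectors $x_k - x_{k'}$, so that
\[ |x_K - x_{K'}| \leq \max_{(k,k') \in K \times K'} |x_k - x_{k'}|. \]
Choosing $(k_0, k'_0)$ achieving the maximum, the defining inequalities for $\mathring{S}_V(I)$ give $|x_{k_0} - x_{k'_0}| < \min\{t_{k_0}, t_{k'_0}\}$, and since $t_{k_0} \leq t_K$ and $t_{k'_0} \leq t_{K'}$ we conclude $|x_K - x_{K'}| < \min\{t_K, t_{K'}\}$.

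There is no real obstacle here: the key observation is simply that weighted barycentres are convex combinations, so the triangle inequality transfers the coordinatewise bounds from the individual indices to arbitrary disjoint subsets. The only thing to double-check is that the scalar coefficients in the expression for $x_K - x_{K'}$ genuinely sum to $1$, which follows immediately from $\sum_{k \in K} t_k = t_K$ and $\sum_{k' \in K'} t_{k'} = t_{K'}$.
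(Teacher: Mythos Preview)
Your proof is correct and follows essentially the same approach as the paper: both express $x_K$ and $x_K - x_{K'}$ as convex combinations and apply the triangle inequality together with the defining strict bounds for $\mathring{S}_V(I)$. The only cosmetic difference is that you bound a convex combination by the maximum of its terms, whereas the paper keeps the weighted sum and uses $\sum_{k \in K} t_k^2 \leq t_K^2$; either route gives the same conclusion.
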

\begin{proof}
We have
\[ |x_K| = \left| \frac{\sum_{k \in K} t_kx_k}{\sum_{k \in K} t_k} \right| \leq \frac{\sum_{k \in K} t_k|x_k|}{\sum_{k \in K} t_k} < \frac{\sum_{k \in K} t_k^2}{\sum_{k \in K} t_k}  < \sum_{k \in K} t_k = t_K \]
and
\[ |x_K - x_{K'}| = \left| \frac{\sum_{k \in K} t_k x_k}{\sum_{k \in K} t_k} - \frac{\sum_{k' \in K} t_{k'}x_{k'}}{\sum_{k' \in K} t_{k'}} \right| \leq \frac{\sum_{k,k'} t_k t_{k'}|x_k - x_{k'}|}{\sum_{k,k'} t_k t_{k'}} < \frac{\sum_{k'} t_{k'} \sum_{k} t_k^2}{\sum_{k'} t_{k'} \sum_k t_k} < \sum_k t_k   \]
and similarly $|x_K - x_{K'}| < t_{K'}=t_K$.
\end{proof}

\section{Bar-cobar duality for operads} \label{sec:bar-cobar}

\begin{definition} \label{def:bar}
Let $\mathbf{P}$ be an operad of spectra in the sense of Definition~\ref{def:operad-spectra}. (Recall that, in this paper, all operads are automatically reduced.) The \emph{bar construction} on $\mathbf{P}$ is a cooperad of spectra, denoted $B\mathbf{P}$ that can be described (as a symmetric sequence) in a number of ways:
\begin{itemize}
  \item $B\mathbf{P}$ is the geometric realization of the (reduced) simplicial bar construction on the operad $\mathbf{P}$ considered as a monoid with respect to the composition product $\circ$ of symmetric sequences: that is
      \[ B\mathbf{P} \isom B(\mathbf{1},\mathbf{P},\mathbf{1}) := | \mathbf{1} \Leftarrow \mathbf{P} \Lleftarrow \mathbf{P} \circ \mathbf{P} \dots | \]
      where $\mathbf{1}$ denotes the trivial operad of spectra (which is given by $\mathbf{1}(I) = *$ for every finite set $I$ with $|I| \geq 2$) with a $(\mathbf{P},\mathbf{P})$-bimodule structure induced by the operad augmentation map $\mathbf{P} \to \mathbf{1}$;
  \item $B\mathbf{P}$ is the derived composition product
      \[ B\mathbf{P} \homeq \mathbf{1} \circ^{\mathbb{L}}_{\mathbf{P}} \mathbf{1} \]
      of the trivial operad with itself over $\mathbf{P}$;
  \item $B\mathbf{P}$ is a model for the termwise-suspension of the `derived indecomposables' of $\mathbf{P}$: that is
      \[ B\mathbf{P} \isom \Sigma ( W\mathbf{P}/\partial W\mathbf{P} )\]
      where $W\mathbf{P}$ denotes the Boardman-Vogt $W$-construction on $\mathbf{P}$ (a cofibrant replacement for the operad $\mathbf{P}$) and $\partial W\mathbf{P}$ denotes the sub-object of `decomposables' inside $W\mathbf{P}$, that is the combined image of all the composition maps
      \[ W\mathbf{P}(I) \times W\mathbf{P}(J) \to W\mathbf{P}(I \cup_i J); \]
  \item for each finite set $I$ with $|I| \geq 2$, we can describe $B\mathbf{P}(I)$ as a coend over a certain poset $\mathsf{Tree}_I$ of $I$-labelled trees (to be described in more detail in section~\ref{sec:map}) of the form
      \[ B\mathbf{P}(I) \isom \bar{w}(T) \smsh_{T \in \mathsf{Tree}_I} \mathbf{P}(T) \]
      where $\bar{w}(T)$ is the space of ways to assign non-negative real numbers to the non-leaf edges of $T$, with the limiting case where any edge is assigned $\infty$, or where the root is assigned $0$, identified to a single basepoint.
\end{itemize}
The authors independently constructed a cooperad structure on the symmetric sequence $B\mathbf{P}$, in \cite{salvatore:1999} and \cite{ching:2005} respectively. We recall the details of this structure in Definition~\ref{def:bar1} below.

Note also that all of the above descriptions, including the cooperad structure, apply equally well to the bar construction on an operad of pointed spaces (or of unpointed spaces with disjoint basepoints added).
\end{definition}

\begin{proposition} \label{prop:bar-cobar}
The bar construction of Definition~\ref{def:bar} determines a functor
\[ B: \mathsf{Op}(\spectra) \to \mathsf{Coop}(\spectra) \]
from the category of (reduced) operads of spectra to the category of cooperads. An entirely dual procedure determines the \emph{cobar construction}, a functor
\[ C: \mathsf{Coop}(\spectra) \to \mathsf{Op}(\spectra). \]
\end{proposition}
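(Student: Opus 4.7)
The plan is to verify both halves of the proposition by reducing everything to functoriality of a single explicit model, then invoking duality. I would work with the simplicial bar description
\[ B\mathbf{P} = |\,\mathbf{1} \Leftarrow \mathbf{P} \Lleftarrow \mathbf{P} \circ \mathbf{P} \Llleftarrow \cdots\,| \]
since the composition product $\circ$ of symmetric sequences is functorial in its arguments and the augmentation $\mathbf{P} \to \mathbf{1}$ is natural in $\mathbf{P}$. A map of operads $f: \mathbf{P} \to \mathbf{Q}$ therefore induces, in each simplicial degree $k$, a map $\mathbf{P}^{\circ k} \to \mathbf{Q}^{\circ k}$ compatible with all face and degeneracy maps; geometric realization then produces a map $Bf: B\mathbf{P} \to B\mathbf{Q}$ of symmetric sequences of spectra. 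Identity and composition are preserved by construction.

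Next I would check that $B\mathbf{P}$ really carries a cooperad structure (the detailed decomposition maps to be given in Definition~\ref{def:bar1}) and that $Bf$ respects it. The cleanest way is to pass to the coend-over-trees description
\[ B\mathbf{P}(I) \isom \bar{w}(T) \smsh_{T \in \mathsf{Tree}_I} \mathbf{P}(T), \]
where the decomposition map $\nu_i: B\mathbf{P}(I \cup_i J) \to B\mathbf{P}(I) \smsh B\mathbf{P}(J)$ is built by grafting trees along the edge $i$ and splitting the weight parameters appropriately. Functoriality of this formula in $\mathbf{P}$ is immediate because $\mathbf{P}(T) = \smash{\bigwedge_{v}} \mathbf{P}(\mathrm{in}(v))$ is manifestly functorial in $\mathbf{P}$, and the weight spaces $\bar{w}(T)$ do not involve $\mathbf{P}$. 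The compatibility of the three models for $B\mathbf{P}$ given in Definition~\ref{def:bar} then transports the cooperad structure back to $B(\mathbf{1},\mathbf{P},\mathbf{1})$. The cooperad axioms (coassociativity, counit, and symmetric group equivariance) are verified by a bookkeeping argument on the tree combinatorics, and the naturality in $f$ follows since grafting of trees involves only the data of $\mathbf{P}$ and symmetric sequence maps between them.

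For the cobar construction, I would dualize every step: replace the simplicial bar $B(\mathbf{1},\mathbf{P},\mathbf{1})$ by the cosimplicial cobar object $C(\mathbf{1},\mathbf{Q},\mathbf{1})$ whose $k$-simplices are $\mathbf{Q}^{\circ k}$, with coface maps induced by the cooperad cocomposition and the coaugmentation $\mathbf{1} \to \mathbf{Q}$, and take totalization in place of realization. Equivalently, use the tree coend description dualized to an end
\[ C\mathbf{Q}(I) \isom \Map\bigl(\bar{w}(T),\mathbf{Q}(T)\bigr)^{T \in \mathsf{Tree}_I}, \]
and define the operadic composition maps $\circ_i: C\mathbf{Q}(I) \smsh C\mathbf{Q}(J) \to C\mathbf{Q}(I \cup_i J)$ by grafting in exactly the manner dual to that used above for $B\mathbf{P}$. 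Functoriality and the operad axioms then follow from the same tree combinatorics that gave the cooperad axioms for $B$.

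The main obstacle is really the bookkeeping to check that grafting of weighted trees, as it appears in the coend description, defines decomposition maps satisfying the dual associativity diagrams of Definition~\ref{def:cooperad} (together with the symmetric group equivariance). This is where the detailed construction in Definition~\ref{def:bar1} does the work; everything else in Proposition~\ref{prop:bar-cobar} is a formal consequence of naturality of the ingredients $\circ$, the monoidal structure on $\spectra$, geometric realization (resp.\ totalization), and the coend (resp.\ end) over the tree category $\mathsf{Tree}_I$.
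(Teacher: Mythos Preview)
The paper does not actually supply a proof of this proposition; it is stated without argument, deferring to the authors' earlier work \cite{salvatore:1999} and \cite{ching:2005} for the construction of the cooperad structure on $B\mathbf{P}$, with the explicit decomposition maps recalled later in Definition~\ref{def:bar1}. Your outline is a correct sketch of the verification one would carry out, and it is fully consistent with the paper's framework: the tree-coend model you invoke is precisely the one used in Definition~\ref{def:bar1}, and your dual description of $C\mathbf{Q}$ as an end over $\mathsf{Tree}_I$ matches the paper's later Definition~\ref{def:cobar} exactly.
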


The duality between the bar and cobar constructions was described in \cite{ching:2012} where the following result was proved.

\begin{theorem} \label{thm:bar-cobar}
There is a Quillen equivalence
\[ \mathbb{B} : \mathsf{Op}(\spectra) \rightleftarrows \mathsf{QCoop}(\spectra) : \mathbb{C} \]
between Quillen model categories of (reduced) operads and \emph{quasi-}cooperads (a generalization of the notion of cooperad to be described in section~\ref{sec:quasi}) of spectra where:
\begin{itemize}
  \item in $\mathsf{Op}(\spectra)$, weak equivalences and fibrations are detected levelwise on the underlying spectra;
  \item each object in $\mathsf{QCoop}(\spectra)$ is weakly equivalent to a cooperad;
  \item for a cofibrant operad $\mathbf{P}$, the quasi-cooperad $\mathbb{B}\mathbf{P}$ is equivalent to the cooperad $B\mathbf{P}$ given by the bar construction on $\mathbf{P}$;
  \item the right adjoint $\mathbb{C}$ is an extension to quasi-cooperads of the cobar construction $C$.
\end{itemize}
\end{theorem}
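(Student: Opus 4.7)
Since the category of (genuine) cooperads is technically awkward—the bar construction only lands there up to coherent homotopy in the absence of strong enough comonoidal structure on the composition product—the first step is to enlarge the target so that $B$ strictifies to a functor valued in some richer category. My plan would be to introduce quasi-cooperads as the coalgebras for a comonad built out of the composition product of symmetric sequences, chosen so that every cooperad is a quasi-cooperad and so that the simplicial (or monadic) bar construction carries a canonical quasi-cooperad structure. Concretely, one writes $\mathbb{B}\mathbf{P} := B(\mathbf{1},\mathbf{P},\mathbf{1})$ as before, but now records, at the level of the two-sided bar resolution, the homotopy-coherent cooperadic decomposition maps; $\mathbb{C}$ is defined dually by a cobar totalization, so adjointness is built in by construction.

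Once the adjunction is in place, the next step is to transfer a model structure. On the operad side, one starts from the projective (levelwise) model structure on symmetric sequences of spectra and lifts to $\mathsf{Op}(\spectra)$ via the free-operad monad; cofibrancy conditions on the symmetric monoidal model category of spectra ensure that this transfer succeeds. On the quasi-cooperad side, one transfers along the forgetful functor to symmetric sequences in the other direction, so that weak equivalences are again detected levelwise. I would then check the Quillen adjunction conditions by showing that $\mathbb{C}$ preserves fibrations and acyclic fibrations, which reduces to the fact that cobar constructions are built out of limits and mapping spectra that are levelwise right Quillen.

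The heart of the argument is that the derived unit $\mathbf{P} \to \mathbb{C}\mathbb{B}\mathbf{P}$ is a weak equivalence on cofibrant operads. My approach would be to filter $\mathbb{C}\mathbb{B}\mathbf{P}$ by the simplicial filtration of the bar resolution and reduce to showing that the relevant totalization converges; this is precisely a derived double composition-product computation $\mathbf{P} \homeq \mathbf{P} \circ^{\mathbb{L}}_{\mathbf{P}} \mathbf{P}$, which holds for cofibrant $\mathbf{P}$. Dually, for the derived counit $\mathbb{B}\mathbb{C}\mathbf{Q} \to \mathbf{Q}$, I would use that every quasi-cooperad admits a canonical cofibrant quasi-cooperad resolution built from the cobar/bar iteration, so the statement reduces to the cosimplicial-simplicial Eilenberg--Zilber type identity $\mathbf{1} \circ^{\mathbb{L}}_{\mathbf{P}} \mathbf{1} \circ^{\mathbb{L}}_{\mathbf{P}} \mathbf{P} \homeq \mathbf{1} \circ^{\mathbb{L}}_{\mathbf{P}} \mathbf{P} \homeq \mathbf{1}$.

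The main obstacle, and the reason one must pass to quasi-cooperads at all, is the failure of the composition product $\circ$ to be symmetric monoidal: the natural map $(\mathbf{A} \circ \mathbf{B}) \circ (\mathbf{A} \circ \mathbf{B}) \to (\mathbf{A} \circ \mathbf{A}) \circ (\mathbf{B} \circ \mathbf{B})$ one would like to use to give $B\mathbf{P}$ a strict cooperad structure does not exist. Any honest proof must either manufacture a replacement via $\infty$-categorical machinery or carry out a careful combinatorial construction that records, on $\mathbb{B}\mathbf{P}$, just enough decomposition data to serve as a quasi-cooperad. Verifying that this enlarged structure is both rigid enough to give a Quillen equivalence and flexible enough that every object is weakly equivalent to an honest cooperad is, in my view, the decisive technical hurdle, and is what justifies isolating the whole construction as a separate definition as the authors do in Section~\ref{sec:quasi}.
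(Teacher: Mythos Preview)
This theorem is not proved in the present paper; it is quoted from \cite{ching:2012}, and the paper only \emph{recalls} the relevant definitions in Section~\ref{sec:quasi}. So there is no ``paper's own proof'' to compare against here. That said, your sketch diverges in a substantive way from the construction that \cite{ching:2012} actually uses, and since the paper does spell out those definitions it is worth flagging the mismatch.

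Your proposal models quasi-cooperads as coalgebras for a comonad built from the composition product, with $\mathbb{B}\mathbf{P}$ taken to be the simplicial bar construction equipped with homotopy-coherent decomposition data. The actual definition (Definition~\ref{def:q-coop}) is tree-based: a quasi-cooperad is a functor $\mathbf{Q}:\mathsf{Tree}\to\spectra$ together with grafting maps $\mathbf{Q}(T)\smsh\mathbf{Q}(T')\to\mathbf{Q}(T\cup_i T')$ required to be weak equivalences, and $\mathbb{B}\mathbf{P}(T)$ is the explicit coend $\bar{w}(T;U)\smsh_{U\in\mathsf{Tree}_I}\mathbf{P}(U)$ of Definition~\ref{def:BP}, not a monadic bar resolution. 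The right adjoint $\mathbb{C}$ is likewise the tree-based end of Definition~\ref{def:cobar}. The point is that the ``extra structure'' recorded is not a homotopy-coherent comultiplication on a single object $B\mathbf{P}(I)$, but rather a whole family of objects indexed by trees, with strict grafting maps between them; the homotopy coherence is absorbed into the shape of the indexing diagram rather than into higher coherence data on a comonad coalgebra.

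Your instinct about \emph{why} one needs to enlarge the target category is correct, and your identification of the obstacle (failure of $\circ$ to distribute over itself) is on the mark. But the specific mechanism you propose---comonadic coalgebras, model structure transferred along a forgetful functor to symmetric sequences, unit/counit arguments via $\mathbf{P}\circ^{\mathbb{L}}_{\mathbf{P}}\mathbf{P}$---is not what is carried out in \cite{ching:2012}, and it is not clear your comonadic picture can be made to work without essentially rediscovering the tree formalism. In particular, the model structure on $\mathsf{QCoop}(\spectra)$ in \cite{ching:2012} is not a straightforward transfer; weak equivalences of pre-cooperads are not simply levelwise, and some care is needed. If you want to fill in this theorem, you should read \cite{ching:2012} directly rather than reconstruct it from the bullet points here.
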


The bar-cobar duality can also be described purely in terms of operads using the following definition.

\begin{definition} \label{def:kosuzl}
Let $\mathbf{P}$ be an operad of spectra. We say that $\mathbf{P}$ is \emph{termwise-finite} if each spectrum $\mathbf{P}(I)$ is equivalent to a finite CW-spectrum. In this case, we define the \emph{(derived) Koszul dual} of $\mathbf{P}$ to be the operad of spectra $K\mathbf{P}$ given by
\[ (K\mathbf{P})(I) := \Map(B\mathbf{P}'(I),\mathbf{S}) \]
where $\mathbf{P}'$ is a cofibrant replacement of $\mathbf{P}$ in $\mathsf{Op}(\spectra)$, $\mathbf{S}$ is the sphere spectrum, and $\Map(-,-)$ denotes the mapping spectrum construction. The operad composition maps for $K\mathbf{P}$ are induced by the cooperad structure on $B\mathbf{P}'$.
\end{definition}

\begin{remark}
Although referred to as the `Koszul' dual, the operad $K\mathbf{P}$ is better viewed as the analogue of Ginzburg-Kapranov's `dg-dual'~\cite[\S3]{ginzburg/kapranov:1994} since its construction and properties do not depend on any `Koszulity' property of the operad $\mathbf{P}$.
\end{remark}

\begin{example}
Let $\mathbf{Com}$ be the commutative operad of spectra, given by $\mathbf{Com}(I) = \mathbf{S}$, the sphere spectrum, for all $I$. Then it is shown in~\cite{ching:2005} that $K\mathbf{Com}$ is the spectral Lie-operad, a model for the Goodwillie derivatives of the identity functor on pointed spaces.
\end{example}

The following analogue to \cite[3.2.16]{ginzburg/kapranov:1994} is deduced from Theorem~\ref{thm:bar-cobar} in~\cite[4.11]{ching:2012}

\begin{theorem} \label{thm:koszul}
Let $\mathbf{P}$ be a termwise-finite operad of spectra. Then there is an equivalence of operads
\[ KK\mathbf{P} \homeq \mathbf{P}. \]
\end{theorem}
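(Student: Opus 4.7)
The strategy is to reduce $KK\mathbf{P} \homeq \mathbf{P}$ to a combination of the bar-cobar Quillen equivalence of Theorem~\ref{thm:bar-cobar} and Spanier-Whitehead self-duality for finite spectra. Write $\mathbb{D}X := \Map(X,\mathbf{S})$ for the Spanier-Whitehead dual, which carries operads to cooperads and vice versa (reversing the direction of all structure maps). The first step is to choose a cofibrant replacement $\mathbf{P}' \weq \mathbf{P}$. Using the coend description of $B\mathbf{P}'(I)$ from Definition~\ref{def:bar} over the finite poset $\mathsf{Tree}_I$, together with the fact that the terms $\bar{w}(T)$ are finite CW-complexes and the termwise finiteness of $\mathbf{P}'$ (inherited from $\mathbf{P}$), an induction on the cardinality of $I$ shows that each spectrum $B\mathbf{P}'(I)$ is equivalent to a finite CW-spectrum. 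Hence $K\mathbf{P} = \mathbb{D}(B\mathbf{P}')$ is again a termwise-finite operad, and termwise the double-dual map $B\mathbf{P}' \to \mathbb{D}\mathbb{D}(B\mathbf{P}')$ is an equivalence.

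The second step is a duality lemma: for a cooperad $\mathbf{Q}$ of termwise-finite spectra, Spanier-Whitehead duality intertwines bar and cobar, giving a natural equivalence of operads
\[ \mathbb{D}\bigl(B(\mathbb{D}\mathbf{Q})\bigr) \homeq C(\mathbf{Q}). \]
This is essentially formal from the coend description of $B$ and the end description of $C$, using that $\mathbb{D}$ converts smash products of finite spectra into mapping spectra and converts coends into ends, together with the compatibility of $\mathbb{D}$ with the tree-indexed combinatorics underlying the two constructions.

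With these two ingredients in hand the proof is a concatenation of equivalences. Let $(K\mathbf{P})^{\mathrm{cof}}$ be a cofibrant replacement of $K\mathbf{P}$, so that $KK\mathbf{P} = \mathbb{D}\bigl(B(K\mathbf{P})^{\mathrm{cof}}\bigr)$. Since $B$ is a homotopy functor on (cofibrant) operads, this is equivalent to $\mathbb{D}B(K\mathbf{P}) = \mathbb{D}B\mathbb{D}(B\mathbf{P}')$. Applying the duality lemma with $\mathbf{Q} = B\mathbf{P}'$ yields $\mathbb{D}B\mathbb{D}(B\mathbf{P}') \homeq C(B\mathbf{P}')$. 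Finally, Theorem~\ref{thm:bar-cobar} tells us that the derived unit of the bar-cobar Quillen equivalence, applied to the cofibrant operad $\mathbf{P}'$, is a weak equivalence $\mathbf{P}' \weq \mathbb{C}\mathbb{B}\mathbf{P}' \homeq C(B\mathbf{P}')$. Chaining these equivalences gives $KK\mathbf{P} \homeq \mathbf{P}' \homeq \mathbf{P}$, as required.

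The main obstacle is the bookkeeping needed to ensure that every intermediate object has the right cofibrancy or fibrancy so that the underived bar and cobar constructions agree with their derived counterparts, and that the termwise finiteness propagates through the bar construction so that S-duality is actually an involution on the relevant spectra. Establishing the bar-cobar duality lemma rigorously, in a form compatible with the cooperad structure rather than merely at the level of underlying symmetric sequences, is the most technical part; this is precisely the content of the arguments referenced from \cite{ching:2012}, where both the Quillen equivalence and the present corollary are proved.
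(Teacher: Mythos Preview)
The paper does not give its own proof of this theorem: it is stated with the sentence ``deduced from Theorem~\ref{thm:bar-cobar} in~\cite[4.11]{ching:2012}'' and nothing more. Your outline is a faithful sketch of exactly that argument from \cite{ching:2012}: pass to a cofibrant replacement, use termwise finiteness to see that $B\mathbf{P}'$ is again termwise finite so that Spanier-Whitehead duality is involutive on it, identify $\mathbb{D}B\mathbb{D}$ with the cobar construction, and then invoke the bar-cobar Quillen equivalence to return to $\mathbf{P}$. You also correctly flag, in your last paragraph, the one place where real work is needed, namely promoting the duality between $B$ and $C$ from a levelwise statement about symmetric sequences to a statement about (quasi-)cooperad structures; that is precisely what the machinery of \cite{ching:2012} is set up to handle. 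So there is nothing to correct: your proposal and the paper's (cited) proof coincide.
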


We can now state precisely the main result of this paper: an identification of the Koszul dual of the \emph{stable} little-disc operad.

\begin{definition}
Let $V$ be a finite-dimensional real normed vector space, and $E_V$ the little $V$-discs operad of Definition~\ref{prop:E}. Let $\mathbf{E}_V$ be the $O(V)$-operad of spectra given by
\[ \mathbf{E}_V := \Sigma^\infty_+ E_V. \]
\end{definition}

\begin{theorem} \label{thm:main}
Let $V$ be a finite-dimensional real normed vector space. Then there is an $O(V)$-equivariant equivalence of operads of spectra
\[ K\mathbf{E}_V \homeq \Sigma^{-V}\mathbf{E}_V. \]
\end{theorem}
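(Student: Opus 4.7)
The plan is to work with two equivalent models for $E_V$: the Fulton-MacPherson operad $F_V$ on the ``covariant'' side, and the restricted little discs operad $D_V$ on the ``contravariant'' side. By Theorem~\ref{thm:D} and the cofibrancy of $F_V$, we have $K\mathbf{E}_V \homeq K(\Sigma^\infty_+ F_V)$ and $\Sigma^{-V}\mathbf{E}_V \homeq \Sigma^{-V}(\Sigma^\infty_+ D_V)$, both $O(V)$-equivariantly. The key reduction is to produce a morphism of operads of spectra
\[ \Phi: \Sigma^\infty_+ F_V \to \Map(B(\Sigma^\infty_+ D_V), \Sigma^\infty \bar{S}_V) \]
and then show it is a levelwise equivalence; the right-hand side is a model for the desuspension $\Sigma^{-V}(\Sigma^\infty_+ D_V)$ since $S_V \weq \bar{S}_V$ by Proposition~\ref{prop:barS} and since $D_V \homeq E_V$. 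Exhibiting $\Phi$ as an equivalence will immediately give Theorem~\ref{thm:main} via the self-duality/Koszul symmetry (Theorem~\ref{thm:koszul}) applied to both sides.

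The crux is to construct $\Phi$ by adjunction from an unstable pairing
\[ \mu_I: F_V(I)_+ \smsh BD_V(I) \to \bar{S}_V(I) \]
that strictly respects the operad structure on $F_V$ (Definition~\ref{def:FM-operad}) and the bar-cooperad structure on $BD_V$ (Definition~\ref{def:bar}). Here the barycentre operad $R_V$ of Proposition~\ref{prop:R} and its quotient $\bar{S}_V$ of Definition~\ref{def:barS} play the central role: both $D_V$ and (with the canonical representatives of Remark~\ref{rem:FM}) the component configurations of $F_V$ sit inside $R_V$, and the sphere operad $\bar{S}_V$ provides a target where the nested configurations of a Fulton-MacPherson point can be ``evaluated'' against the tree-indexed formula for the bar construction. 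Concretely, a point of $BD_V(I)$ is represented by a tree $T$ labelled with discs at each vertex together with non-negative edge-lengths; paired with a Fulton-MacPherson point $y \in F_V(I)$ indexed by the same (or a refinement of the same) tree, the recursively-built element in $R_V(I)$ given by inserting rescaled copies along $T$ provides the output, and lies in $\bar{S}_V(I)$ under the ``star-shaped neighbourhood'' estimates supplied by Lemma~\ref{lem:barS} and Lemma~\ref{lem:FM-norm}. Verifying that this pairing is continuous across the boundary strata of $F_V$ and that it intertwines the $\circ_i$ for $F_V$ with the $\nu_i$ for $BD_V$ is where the barycentre/weight formalism of Lemma~\ref{lem:btcalc} is used systematically.

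Having built $\Phi$, the next step is to reduce the equivalence claim to a statement about configuration spaces via bar-cobar duality. By Theorem~\ref{thm:bar-cobar}, for the cofibrant $F_V$, the quasi-cooperad $\mathbb{B}(\Sigma^\infty_+ F_V)$ models the bar construction $B(\Sigma^\infty_+ F_V)$, and passing $\Phi$ through the adjunction rewrites it as a map of quasi-cooperads
\[ \Psi: \Sigma^\infty \mathbb{B}F_V \to \Map(\Sigma^\infty_+ D_V, \Sigma^\infty \bar{S}_V). \]
One then identifies $\mathbb{B}F_V(I)$, termwise, with the one-point compactification of the configuration space $\mathring{F}_V(I)$ of $|I|$ points in $V$ modulo translation and positive scaling, as an open subset of the vector space $V^I/V$ of dimension $\dim(V) \cdot (|I|-1)$. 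Together with the equivalence $D_V(I) \homeq C_V(I)$ from Proposition~\ref{prop:D}, this puts us in exactly the setting of the Dold--Puppe S-duality theorem for open subsets of Euclidean space. At each level $I$, the map $\Psi_I$ is precisely the duality pairing of that theorem (up to the identification $\bar{S}_V(I) \homeq S^{V^I/V}$ of Proposition~\ref{prop:barS}), and hence is an equivalence of spectra; naturality in the operad/cooperad structure gives the required equivalence of operads.

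The hard part will be the second paragraph: constructing $\mu_I$ as an honest map of operads/cooperads, not merely up to homotopy. As noted by the authors, pointwise S-duality equivalences between $\mathbb{B}F_V(I)$ and $D_V(I)_+$ have been known since the second author's thesis; the obstacle has been to assemble them into strictly compatible operad maps. The role of the barycentre (co)operad $R_V$ is essential here: it is the ambient object housing simultaneously the discs of $D_V$, the nested configurations in $F_V$, and the spheres in $\bar{S}_V$, and its strict $GL(V)$-equivariant operad structure on homeomorphisms (Proposition~\ref{prop:R}) is what allows the tree-indexed formulas to commute on the nose with $\circ_i$ and $\nu_i$. The $O(V)$-equivariance of the final equivalence then follows from the fact that every object in sight ($F_V$, $D_V$, $R_V$, $\bar{S}_V$, and the pairing $\mu_I$) is built $GL(V)$-equivariantly out of the vector space $V$ and its norm.
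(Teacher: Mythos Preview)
Your outline is essentially the paper's own proof: construct the unstable pairing $F_V(I)_+ \smsh BD_V(I) \to \bar{S}_V(I)$ (the paper's $\alpha_I$, Definition~\ref{def:alpha}), verify it respects the operad and bar-cooperad structures (the hard technical Lemma~\ref{lem:alpha-operad}), pass via the $(\mathbb{B},\mathbb{C})$-adjunction to a map of quasi-cooperads $\Sigma^\infty \mathbb{B}F_V \to \Map(D_V,\Sigma^\infty \bar{S}_V)$, and check this is an equivalence on corollas via Dold--Puppe. Two small corrections: the target $\Map(BD_V,\Sigma^\infty \bar{S}_V)$ is not a model for $\Sigma^{-V}\mathbf{D}_V$ but rather for $\Sigma^{V}K\mathbf{D}_V$, so once $\Phi$ is an equivalence you simply desuspend both sides of $\mathbf{E}_V \homeq \Sigma^{V}K\mathbf{E}_V$ (no appeal to Theorem~\ref{thm:koszul} is needed); and $\mathbb{B}F_V(I)$ is not the one-point compactification of $\mathring{F}_V(I)$ (configurations modulo translation \emph{and} scaling, which has the wrong dimension to sit open in $V^I/V$) but its suspension, i.e.\ the compactification of configurations modulo translation only (Lemma~\ref{lem:phi}) --- a point that matters when you apply Dold--Puppe.
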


\section{The duality map} \label{sec:map}

Our goal in this section is to build the map of operads that underlies our proof of Theorem~\ref{thm:main}. The basic construction is via a collection of $O(V)$-equivariant maps (of pointed spaces)
\begin{equation} \label{eq:map} F_V(I)_+ \smsh BD_V(I) \to \bar{S}_V(I) \end{equation}
where $F_V$ is the Fulton-MacPherson operad of Definition~\ref{def:FM}, $D_V$ is the restricted little disc operad of Definition~\ref{def:D} (with $BD_V$ its bar-cooperad) and $\bar{S}_V$ is the quotient of the $V$-sphere operad given in Definition~\ref{def:barS}. By adjunction, the map (\ref{eq:map}) determines a map of spectra
\[ \Sigma^\infty F_V(I)_+ \to \Map(BD_V(I),\Sigma^\infty \bar{S}_V(I)) \]
which together form a map of operads
\[ \Sigma^\infty F_V \to \Map(BD_V,\Sigma^\infty \bar{S}_V) \]
which we will show to be an equivalence. Theorem~\ref{thm:main} then follows via the following zigzag of equivalences of operads
\[ \begin{split} K\mathbf{E}_V &\weq K\Sigma^\infty_+ D_V \\
     &\weq \Sigma^{-V}\Map(BD_V,\Sigma^\infty S_V) \\
     &\weq \Sigma^{-V}\Map(BD_V,\Sigma^\infty \bar{S}_V) \\
     &\lweq \Sigma^{-V}\Sigma^\infty_+ F_V \\
     &\weq \Sigma^{-V}\mathbf{E}_V. \end{split} \]

In order to describe the map (\ref{eq:map}), we need to be more explicit about the definition of the bar construction $BD_V$. As previewed in~\ref{def:bar}, one way to do this is in terms of certain posets of rooted trees, which we now introduce in more detail.

A \emph{rooted tree} is a contractible one-dimensional finite cell complex with a chosen vertex (the \emph{root}). The choice of root determines an orientation on each edge: towards the root. Each vertex (apart from the root) has a unique outgoing edge, and a possibly empty set of incoming edges. Vertices with no incoming edges are \emph{leaves}, and we are concerned with trees in which the leaves are labelled by the elements of a given finite set $I$. For example, the following diagram illustrates a tree with leaves labelled by the set $\{1,2,3,4,5\}$.
\begin{center}
\begin{tikzpicture}
	\begin{pgfonlayer}{nodelayer}
		\node [style=none] (0) at (0, 0) {$\bullet$};
		\node [style=none] (1) at (-1, 1) {$\bullet$};
		\node [style=none] (2) at (-2, 2) {$\bullet$};
		\node [style=none] (3) at (-1, 2) {$\bullet$};
		\node [style=none] (4) at (0, 2) {$\bullet$};
		\node [style=none] (5) at (1, 2) {$\bullet$};
		\node [style=none] (6) at (2, 2) {$\bullet$};
		\node [style=none] (7) at (1, 1) {$\bullet$};
		\node [style=none] (8) at (0,-1) {$\bullet$};
		\node [style=none] (20) at (0, -1.5) {root};
		\node [style=none] (21) at (-2, 2.5) {$1$};
		\node [style=none] (22) at (-1, 2.5) {$2$};
		\node [style=none] (23) at (0, 2.5) {$3$};
		\node [style=none] (24) at (1, 2.5) {$4$};
		\node [style=none] (25) at (2, 2.5) {$5$};
	\end{pgfonlayer}
	\begin{pgfonlayer}{edgelayer}
		\draw[-{>[scale=1.5]}] (-1,1) to (-0.5,0.5);
		\draw[-{>[scale=1.5]}] (-2,2) to (-1.5,1.5);
		\draw[-{>[scale=1.5]}] (-1,2) to (-1,1.5);
		\draw[-{>[scale=1.5]}] (0,2) to (-0.5,1.5);
		\draw[-{>[scale=1.5]}] (1,1) to (0.5,0.5);
		\draw[-{>[scale=1.5]}] (1,2) to (1,1.5);
		\draw[-{>[scale=1.5]}] (2,2) to (1.5,1.5);
		\draw[-{>[scale=1.5]}] (0,0) to (0,-0.5);
		\draw (2.center) to (1.center);
		\draw (1.center) to (0.center);
		\draw (3.center) to (1.center);
		\draw (4.center) to (1.center);
		\draw (7.center) to (0.center);
		\draw (5.center) to (7.center);
		\draw (6.center) to (7.center);
		\draw (0.center) to (8.center);
	\end{pgfonlayer}
\end{tikzpicture}
\end{center}

We make two further assumptions: that the root vertex has exactly one incoming edge (the \emph{root edge}), and that no other vertex has exactly one incoming edge. It follows that each edge of a tree corresponds uniquely to a certain subset of the labelling set $I$, namely the set of leaves whose paths to the root go via that edge. For example, the eight edges in the tree above correspond to the following subsets of $\{1,2,3,4,5\}$:
\begin{itemize}
    \item the five singletons (the leaf edges);
    \item $\{1,2,3\}$ and $\{4,5\}$ (the internal edges);
    \item the whole set $\{1,2,3,4,5\}$ (the root edge).
\end{itemize}
Moreover, notice that this collection of subsets uniquely determines the structure of the tree. In the rest of this paper, it will be convenient if we \emph{define} trees via that collection of subsets of $I$. We therefore make the following definition.

\begin{definition} \label{def:tree}
Let $I$ be a finite set with $|I| \geq 2$. An \emph{$I$-labelled tree} is a collection $T$ of nonempty subsets of $I$ with the following properties:
\begin{itemize}
  \item each singleton $\{i\}$ is in $T$;
  \item the set $I$ itself is in $T$;
  \item if $e,e' \in  T$, then either $e \subseteq e'$, $e' \subseteq e$, or $e \cap e' = \varnothing$.
\end{itemize}
We refer to the elements of $T$ as the \emph{edges} of the tree $T$. (In this formulation we will not usually refer to the \emph{vertices} of a tree.) The singleton subsets are the \emph{leaves} and the set $I$ itself is the \emph{root}. Each non-root edge $e$ has a unique \emph{outgoing edge} $e'$ that is minimal subject to the condition $e \subsetneq e'$. Each non-leaf edge $e$ has at least two \emph{incoming edges}, i.e.\ those for which $e$ is the outgoing edge. We will denote by $E(T)$ the set of all non-leaf edges of $T$.

A \emph{morphism} from an $I$-labelled tree $T$ to an $I'$-labelled tree $T'$ consists of a bijection $f: I \isom I'$ with the property that for each edge $e \in T$, $f(e) \in T'$. For example, if $f$ is the identity function on a set $I$, then $f$ is a morphism $T \to T'$ if and only if $T \subseteq T'$ (as sets of subsets of $I$). We often therefore visualize such a morphism as given by the insertion of a collection of internal (i.e.\ non-root/leaf) edges.
\end{definition}

\begin{proposition}
There is a small category $\mathsf{Tree}$ whose objects are all the trees in the sense of Definition~\ref{def:tree} and whose morphisms are those defined above. Composition is given by composition of bijections, and the identity morphism on $T$ is the identity bijection on the set of labels of $T$. For each finite set $I$ with $|I| \geq 2$, let $\mathsf{Tree}_I$ denote the subcategory of $\mathsf{Tree}$ whose objects are the $I$-labelled trees, and whose morphisms are those morphisms of $\mathsf{Tree}$ whose underlying bijection is the identity on $I$. In other words, $\mathsf{Tree}_I$ is the poset of $I$-labelled trees with ordering given by inclusion of subsets.
\end{proposition}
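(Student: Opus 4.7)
The plan is to verify the category axioms directly from Definition~\ref{def:tree}, with essentially no substantive mathematical content; the whole proposition is a bookkeeping exercise.

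First I would check that the morphisms of $\mathsf{Tree}$ compose. Given morphisms $f : T \to T'$ (over a bijection $I \isom I'$) and $g : T' \to T''$ (over a bijection $I' \isom I''$), the composite $g \circ f : I \isom I''$ is certainly a bijection, and for any edge $e \in T$ we have $f(e) \in T'$ by hypothesis, hence $g(f(e)) = (g \circ f)(e) \in T''$ by hypothesis on $g$. So $g \circ f$ is a morphism $T \to T''$. The identity bijection $I \isom I$ clearly sends each $e \in T$ to $e \in T$, hence is a morphism $T \to T$; it is a two-sided unit for composition because this is already true at the level of the underlying bijections. Associativity is inherited from associativity of composition in the category of finite sets and bijections. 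This gives the category $\mathsf{Tree}$.

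Next, for a fixed finite set $I$ with $|I| \geq 2$, I would consider the subcategory $\mathsf{Tree}_I$. Its objects are the $I$-labelled trees; its morphisms are the morphisms of $\mathsf{Tree}$ whose underlying bijection is $\mathrm{id}_I$. Since the underlying bijection is fixed, there is at most one morphism between any two objects, so $\mathsf{Tree}_I$ is a preorder. The condition for the identity bijection to underlie a morphism $T \to T'$ is precisely that $e \in T'$ for every $e \in T$, i.e.\ that $T \subseteq T'$ as collections of subsets of $I$. Reflexivity and transitivity of $\subseteq$ yield identities and composites in $\mathsf{Tree}_I$; antisymmetry of $\subseteq$ shows the preorder is a poset.

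No single step is hard: the only thing worth emphasizing is that, because an $I$-labelled tree has been encoded in Definition~\ref{def:tree} as a set of subsets of $I$ (rather than, say, an abstract combinatorial object with a separate labelling), the relation ``there is a morphism $T \to T'$ in $\mathsf{Tree}_I$'' really does become the set-theoretic inclusion $T \subseteq T'$, which is why the second description of $\mathsf{Tree}_I$ as the inclusion poset is immediate rather than requiring any identification.
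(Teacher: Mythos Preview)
Your proposal is correct; the paper states this proposition without proof, treating the category axioms as immediate from Definition~\ref{def:tree}. Your routine verification supplies exactly the omitted details and matches the intended approach.
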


\begin{example}
For a nonempty finite set $I$, the \emph{$I$-labelled corolla} is the tree $\tau_I$ consisting only of the singletons and the set $I$ itself.
\end{example}

\begin{definition} \label{def:grafting}
Let $T$ be an $I$-labelled tree and $T'$ a $J$-labelled tree, and take $i \in I$. We then define an $I \cup_i J$-labelled tree $T \cup_i T'$ by saying that the edges in $T \cup_i T'$ are those of one of the following forms:
\begin{itemize}
  \item an edge in $T'$;
  \item an edge in $T$ that does not contain $i$;
  \item $e \cup_i J$ where $e$ is an edge in $T$ that does contain $i$.
\end{itemize}
Note that the edge $J = \{i\} \cup_i J$ is covered twice by these conditions; put another way, we can think of the edges of $T \cup_i T'$ as comprising the edges of $T$ and the edges of $T'$, with the root of $T'$ identified with the leaf $\{i\}$ of $T$.
\end{definition}

\begin{definition} \label{def:w}
Let $T$ be an $I$-labelled tree. We let $\bar{w}(T)$ be the pointed space given by the quotient of the space
\[ [0,\infty]^{E(T)} \]
by the subspace consisting of those sequences $r = (r_e)_{e \in E(T)}$ for which:
\begin{itemize}
  \item $r_e = \infty$ for some $e \in E(T)$; or
  \item $r_I = 0$.
\end{itemize}
For a morphism of $I$-labelled trees given by an inclusion $\iota: T \subseteq T'$, we have a map of pointed spaces $\iota_*: \bar{w}(T) \to \bar{w}(T')$, given by setting $r_e = 0$ for $e \in T' \setminus T$. For an $I$-labelled tree $T$, $i \in I$, and $J$-labelled tree $T'$, we have a map of pointed spaces
\[ \circ^i: \bar{w}(T \cup_i T') \to \bar{w}(T) \smsh \bar{w}(T') \]
given by the identification of each non-leaf edge in $T \cup_i T'$ with a non-leaf edge either in $T$ or $T'$.
\end{definition}

\begin{prop} \label{prop:w}
For each finite set $I$, the maps $\iota_*$ described in Definition~\ref{def:w} together form a functor $\bar{w}: \mathsf{Tree}_I \to \based$. The maps $\circ^i$ are natural with respect to $T \in \mathsf{Tree}_I$ and $T' \in \mathsf{Tree}_J$, and also associative with respect to multiple grafting maps.
\end{prop}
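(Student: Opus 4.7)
The plan is to verify each of the claimed properties by direct unwinding of the combinatorial definitions; no novel ideas are required, only careful bookkeeping of edges under inclusions and grafting.

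For the functoriality of $\bar{w}$, I would first check that each $\iota_*$ is well-defined on the quotient. The map $[0,\infty]^{E(T)} \to [0,\infty]^{E(T')}$ that extends by $r_e = 0$ for $e \in E(T') \setminus E(T)$ clearly sends any tuple with some coordinate equal to $\infty$ to another such tuple, so it respects the first collapsing condition. For the second condition, observe that any inclusion $T \subseteq T'$ of $I$-labelled trees preserves the root $I$, so $r_I = 0$ in the source implies $r_I = 0$ in the target. Functoriality with respect to compositions $T \subseteq T' \subseteq T''$ then follows because extending by zero on $E(T'') \setminus E(T)$ is the same as first extending by zero on $E(T') \setminus E(T)$ and then on $E(T'') \setminus E(T')$; the identity case is immediate.

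For naturality of $\circ^i$, consider inclusions $T \subseteq T_1$ in $\mathsf{Tree}_I$ and $T' \subseteq T'_1$ in $\mathsf{Tree}_J$. These induce an inclusion $T \cup_i T' \subseteq T_1 \cup_i T'_1$ in $\mathsf{Tree}_{I \cup_i J}$, by Definition~\ref{def:grafting}. The key observation is that each non-leaf edge of any grafted tree $T \cup_i T'$ comes uniquely either from $T'$ (if it is contained in $J$) or from $T$ (otherwise, with $i$-containing edges in $T$ reindexed by $\cup_i J$), and this bijection is compatible with inclusions. Thus the two routes around the naturality square — $\circ^i$ followed by extending by zero in each factor, versus first extending by zero and then applying $\circ^i$ — both perform the same splitting of the new zero entries into the $T_1$-part and the $T'_1$-part.

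For associativity, fix $T \in \mathsf{Tree}_I$, $T' \in \mathsf{Tree}_J$, $T'' \in \mathsf{Tree}_K$, with $i \in I$ and $j \in J$. By Definition~\ref{def:grafting}, the non-leaf edges of $(T \cup_i T') \cup_j T''$ partition uniquely into non-leaf edges of $T$, $T'$, and $T''$, and this partition coincides with the one obtained from $T \cup_i (T' \cup_j T'')$. Hence the two iterated decomposition maps $\bar{w}\bigl((T \cup_i T') \cup_j T''\bigr) \to \bar{w}(T) \smsh \bar{w}(T') \smsh \bar{w}(T'')$ agree, since each is the evident rebracketing of the coordinates of $[0,\infty]^{E(T)} \times [0,\infty]^{E(T')} \times [0,\infty]^{E(T'')}$. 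A parallel check handles grafting at distinct inputs. The only point requiring attention is that each intermediate decomposition sends the basepoint to the basepoint, which follows because if any coordinate on the left is $\infty$, that same coordinate appears either in the $T$-part or in the $T'$-part after splitting, collapsing the result to the basepoint of the corresponding smash factor.
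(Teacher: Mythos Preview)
Your proposal is correct. The paper states this proposition without proof, treating it as a routine consequence of the definitions, and your verification is precisely the kind of straightforward bookkeeping the authors leave to the reader: checking that extension-by-zero respects the two collapsing conditions and is functorial, and then using the canonical bijection between $E(T \cup_i T')$ and $E(T) \amalg E(T')$ to read off naturality and associativity of $\circ^i$.
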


\begin{definition} \label{def:bar1}
Let $P$ be an operad of either pointed spaces or spectra. For an $I$-labelled tree $T$, we define
\[ P(T) := \Smsh_{e \in E(T)} P(I_e) \]
where $I_e$ denotes the set of incoming edges of $T$ to a non-leaf edge $e$. The composition maps for $P$ determine a functor $P(-): \mathsf{Tree}_I^{op} \to \based$.

The \emph{bar-cooperad} $BP$ is then given by the coend
\[ BP(I) := \bar{w}(T) \smsh_{T \in \mathsf{Tree}_I} P(T) \]
with cooperad structure maps $BP(I \cup_i J) \to BP(I) \smsh BP(J)$ induced by the maps $\circ^i$ of Definition~\ref{def:w} together with the isomorphism
\[ P(T \cup_i T') \isom P(T) \smsh P(T') \]
given again by identifying non-leaf edges of $T \cup_i T'$ with those of $T$ and $T'$.
\end{definition}

\begin{prop}[\cite{salvatore:1999},\cite{ching:2005}]
Let $P$ be an operad of pointed spaces or spectra. The structure maps of Definition~\ref{def:bar1} make $BP$ into a cooperad of pointed spaces or spectra respectively. We refer to $BP$ as the \emph{bar-cooperad} of $P$.
\end{prop}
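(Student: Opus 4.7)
The plan is to construct the decomposition maps $\nu_i \colon BP(I \cup_i J) \to BP(I) \smsh BP(J)$ explicitly on the coend description of Definition~\ref{def:bar1} and then verify the cooperad axioms dual to Definition~\ref{def:operad}. The key combinatorial input is that the grafting operation of Definition~\ref{def:grafting} gives a bijection between pairs $(T', T'') \in \mathsf{Tree}_I \times \mathsf{Tree}_J$ and those trees $T \in \mathsf{Tree}_{I \cup_i J}$ that contain $J$ as an edge, via $(T', T'') \mapsto T' \cup_i T''$. For such a $T$, both $\bar{w}(T)$ and $P(T)$ split canonically: the former by the map $\circ^i$ of Definition~\ref{def:w}, and the latter by the isomorphism induced from the bijection $E(T' \cup_i T'') \isom E(T') \sqcup E(T'')$ of non-leaf edges. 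Combining these splittings with the inclusions into the coends defining $BP(I)$ and $BP(J)$ yields a natural map on the $T$-summand; on summands indexed by trees $T$ with $J \notin T$ I declare $\nu_i$ to be the zero map (i.e.\ send everything to the basepoint).

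Next I would verify that the assembled map passes to the coend: that for any inclusion $\iota \colon T \hookrightarrow \tilde T$ in $\mathsf{Tree}_{I \cup_i J}$ the standard coend relation $(\iota_*(r), p) \sim (r, \iota^*(p))$ is respected. The case $J \in T$ is routine since $\iota$ then splits as $T' \hookrightarrow \tilde T'$ and $T'' \hookrightarrow \tilde T''$, and functoriality of $\bar{w}$ and $P$ finishes the job; the case $J \notin \tilde T$ is trivially zero on both sides. The only delicate case is $J \in \tilde T$ but $J \notin T$: here $\iota_*(r) \in \bar{w}(\tilde T)$ has its $r_J$-coordinate equal to $0$, which is precisely the weight on the root of $\tilde T''$ in the splitting $\tilde T = \tilde T' \cup_i \tilde T''$. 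By the root-quotient identification in Definition~\ref{def:w} this forces the $\bar{w}(\tilde T'')$-factor of the image to the basepoint, so $\nu_i(\iota_*(r), p)$ is the basepoint, matching the convention on the $T$-summand.

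The remaining task is to check the cooperad axioms. Naturality in bijections of the indexing finite sets is immediate from the functoriality of grafting. For the first associativity axiom, given $T \in \mathsf{Tree}_{I \cup_i J \cup_j K}$, both iterated decomposition routes produce nonzero output exactly when both $J \cup_j K$ and $K$ are edges of $T$, and in that case $T$ decomposes uniquely as $T' \cup_i T'' \cup_j T'''$; the weight and operation factors agree in either order because the partition $E(T) \isom E(T') \sqcup E(T'') \sqcup E(T''')$ is canonical and the iterated splitting of $P$ is associative. The second associativity, for $i \neq i' \in I$ with inputs $J$ and $J'$, is analogous and uses that the two edge-conditions (existence of $J$ and of $J'$ in $T$) are independent since $J \cap J' = \emptyset$. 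I expect the only real obstacle to lie in the well-definedness step: the precise matching between the coend relation forced by $r_J = 0$ and the ``basepoint on trees without $J$'' convention is the one place a choice is being made, and every subsequent verification piggybacks on it.
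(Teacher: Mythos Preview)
Your proposal is correct and follows the standard approach. The paper itself does not supply a proof of this proposition: it is stated with a citation to the authors' earlier work \cite{salvatore:1999,ching:2005}, where the cooperad structure on $BP$ was first constructed. Your argument is essentially the one found in those references: the decomposition map is defined tree-by-tree via the splitting of $\bar{w}$ and $P$ along a grafting, sent to the basepoint on trees not containing $J$ as an edge, and the well-definedness on the coend hinges exactly on the point you isolate, namely that the root-collapse condition in $\bar{w}(\tilde T'')$ absorbs the $r_J = 0$ coordinate created by $\iota_*$ when $J \in \tilde T \setminus T$. The coassociativity verifications are then routine bookkeeping with the canonical edge partitions, as you describe.
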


\begin{remark}
For an operad $P$ of \emph{unpointed} spaces, we write $BP$ for the bar-cooperad of $P_+$, the operad of pointed spaces obtained by adding a disjoint basepoint to each term of $P$. In particular, this notation applies when $P = D_V$ the restricted little disc operad.
\end{remark}

We now turn to the construction of a map
\[ \alpha: F_V(I)_+ \smsh BD_V(I) \to \bar{S}_V(I). \]
According to Definition~\ref{def:bar1}, such a map $\alpha$ will be determined by a suitable collection of maps
\[ \alpha_T: F_V(I)_+ \smsh \bar{w}(T) \smsh D_V(T)_+ \to \bar{S}_V(I) \]
for each $I$-labelled tree $T$, where
\[ D_V(T) := \prod_{e \in E(T)} D_V(I_e). \]
The operad composition maps for $D_V$ allow us to identify $D_V(T)$ with a closed subspace of $D_V(I)$ which we now describe.

\begin{lemma} \label{lem:DV}
For an $I$-labelled tree $T$, the operad composition maps for $D_V$ determine a closed embedding
\[ D_V(T) = \prod_{e \in E(T)} D_V(I_e) \to D_V(I) \subseteq R_V(I) \]
whose image is the set of points $(x,t) \in R_V(I)$ such that
\begin{itemize}
  \item for edges $e' \subseteq e$ in $T$, $|x_{e'} - x_e| \leq t_e - t_{e'}$;
  \item for edges $e'' \cap e = \varnothing$ in $T$, $|x_{e''} - x_e| \geq t_e + t_{e''}$.
\end{itemize}
\end{lemma}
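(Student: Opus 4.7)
The plan is to exploit that the composition maps of $R_V$ are homeomorphisms (Proposition~\ref{prop:R}). By iterating along the tree $T$, the composition
\[ \prod_{e \in E(T)} R_V(I_e) \xrightarrow{\;\cong\;} R_V(I) \]
is a homeomorphism, and the map in the lemma is simply its restriction to $\prod_e D_V(I_e)$. This restriction is automatically a homeomorphism onto its image, and the image lies in $D_V(I)$ because $D_V \subseteq R_V$ is a suboperad. It remains to identify the image and show it is closed; the closed embedding property then follows from continuity of the inverse (given by iterated cooperad decomposition) together with the fact that the described conditions are all non-strict inequalities.

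To identify the image, I would use Lemma~\ref{lem:btcalc} to relate the components $(x(e),t(e)) \in R_V(I_e)$ of a point in $D_V(T)$ to the weighted barycentres and combined weights of the assembled point $(x,t) \in R_V(I)$. Iterating the decomposition formulas of that lemma from the root down to $e$, one finds that for each immediate child $e'$ of $e$ in $T$,
\[ t(e)_{e'} = t_{e'}/t_e, \qquad x(e)_{e'} = (x_{e'} - x_e)/t_e. \]
The little-disc conditions $|x(e)_{e'}| \leq 1 - t(e)_{e'}$ and $|x(e)_{e'} - x(e)_{e''}| \geq t(e)_{e'} + t(e)_{e''}$ defining $D_V(I_e)$ thus translate directly into $|x_{e'} - x_e| \leq t_e - t_{e'}$ for parent-child pairs, and $|x_{e'} - x_{e''}| \geq t_{e'} + t_{e''}$ for pairs of siblings. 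These are special cases of the two conditions in the lemma.

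To recover the general nested pair $e' \subseteq e$ and general disjoint pair $e'' \cap e = \emptyset$, one chains the sibling/parent-child inequalities along the tree using the triangle inequality: nesting inequalities telescope along the unique chain from $e'$ to $e$, and for a disjoint pair one finds the nearest common ancestor, applies the sibling-disjointness there, and then uses nesting inequalities to descend to $e$ and $e''$. Conversely, given $(x,t) \in R_V(I)$ satisfying the two stated families of inequalities, the iterated cooperad decomposition of Proposition~\ref{prop:R} produces components $(x(e),t(e)) \in R_V(I_e)$, and the displayed formulas show that the immediate-child subset of our inequalities guarantees $(x(e),t(e)) \in D_V(I_e)$; hence $(x,t)$ lies in the image.

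The main bookkeeping obstacle is formalising the claim that $t(e)_{e'} = t_{e'}/t_e$ and $x(e)_{e'} = (x_{e'} - x_e)/t_e$ for immediate children $e'$ of $e$ at arbitrary depth. I would handle this by induction on $|E(T)|$, peeling off one grafting $T = T' \cup_i T''$ at a time and invoking Lemma~\ref{lem:btcalc} with $K = e' \subseteq J$ (when $e$ is the grafting point) or $K$ not meeting $J$ (when $e$ lies above the grafting point). Closedness of the image in $R_V(I)$ is then immediate since all the defining conditions are $\leq$ or $\geq$ inequalities between continuous functions of $(x,t)$.
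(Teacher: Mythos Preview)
Your proposal is correct and follows essentially the same approach as the paper: both argue by induction on the grafting structure of $T$, invoking Lemma~\ref{lem:btcalc} to relate barycentres and combined weights across a single grafting $T' \cup_i T''$, and both handle the ``cross-tree'' nested or disjoint pairs (one edge in $T'$, one in $T''$) by passing through the intermediate edge $J$ and applying the triangle inequality. The only presentational difference is that you first extract the explicit component formulas $t(e)_{e'} = t_{e'}/t_e$ and $x(e)_{e'} = (x_{e'} - x_e)/t_e$ and then read off the inequalities, whereas the paper works directly with the full set of inequalities $D'_V(T)$ and checks they are preserved and reflected under a single grafting step; the underlying computations are identical.
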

\begin{proof}
Let $D'_V(T)$ be the subset of $R_V(T)$ of points $(x,t)$ satisfying the two given conditions. We prove that $D'_V(T) = D_V(T)$ by induction on the number of non-leaf edges in $T$.

When $T$ is a corolla, the first condition (applied when $e$ is the root and $e'$ is the leaf labelled $i$), says that $|x_i| \leq 1 - t_i$, and the second condition (applied to leaves labelled $i$ and $i'$) says that $|x_i - x_{i'}| \geq t_i + t_{i'}$. Thus in this case $D'_V(T) = D_V(I) = D_V(T)$.

It is now sufficient to show that $(x,t) \in D'_V(T \cup_i T')$ if and only if $(x/J,t/J) \in D'_V(T)$ and $(x|J,t|J) \in D'_V(T')$, with $T$ and $T'$ respectively $I$-labelled and $J$-labelled trees, and $i \in I$. For a given edge $e$ in $T$, we denote by $\bar{e}$ the corresponding edge in $T \cup_i T'$. Suppose first that $(x,t) \in D'_V(T \cup_i T')$. If $e' \subseteq e$ in $T$, then by Lemma~\ref{lem:btcalc}:
\[ |(x/J)_{e'} - (x/J)_e| = |x_{\bar{e}'} - x_{\bar{e}}| \leq t_{\bar{e}'} - t_{\bar{e}} = (t/J)_{e'} - (t/J)_e, \]
and if $e'' \cap e = \varnothing$ in $T$, then
\[ |(x/J)_{e''} - (x/J)_e| = |x_{\bar{e}''} - x_{\bar{e}}| \geq t_{\bar{e}''} + t_{\bar{e}} = (t/J)_{e''} + (t/J)_e, \]
so that $(x/J,t/J) \in D'_V(T)$. Similarly, if $e' \subseteq e$ in $T'$, then
\[ |(x|J)_{e'} - (x|J)_e| = t_J^{-1}|x_{e'} - x_e| \leq t_J^{-1}(t_{e'} - t_{e}) = (t|J)_{e'} - (t|J)_e, \]
and if $e'' \cap e = \varnothing$ in $T'$, then
\[ |(x|J)_{e''} - (x|J)_e| = t_J^{-1}|x_{e''} - x_e| \geq t_J^{-1}(t_{e''} + t_{e}) = (t|J)_{e''} + (t|J)_e, \]
so that $(x/J,t/J) \in D'_V(T')$.

Conversely, suppose that $(x/J,t/J) \in D'_V(T)$ and $(x|J,t|J) \in D'_V(T')$, and consider two nested edges in $T \cup_i T'$. If both edges are in $T$, or both in $T'$, then very similar calculations to those above imply the desired inequality. So assume the edges are $e' \subseteq \bar{e}$ for $e \in T$ and $e' \in T'$. Notice that $\bar{\{i\}} = J$, i.e.\ the leaf edge of $T$ corresponds to the root edge of $T'$ inside $T \cup_i T'$. We then have
\[ |x_{\bar{e}} - x_{\bar{\{i\}}}| = |(x/J)_e - (x/J)_i| \leq (t/J)_e - (t/J)_i = t_{\bar{e}} - t_J \]
and
\[ |x_J - x_{e'}| = t_J|(x|J)_J - (x|J)_{e'}| \leq t_J(t|J)_J - t_J(t|J)_{e'} = t_J - t_{e'} \]
and so
\[ |x_{\bar{e}} - x_{e'}| \leq t_{\bar{e}} - t_{e'} \]
as desired. Similarly, suppose that $\bar{e}$ and $e''$ are disjoint edges of $T \cup_i T'$, where $e$ is an edge of $T$ and $e''$ and edge of $T'$. Then $e$ is disjoint from $\{i\}$ in $T$ and so
\[ |x_{\bar{e}} - x_{\bar{\{i\}}}| = |(x/J)_e - (x/J)_i| \geq (t/J)_e + (t/J)_i = t_{\bar{e}} + t_J \]
and $e'$ is contained in $J$, so $|x_J - x_{e'}| \leq t_J - t_{e'}$ as above. Therefore
\[ |x_{\bar{e}} - x_{e'}| \geq |x_{\bar{e}} - x_{\bar{\{i\}}}| - |x_J - x_{e'}| \geq t_{\bar{e}} + t_{e'} \]
as desired. Therefore, $(x,t) \in D'_V(T \cup_i T')$.
\end{proof}

\begin{definition} \label{def:alpha}
Let $T$ be an $I$-labelled tree, and take points $y \in F_V(I)$, $r \in [0,\infty]^{E(T)}$ and $(z,t) \in D_V(T)$. First suppose that $r_e \neq \infty$ for all $e \in E(T)$. We define
\[ \alpha_T(y,r,(z,t)) := (x,t) \in R_V(I) \]
by setting
\[ x_i := z_i - \sum_{i \in e \in E(T)} t_e r_e y(e)_i \]
where we are assuming that the tuple $y(e) \in V^e$ satisfies the weighted barycentre and weighted norm conditions of Remark~\ref{rem:FM}. The sum here is taken over all non-leaf edges $e$ of $T$ that contain $i$. Notice that the pair $(x,t)$ is indeed in $R_V(I)$ as required.

We extend $\alpha_T$ to a function
\begin{equation} \label{eq:alpha} \alpha_T: F_V(I) \times [0,\infty]^{E(T)} \times D_V(T) \to S_V(I) \end{equation}
by setting $\alpha_T(y,r,(z,t)) := \infty$ (i.e.\ the basepoint in $S_V(I)$) if $r_e = \infty$ for some edge $e$.
\end{definition}

\begin{lemma}
The map $\alpha_T$ of (\ref{eq:alpha}) is continuous.
\end{lemma}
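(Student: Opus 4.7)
The plan is to check continuity in two parts: on the open subset $U := F_V(I) \times [0,\infty)^{E(T)} \times D_V(T)$, where $\alpha_T$ takes values in the open dense subset $R_V(I) \subset S_V(I)$, and at points where some coordinate $r_e = \infty$, at which $\alpha_T$ equals the basepoint of $S_V(I)$. On $U$, the only delicate ingredient is continuity of the normalized representative $y(J) \in V^J$ as a function of $(y, t)$. Taking any local continuous lift of the class $y(J) \in V^J/(V \times (0,\infty))$, one subtracts its $t$-weighted barycentre and divides by its $t$-weighted norm; the weighted norm is strictly positive thanks to the non-constancy clause in Definition~\ref{def:FM}, so the normalization is continuous in $(y, t)$. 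Continuity of $\alpha_T$ on $U$ then follows directly from the polynomial nature of the defining formula.

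For continuity at a limit point $(y, r, (z, t))$ with $r_{e_\ast} = \infty$ for some $e_\ast$, it suffices to show that along any approaching sequence the fibrewise magnitude $\max_{i,j \in I} |x_i^{(n)} - x_j^{(n)}|$ tends to $\infty$, since $t^{(n)}$ remains in a compact subset of $\Delta(I)$ and only the fibre can escape to infinity in $S_V(I)$. Pick $e_0 \in E(T)$ minimal under inclusion such that $r_{e_0}^{(n)} \to \infty$; by minimality, $r_e^{(n)}$ is bounded for every $e \in E(T)$ strictly contained in $e_0$. Since $y(e_0)$ is non-constant, choose $i, j \in e_0$ with $y(e_0)_i \neq y(e_0)_j$, and let $e_{ij} \subseteq e_0$ denote the smallest edge of $T$ containing both.

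Expanding $x_i^{(n)} - x_j^{(n)}$ via the defining formula, the contributions from $(z_i^{(n)} - z_j^{(n)})$ and from those edges of $E(T)$ strictly contained in $e_{ij}$ (which contain exactly one of $i, j$) stay bounded, while the remaining contribution is
\[ -\sum_{e_{ij} \subseteq e \in E(T)} t_e^{(n)} r_e^{(n)} \bigl(y^{(n)}(e)_i - y^{(n)}(e)_j\bigr). \]
The main obstacle I anticipate is the possibility of cancellation among the dominant edges here, and it is resolved by the \emph{positive}-scaling part of the nested condition of Definition~\ref{def:FM}: for every $e \supseteq e_{ij}$, that condition forces $y^{(n)}(e)_i - y^{(n)}(e)_j = \mu_e^{(n)}\bigl(y^{(n)}(e_{ij})_i - y^{(n)}(e_{ij})_j\bigr)$ with $\mu_e^{(n)} \geq 0$, and $\mu_{e_0}^{(n)} \to \mu_{e_0} > 0$ precisely because $y(e_0)_i \neq y(e_0)_j$. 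Factoring out the common vector, which stays bounded away from $0$ by continuity, leaves a non-negative scalar coefficient bounded below by $t_{e_0}^{(n)} r_{e_0}^{(n)} \mu_{e_0}^{(n)} \to \infty$. Hence $|x_i^{(n)} - x_j^{(n)}| \to \infty$, as required.
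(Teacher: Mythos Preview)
Your proof is correct and follows essentially the same strategy as the paper's: pick a minimal edge $e_0$ with $r_{e_0}\to\infty$, choose $i,j\in e_0$ with $y(e_0)_i\neq y(e_0)_j$, and use the positive-scaling clause of Definition~\ref{def:FM} to see that the contributions from edges $e\supseteq e_{ij}$ are all non-negative multiples of a single nonzero vector, so no cancellation can occur. The only cosmetic difference is that the paper arranges $e_{ij}=e_0$ by choosing $i,j$ in distinct incoming edges of $e_0$, whereas you allow $e_{ij}\subsetneq e_0$ and instead observe that $\mu_{e_0}^{(n)}\to\mu_{e_0}>0$; both variants work, and your explicit discussion of continuity of the normalized representative $y(J)$ on the open locus is a welcome addition that the paper leaves implicit.
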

\begin{proof}
Choose some point $(y,r,(z,t))$ where $r_e = \infty$ for some $e \in E(T)$, and consider a sequence of points
\[ (y^{(n)},r^{(n)},(z^{(n)},t^{(n)})) \in F_V(I) \times [0,\infty]^{E(T)} \times D_V(T) \]
that converges to $(y,r,(z,t))$. Without loss of generality we may assume that $r^{(n)}_e \neq \infty$ for all $n \in \mathbb{N}$ and $e \in E(T)$.
%changed
We then claim that
\[ (x^{(n)},t^{(n)}) := \alpha_T(y^{(n)},r^{(n)},(z^{(n)},t^{(n)})) \to \infty \in S_V(I). \]
To see this choose an edge $e \in E(T)$ minimal (i.e.\ furthest from the root) such that $r_e = \infty$. The vectors in the $e$-tuple $y(e)$ are not all equal, so we can choose $i,j \in e$ such that
\[ y(e)_i - y(e)_j \neq 0. \]
and $e$ is the smallest
%changed minimal-> smallest 
edge containing both $i$ and $j$. Now consider the sequence of points in $R_V(I)$ given by
\[ x^{(n)}_i - x^{(n)}_j = z^{(n)}_i - z^{(n)}_j - \sum_{i \in e'} t^{(n)}_{e'} r^{(n)}_{e'} y^{(n)}(e')_i + \sum_{j \in e'} t^{(n)}_{e'} r^{(n)}_{e'} y^{(n)}(e')_j. \]
Since $i,j \in e$, we can write this expression as
\[ z^{(n)}_i - z^{(n)}_j - \sum_{i \in e' \subsetneq e} t^{(n)}_{e'} r^{(n)}_{e'} y^{(n)}(e')_i + \sum_{j \in e' \subsetneq e}
%changed
t^{(n)}_{e'} r^{(n)}_{e'} y^{(n)}(e')_j - \sum_{e \subseteq e'} t^{(n)}_{e'} r^{(n)}_{e'} (y^{(n)}(e')_i - y^{(n)}(e')_j) \]
where each sum is over all non-leaf edges $e' \in E(T)$ satisfying the given condition.

If we write $v^{(n)}$ for the sum of the first four terms in the above sum, then by the minimality of $e$, the sequence $(v^{(n)})$ converges to the finite vector
\[ v := z_i - z_j - \sum_{i \in e' \subsetneq e} t_{e'}r_{e'}y(e')_i + \sum_{j \in e' \subsetneq e} t_{e'}r_{e'}y(e')_j. \]
The key observation then is that each of the vectors
\[ y^{(n)}(e')_i - y^{(n)}(e')_j \]
for $e \subseteq e'$, is a non-negative scalar multiple $\lambda^{(n)}_{e'}$ of $y^{(n)}(e)_i - y^{(n)}(e)_j$. This comes from the restriction condition $y^{(n)}(e')|_e \equiv y^{(n)}(e)$, modulo translation and scaling, on the tuples of vectors making up the point $y^{(n)} \in F_V(I)$, as in Definition~\ref{def:FM}.

We can therefore write
\[ x^{(n)}_i - x^{(n)}_j = v^{(n)} + \left( \sum_{e \subseteq e'} t^{(n)}_{e'} r^{(n)}_{e'} \lambda^{(n)}_{e'} \right) (y^{(n)}(e)_i - y^{(n)}(e)_j). \]
In this sum, we have $(r^{(n)}_e) \to r_e = \infty$, $(\lambda^{(n)}_e) \to 1$ and $(t^{(n)}_e) \to t_e > 0$. It follows that the sum converges to $\infty$. Since $y^{(n)}(e)_i - y^{(n)}(e)_j \to y(e)_i - y(e)_j \neq 0$, it follows that
\[ (x^{(n)}_i - x^{(n)}_j) \to \infty \]
in the one-point compactification of $V$. This observation implies that $(x^{(n)},t^{(n)}) \to \infty$ in $S_V(I)$ as required, thus establishing the continuity of the map $\alpha_T$ of (\ref{eq:alpha}).
\end{proof}

We next note the following easy consequence of the definition of $\alpha_T$ in \ref{def:alpha}.

\begin{lemma} \label{lem:be}
Suppose $r_e \neq \infty$ for all $e \in E(T)$. Let $e$ be an edge of $T$. Then we have
\[ \alpha_T(y,r,(z,t))_e = z_e - \sum_{e \subsetneq e'} t_{e'} r_{e'} y(e')_e. \]
\end{lemma}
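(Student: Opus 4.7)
The proof will be a direct computation that unpacks the weighted-barycentre notation and exploits the fact that, in a tree, any two edges are either nested or disjoint. By the Notation conventions, $\alpha_T(y,r,(z,t))_e$ means the weighted barycentre
\[ x_e \; = \; t_e^{-1}\sum_{i \in e} t_i\, x_i, \]
where $x_i$ is given by the formula of Definition \ref{def:alpha}. Substituting that formula and splitting into two sums, the first piece is $t_e^{-1}\sum_{i\in e} t_i z_i = z_e$, which already matches the first term of the claim, and the second piece is $-t_e^{-1}\sum_{i\in e} t_i \sum_{i \in e' \in E(T)} t_{e'} r_{e'} y(e')_i$.

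The plan is then to interchange the order of summation in this double sum, grouping over edges $e' \in E(T)$ with $e \cap e' \neq \emptyset$, and to use the tree axiom in Definition \ref{def:tree} to split the resulting sum into the two mutually exclusive cases $e' \subseteq e$ and $e \subsetneq e'$. (The third possibility $e \cap e' = \emptyset$ contributes nothing, since then no $i \in e$ belongs to $e'$.)

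For $e' \subseteq e$, including the case $e' = e$, the inner sum becomes $\sum_{i \in e'} t_i y(e')_i$, which vanishes by the weighted barycentre condition on the Fulton-MacPherson representative $y(e')$ fixed in Remark \ref{rem:FM}; thus this case contributes zero. For $e \subsetneq e'$, every $i \in e$ lies in $e'$, so the inner sum equals $\sum_{i \in e} t_i y(e')_i = t_e\, y(e')_e$ by the very definition of the weighted barycentre of the restriction $(y(e')_i)_{i \in e}$. Substituting this back and cancelling the prefactor $t_e^{-1}$ produces exactly $-\sum_{e \subsetneq e'} t_{e'} r_{e'} y(e')_e$.

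There is no real obstacle here: the argument is purely formal once one recognises that the tree structure reduces the double sum to just two cases, and that the weighted barycentre normalisation of $y(e')$ kills precisely the terms with $e' \subseteq e$. The one place to be careful is notational, namely keeping straight that $y(e')_e$ denotes the weighted barycentre (with weights $t_i$, $i \in e$) of the subtuple indexed by $e$ of the $e'$-tuple $y(e')$, rather than a new component of some vector.
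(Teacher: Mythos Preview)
Your proposal is correct and follows essentially the same approach as the paper's proof: expand the barycentre $x_e = t_e^{-1}\sum_{i\in e} t_i x_i$, substitute the defining formula for $x_i$, interchange the order of summation, and use the tree axiom to reduce to the two cases $e'\subseteq e$ (which vanishes by the weighted barycentre normalisation of $y(e')$) and $e\subsetneq e'$ (which gives $t_e\,y(e')_e$).
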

\begin{proof}
By definition, the left-hand side is equal to
\[ \frac{1}{t_e}\sum_{i \in e} t_i (z_i - \sum_{i \in e'} t_{e'}r_{e'}y(e')_i) = z_e - \frac{1}{t_e}\sum_{e' \in T} t_{e'}r_{e'}\sum_{i \in e \cap e'}t_i y(e')_i = z_e - \frac{1}{t_e}\sum_{e' \in T} t_{e'}r_{e'} t_{e \cap e'}y(e')_{e \cap e'}. \]
If $e'$ and $e$ are edges in the same tree with nonempty intersection, we have either $e' \subseteq e$ (in which case $y(e')_{e \cap e'} = y(e')_{e'} = 0$ so these terms vanish) or $e \subsetneq e'$. Thus we obtain the desired formula.
\end{proof}

\begin{lemma}
The map $\alpha_T$ of (\ref{eq:alpha}) induces a well-defined map of pointed spaces
\[ \alpha_T: F_V(I)_+ \smsh \bar{w}(T) \smsh D_V(T)_+ \to \bar{S}_V(I). \]
\end{lemma}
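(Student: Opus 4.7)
The plan is to verify that the continuous map
\[ \alpha_T: F_V(I) \times [0,\infty]^{E(T)} \times D_V(T) \to S_V(I) \]
from the preceding lemma factors through the appropriate basepoint identifications, and that its composition with the quotient $S_V(I) \to \bar{S}_V(I)$ sends those identifications to the basepoint. Since $F_V(I)_+$ and $D_V(T)_+$ merely have disjoint basepoints added, only two conditions need checking in order to pass to the smash product $F_V(I)_+ \smsh \bar{w}(T) \smsh D_V(T)_+$: (i) if $r_e = \infty$ for some $e \in E(T)$, then the image should be the basepoint of $\bar{S}_V(I)$; (ii) if $r_I = 0$, the image should land in $S_V(I) \setminus \mathring{S}_V(I)$ so that it is collapsed in $\bar{S}_V(I)$.

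Condition (i) is immediate from Definition~\ref{def:alpha}: by construction $\alpha_T(y,r,(z,t)) = \infty$ in $S_V(I)$, and $\infty$ is also the basepoint of $\bar{S}_V(I)$. For (ii), I would use Lemma~\ref{lem:be} to compute, for any edge $e \in E(T)$ which is a direct child of the root $I$ (so the only edge $e'$ with $e \subsetneq e'$ is $I$ itself):
\[ \alpha_T(y,r,(z,t))_e = z_e - t_I r_I\, y(I)_e = z_e, \]
the last equality because $r_I = 0$. Since $|I| \geq 2$, the root has at least two direct children $e, e' \in T$, and the corresponding subsets of $I$ are disjoint. By Lemma~\ref{lem:DV}, $(z,t) \in D_V(T)$ implies $|z_e - z_{e'}| \geq t_e + t_{e'}$. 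Hence, writing $(x,t) = \alpha_T(y,r,(z,t))$, we have $|x_e - x_{e'}| = |z_e - z_{e'}| \geq t_e + t_{e'}$.

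On the other hand, Lemma~\ref{lem:barS} shows that if $(x,t)$ were in $\mathring{S}_V(I)$ then $|x_e - x_{e'}| < \min\{t_e, t_{e'}\} \leq t_e + t_{e'}$, a contradiction (since $t_e, t_{e'} > 0$). Therefore $(x,t) \notin \mathring{S}_V(I)$, which is precisely the subspace collapsed in forming $\bar{S}_V(I)$, so the image is the basepoint as required. The only mildly delicate point is the $r_I = 0$ case; the rest is bookkeeping about quotient topologies, which follows since $\bar{w}(T)$ and $\bar{S}_V(I)$ carry the quotient topology from $[0,\infty]^{E(T)}$ and $S_V(I)$ respectively, and $\alpha_T$ is already continuous on the unquotiented product.
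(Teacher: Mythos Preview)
Your proof is correct and follows essentially the same approach as the paper: both handle the $r_e = \infty$ case by appeal to the definition, and both treat the $r_I = 0$ case by using Lemma~\ref{lem:be} (or the equivalent direct computation) to see that $x_e = z_e$ for the incoming edges to the root, then invoking Lemma~\ref{lem:DV} for the bound $|z_e - z_{e'}| \geq t_e + t_{e'}$ and Lemma~\ref{lem:barS} to conclude $(x,t) \notin \mathring{S}_V(I)$.
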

\begin{proof}
From the definition, we already know that if $r_e = \infty$ for some $e \in E(T)$, then $\alpha_T(y,r,(z,t)) = \infty$ in $S_V(I)$. It remains to consider the case where $r_I = 0$. Let $e,e''$ be two incoming edges to the root in $T$. Then, if $(x,t) = \alpha_T(y,r,(z,t))$, we have
\[ |x_e - x_{e''}| = |(z_e - z_{e''}) - (t_I r_I y(I)_e - t_I r_I y(I)_{e''})| = |z_e - z_{e''}| \geq t_e + t_{e''} \]
by Lemma~\ref{lem:DV} and 
%changed
Lemma~\ref{lem:be}. But then it is not the case that $|x_e - x_{e''}| < \min\{t_e,t_{e''}\}$ and so $(x,t) \notin \mathring{S}_V(I)$ by Lemma~\ref{lem:barS}. Therefore $(x,t)$ is the basepoint in $\bar{S}_V(I)$.
\end{proof}

\begin{lemma} \label{lem:alpha-map}
For each finite set $I$, the maps $\alpha_T$ for all $I$-labelled trees $T$, induce a map
\[ \alpha_I: F_V(I)_+ \smsh BD_V(I) = F_V(I)_+ \smsh \bar{w}(T) \smsh_{T \in \mathsf{Tree}_I} D_V(T)_+ \to \bar{S}_V(I). \]
\end{lemma}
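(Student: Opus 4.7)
The plan is to verify that the collection $\{\alpha_T\}_{T \in \mathsf{Tree}_I}$ is natural with respect to tree inclusions, so that it factors through the coend defining $BD_V(I)$. Concretely, for each inclusion $\iota : T \hookrightarrow T'$ in $\mathsf{Tree}_I$, and every $(y, s, (z,t)) \in F_V(I) \times \bar{w}(T) \times D_V(T')$, I want to establish the equality
\[ \alpha_{T'}(y, \iota_* s, (z,t)) = \alpha_T(y, s, \iota^*(z,t)) \]
in $\bar{S}_V(I)$. Together with the $F_V(I)_+$-factor, which is smashed along trivially, this is precisely what is needed for the $\alpha_T$ to descend to a map out of the coend.

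First, I identify the two functorial maps explicitly. By Lemma~\ref{lem:DV} and the associativity of the operad structure on $D_V$, the contravariant map $\iota^* : D_V(T') \to D_V(T)$ induced by the inclusion $T \subseteq T'$ is literally the inclusion of closed subspaces inside $D_V(I) \subseteq R_V(I)$: the extra edges of $T'$ encode additional compatible constraints, so $D_V(T') \subseteq D_V(T)$ and $\iota^*$ is the identity on coordinates. From Definition~\ref{def:w}, the covariant map $\iota_* : \bar{w}(T) \to \bar{w}(T')$ sends $s = (s_e)_{e \in E(T)}$ to the tuple with $(\iota_* s)_e = s_e$ for $e \in E(T)$ and $(\iota_* s)_e = 0$ for $e \in E(T') \setminus E(T)$.

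Second, I verify the equality by direct computation on coordinates, assuming first that $s$ is not a basepoint of $\bar{w}(T)$. The choice of representative $y(e) \in V^e$ specified in Remark~\ref{rem:FM} depends only on the set $e$ and the weights $(t_i)_{i \in e}$, not on the ambient tree, so $y(e)$ is the same whether $e$ is regarded as an edge of $T$ or of $T'$. By Definition~\ref{def:alpha}, the $i$-th component of $\alpha_{T'}(y,\iota_* s, (z,t))$ is
\[ z_i - \sum_{i \in e \in E(T')} t_e (\iota_* s)_e\, y(e)_i. \]
Splitting the sum according to $E(T') = E(T) \sqcup (E(T')\setminus E(T))$, the second piece vanishes because $(\iota_* s)_e = 0$ on the new edges, while the first piece is exactly the formula for $\alpha_T(y, s, \iota^*(z,t))_i$. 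Hence the two sides agree on the $R_V(I)$-coordinates.

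Finally, I handle the basepoint cases. If $s$ is the basepoint of $\bar{w}(T)$, meaning either $s_e = \infty$ for some $e \in E(T)$ or $s_I = 0$, then the same condition holds for $\iota_* s$ (since $I$ and all edges of $T$ are also edges of $T'$), so both sides land at the basepoint of $\bar{S}_V(I)$. Likewise, if $(z,t)$ is the disjoint basepoint of $D_V(T')_+$, then $\iota^*(z,t)$ is the disjoint basepoint of $D_V(T)_+$, so both sides are the basepoint by the conventions of smash products. There is no real obstacle here; the content of the lemma is essentially the observation that the explicit sum in Definition~\ref{def:alpha} is insensitive to padding the tree with additional edges carrying zero weights, together with the identification of $\iota^*$ on $D_V(T')$ as an honest subspace inclusion.
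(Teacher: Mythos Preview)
Your proof is correct and follows essentially the same approach as the paper: both identify $\iota^*: D_V(T') \to D_V(T)$ as the subspace inclusion via Lemma~\ref{lem:DV}, and both observe that the difference between the two composites consists of the terms $t_{e'}(\iota_*s)_{e'}y(e')_i$ for $e' \in E(T') \setminus E(T)$, which vanish since $(\iota_*s)_{e'} = 0$. Your treatment is slightly more explicit about the basepoint cases and the tree-independence of the representatives $y(e)$, but the core argument is identical.
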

\begin{proof}
We have to show that the maps $\alpha_T$ are compatible with the structure maps for the functors $\bar{w}: \mathsf{Tree}_I \to \based$ and $D_V(-)_+: \mathsf{Tree}_I^{op} \to \based$, i.e.\ that given $T \subseteq T'$ in $\mathsf{Tree}_I$, the following diagram commutes:
\[ \begin{diagram}
  \node{F_V(I)_+ \smsh \bar{w}(T) \smsh D_V(T')_+} \arrow{s} \arrow{e} \node{F_V(I)_+ \smsh \bar{w}(T) \smsh D_V(T)_+} \arrow{s,r}{\alpha_T} \\
  \node{F_V(I)_+ \smsh \bar{w}(T') \smsh D_V(T')_+} \arrow{e,t}{\alpha_{T'}} \node{\bar{S}_V(I)}
\end{diagram} \]
Note that since we have identified $D_V(T)$ and $D_V(T')$ with subspaces of $D_V(I)$, as in Lemma~\ref{lem:DV}, the operad composition map $D_V(T') \to D_V(T)$ is simply the inclusion between these two subspaces.

So take $(y,r,(z,t)) \in F_V(I) \times [0,\infty]^{E(T)} \times D_V(T')$. The difference between the two composites in the above diagram consists of terms of the form
\[ t_{e'}(\iota_*r)_{e'}y(e')_i \]
where $e' \in E(T') - E(T)$, and $\iota_*$ is as in Definition~\ref{def:w}. But $(\iota_*r)_{e'} = 0$ in each such case, so the diagram commutes.
\end{proof}

We have now constructed the desired map (of symmetric sequences) $F_V \smsh BD_V \to \bar{S}_V$. The next lemma shows that this map respects the operad and cooperad structures on $F_V$, $\bar{S}_V$ and $BD_V$. Its proof is the most challenging and technical part of the paper.

\begin{lemma} \label{lem:alpha-operad}
For finite sets $I,J$ and $i \in I$, there is a commutative diagram
\[ \begin{diagram}
  \node{F_V(I)_+ \smsh F_V(J)_+ \smsh BD_V(I \cup_i J)} \arrow{e,r}{\circ_i(F_V)} \arrow{s,l}{\circ^i(BD_V)} \node{F_V(I \cup_i J)_+ \smsh BD_V(I \cup_i J)} \arrow[2]{s,r}{\alpha_{I \cup_i J}} \\
  \node{F_V(I)_+ \smsh F_V(J)_+ \smsh BD_V(I) \smsh BD_V(J)} \arrow{s,l}{\alpha_I \smsh \alpha_J} \\
  \node{\bar{S}_V(I) \smsh \bar{S}_V(J)} \arrow{e,t}{\circ_i(\bar{S}_V)} \node{\bar{S}_V(I \cup_i J)}
\end{diagram} \]
\end{lemma}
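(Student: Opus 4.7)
The bar cooperad $BD_V(I \cup_i J)$ is the coend $\bar{w}(T) \smsh_{T \in \mathsf{Tree}_{I \cup_i J}} D_V(T)_+$, so by the universal property of the coend it suffices to verify the commutativity after precomposing with the canonical map $\bar{w}(T) \smsh D_V(T)_+ \to BD_V(I \cup_i J)$ for each $I \cup_i J$-labelled tree $T$. I would split the verification according to whether $T$ contains $J$ as an edge.

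For a ``good'' tree $T = T_1 \cup_i T_2$ with $T_1 \in \mathsf{Tree}_I$ and $T_2 \in \mathsf{Tree}_J$, the cooperad decomposition $\circ^i(BD_V)$ sends a representative $(r,(v,s))$ to $(r^{(1)}, (v/J, s/J)) \smsh (r^{(2)}, (v|J, s|J))$, where the $r$-data splits under the edge partition $E(T) = E(T_1) \sqcup E(T_2)$ from Definition~\ref{def:grafting} and the $D_V$-data splits using the cooperad inverse formulas of Proposition~\ref{prop:R}. Unfolding $\alpha_T$ via Definition~\ref{def:alpha} and the $\bar{S}_V$-composition induced from $R_V$ (Proposition~\ref{prop:S}), the goal reduces to showing, for each $k \in I \cup_i J$, the identity
\[ v_k - \sum_{k \in e \in E(T)} s_e r_e (y \circ_i z)(e)_k = \begin{cases} \xi_k & \text{if } k \in I \setminus \{i\}, \\ \xi_i + (s/J)_i \eta_k & \text{if } k \in J, \end{cases} \]
where $\xi$ and $\eta$ are the outputs of $\alpha_{T_1}$ and $\alpha_{T_2}$ respectively. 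This is verified case-by-case by partitioning the edge sum according to whether $e$ lies in $E(T_2)$, is an ancestor of $J$ of the form $e' \cup_i J$ with $e' \in E(T_1)$, $i \in e'$, or lies in $E(T_1)$ with $i \notin e$. The three cases of Lemma~\ref{lem:FM-norm} identify $(y \circ_i z)(e)$ with $z(e)$, $\pi^* y(\pi(e))$ or $y(e)$ in the correct weighted-barycentre and weighted-norm normalization, while Lemma~\ref{lem:btcalc} reconciles the weights $s_e$ and barycentres $v_e$ with the corresponding quantities for the split data $(s/J, v/J)$ and $(s|J, v|J)$.

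For any other tree $T$, i.e.\ when $T$ does not contain $J$ as an edge, the cooperad decomposition sends the $T$-piece to the basepoint, so the task is to prove that the top composite also lands in the basepoint of $\bar{S}_V(I \cup_i J)$. If $T$ admits a refinement $T^+ \supseteq T$ containing $J$ as an edge, the coend relation identifies the $T$-piece with the $T^+$-piece having $r_J = 0$; applying Lemma~\ref{lem:be} to two distinct children $K_1, K_2$ of $J$ inside $T_2 \subseteq T^+$, combined with the vanishing of the $r_J$-term and the constraint $|v_{K_1} - v_{K_2}| \geq s_{K_1} + s_{K_2}$ from Lemma~\ref{lem:DV}, yields $|x_{K_1} - x_{K_2}| \geq s_{K_1} + s_{K_2}$, which ejects $(x,s)$ from $\mathring{S}_V(I \cup_i J)$ by (the contrapositive of) Lemma~\ref{lem:barS}. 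If instead $T$ carries a ``crossing'' edge $e^*$ which neither contains nor is contained in $J$, no such refinement exists, and one must argue directly: the weighted-norm representative of $(y \circ_i z)(e^*)$ with respect to $s|_{e^*}$ is a different scalar multiple of the direction $y(\pi(e^*))_i - y(\pi(e^*))_k$ (for $k \in e^* \setminus J$) than the representatives at ancestor edges normalized with respect to their own weight restrictions. Choosing three leaves, one in $e^* \cap J$, one in $e^* \setminus J$, and a third outside $e^*$, produces three pairwise-difference equations which cannot all be simultaneously satisfied under the $\mathring{S}_V$-bounds, since the crossing edge's contribution appears with an incompatible proportionality constant. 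Basepoint cases ($r_e = \infty$ or $r_{\mathrm{root}} = 0$) follow by direct inspection using the definitions.

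The main obstacle is this last step. The good-tree computation reduces to a careful but mechanical bookkeeping exercise, but the crossing-edge case is delicate: it requires unpacking the way the weighted-norm conventions at different levels of nesting produce inconsistent rescalings along the direction of a non-trivial Fulton--MacPherson tuple, and then translating that inconsistency into a geometric obstruction to membership in $\mathring{S}_V$. This is exactly the place where the elaborate construction of $\alpha_T$ via weighted barycentres on the barycentre operad $R_V$ is doing the work, and it is this verification which makes the present lemma the technical heart of the paper.
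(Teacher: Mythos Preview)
Your treatment of the ``good'' tree case $T = T_1 \cup_i T_2$ is correct and matches the paper's computation. The dichotomy you set up for bad trees (those not containing $J$ as an edge) into (a) trees with no crossing edge and (b) trees with a crossing edge $e^*$ is also sound, and your argument in case (a) is essentially right: the intermediate edges $K_1, K_2 \subseteq J$ have a common parent $e_0 \supsetneq J$ in $T$, and for every ancestor $d \supseteq e_0$ the normalized tuple $(y' \circ_i y'')(d)$ is constant on $J$ (since $d \supseteq J$, Lemma~\ref{lem:FM-norm} applies and the pullback formula gives $y(d)_k = y'(\pi(d))_i$ for all $k \in J$), so $x_{K_1} - x_{K_2} = z_{K_1} - z_{K_2}$ and the $D_V$-inequality forces $(x,s) \notin \mathring{S}_V$. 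One correction: your invocation of ``the coend relation identifies the $T$-piece with the $T^+$-piece'' is not the right justification, since the $D_V$-data need not lift to $D_V(T^+)$; the argument works because the formula for $\alpha_T$ is literally unchanged whether or not one inserts a phantom edge $J$ with $r_J = 0$.

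The genuine gap is case (b). Your sketch (``choose three leaves \ldots\ incompatible proportionality constant'') is not a proof and does not obviously lead to one. The difficulty is that when there are crossing edges strictly between $e'$ (a maximal edge $\subseteq J$) and the minimal common ancestor $e$ of $e', e''$, the expression $x_{e'} - x_{e''}$ from Lemma~\ref{lem:be} picks up extra terms $s_d r_d y(d)_{e'}$ for those intermediate $d$, and these have no counterpart on the $e''$-side. The paper handles this not by a three-leaf comparison but by two ingredients you are missing: first, a reduction to the case where $J$ is \emph{maximal} among subsets over which $y$ decomposes, which guarantees that $y(I \cup_i J)|_{e_{m-1}}$ is nonconstant and hence that certain scaling constants $\gamma_p$ are strictly positive; second, an induction along the chain $e' = e'_m \subsetneq \cdots \subsetneq e'_0 = e$ showing that each quantity $v_q := \sum_{e \subseteq d} s_d r_d y(d)_{e_q}$ is a \emph{convex combination} of the differences $b_{e_1}, \ldots, b_{e_q}$ (where $b = z - x$), and therefore inherits the bound $|v_q - z_{e_1}| < s_{e_1}$ from the $\mathring{S}_V$-hypothesis via Lemma~\ref{lem:barS}. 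Only after this convexity argument can one compare $v'$ and $v''$ at the top and reach the same contradiction as in your case (a). Your ``incompatible proportionality constant'' intuition does not substitute for this; the rescalings at crossing edges are perfectly compatible with membership in $\mathring{S}_V$ for some configurations, and it is the interplay between the $D_V$-constraint on $z$ and the convexity structure that produces the obstruction.
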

\begin{proof}
Choose points $y' \in F_V(I)$, $y'' \in F_V(J)$, an $(I \cup_i J)$-labelled tree $T$, $r \in [0,\infty]^{E(T)}$ and $(z,t) \in D_V(T)$.

Suppose first that $T = T' \cup_i T''$ for an $I$-labelled trees $T'$ and a $J$-labelled tree $T''$. Then we can write $r = (r',r'')$ for $r' \in [0,\infty]^{E(T')}, r'' \in [0,\infty]^{E(T'')}$, and we have $(z,t) = (z',t') \circ_i (z'',t'')$ for $(z',t') \in D_V(T'), (z'',t'') \in D_V(T'')$, that is: $z = z' +_i t'z'', t = t' \cdot_i t''$.

Going clockwise around the diagram in question, we obtain the point
\[ (x,t) = \alpha_{T' \cup_i T''}(y' \circ_i y'', (r',r''), (z'+_it'z'',t' \cdot_i t'')) \]
which has the following components: for $i' \in I - \{i\}$:
\[ x_{i'} = z'_{i'} - \sum_{i' \in e \in T'} (t' \cdot_i t'')_{\bar{e}} r'_e (y' \circ_i y'')(\bar{e})_{i'} \]
where $\bar{e} = e$ if $i \notin e$ and $\bar{e} = e \cup_i J$ if $i \in e$. Either way, we have
\[ (t' \cdot_i t'')_{\bar{e}} = t'_e \]
and, according to Definition~\ref{def:FM-operad}, we have
\[ (y' \circ_i y'')(\bar{e})_{i'} = y'(e)_{i'}. \]
We therefore conclude
\[ x_{i'} = \alpha_{T'}(y',r',(z',t'))_{i'}. \]
%changed
For $j \in J$:
\[ x_j = (z'_i + t'_iz''_j) - \sum_{j \in e \in E(T'')} (t' \cdot_i t'')_e r''_e y''(e)_j - \sum_{i \in e \in E(T')} (t' \cdot_i t'')_{e \cup_i J} r'_e (y' \circ_i y'')(e \cup_i J)_j. \]
In the first sum, we have
\[ (t' \cdot_i t'')_e = t'_it''_e, \]
and in the second we still have $(t' \cdot_i t'')_{e \cup_i J} = t'_e$ and
\[ (y' \circ_i y'')(e \cup_i J)_j = y'(e)_i. \]
Altogether this gives us
\[ x_j = \alpha_{T'}(y',r',(z',t'))_i + t'_i \alpha_{T''}(y'',r'',(z'',t''))_j. \]
In other words, we have
\begin{equation} \label{eq:alphaTT} \alpha_{T' \cup_i T''}(y' \circ_i y'', (r',r''), (z'+_it'z'',t' \cdot_i t'')) = \alpha_{T'}(y',r',(z',t')) +_i t'\alpha_{T''}(y'',r'',(z'',t'')) \end{equation}
It is easy to check that going anticlockwise around the diagram in question yields the right-hand side of this equation.

Now suppose that $T$ is not of the form $T' \cup_i T''$, i.e.\ $J$ is not an edge in the tree $T$. Applying the $BD_V$ cooperad decomposition map $\circ^i$ to such a point in $BD_V(I \cup_i J)$ yields the basepoint in $BD_V(I) \smsh BD_V(J)$. Thus going anticlockwise around the diagram we obtain the basepoint in $\bar{S}_V(I \cup_i J)$. We must therefore show that
\[ (x,t) = \alpha_T(y,r,(z,t)) \]
is also the basepoint, whenever $y$ is in the image of the operad composition map
\[ \circ_i: F_V(I) \times F_V(J) \to F_V(I \cup_i J) \]
and $J$ is not an edge in the tree $T$.

%new
Suppose first that $y$ is also in the image of the operad composition map for $F_V$ associated to some subset of $I \cup_i J$ that \emph{is} an edge in $T$. Then the argument of the first part of this proof, specifically equation (\ref{eq:alphaTT}), allows us to reduce to the case that $y$ is \emph{not} in the image of the operad composition map associated to any edge of $T$. We can then also assume without loss of generality that $J$ is a maximal subset of $I \cup_i J$ with the property that $y$ is in that image. This assumption means in particular that the tuple $y(I \cup_i J)|_J$ of vectors in $V$ is constant, but that $y(I \cup_i J)|_K$ is not constant for any $K \subseteq I \cup_i J$ that properly contains $J$.

%original
% We can assume, without loss of generality, that $J$ is a maximal subset of $I \cup_i J$ for which the point $y$ can be decomposed in this way.

Now suppose that $(x,t) \in \mathring{S}_V(I \cup_i J)$, i.e.\ is not the basepoint in $\bar{S}_V(I \cup_i J)$. We will develop some consequences of this assumption for the barycentres of the point $x$ with respect to a sequence of consecutive edges
\[ e_m \subsetneq e_{m-1} \subsetneq \dots \subsetneq e_1 \subsetneq e_0 \]
of the $I \cup_i J$-labelled tree $T$, where $e_m \cap J \neq \varnothing$ and $e_{m-1} \nsubseteq J$.

Let us write $b = z - x$. Then, for $q = 1,\dots,m$:
\[ |b_{e_q} - z_{e_1}| \leq |x_{e_q}| + |z_{e_q} - z_{e_1}| < t_{e_q} + (t_{e_1}-t_{e_q}) = t_{e_1}. \]
by definition of $\mathring{S}_V$ and by Lemma \ref{lem:DV}.
It follows that any convex combination
\[ v = \alpha_1 b_{e_1} + \dots + \alpha_m b_{e_m} \]
also satisfies
\begin{equation} \label{eq:vbz} |v - z_{e_1}| < t_{e_1}. \end{equation}
Recall from Lemma~\ref{lem:be} that we have
\begin{equation} \label{eq:be} b_{e_q} = t_{e_{q-1}}r_{e_{q-1}}y(e_{q-1})_{e_q} + \dots + t_{e_1}r_{e_1}y(e_1)_{e_q} + v_q \end{equation}
where
\[ v_q := \sum_{e_0 \subseteq d} t_d r_d y(d)_{e_q}. \]
We claim that, $v_m$ is a convex combination of $b_{e_1},\dots,b_{e_m}$ and hence is subject to the condition (\ref{eq:vbz}). Notice that $v_1 = b_{e_1}$ by definition, so let us proceed by induction. Suppose that $v_q$ is a convex combination of $b_{e_1},\dots,b_{e_q}$ for all $q < m$.

We will describe the relationship between the terms $y(e_p)_{e_q}$, with $p<q$, that appear in (\ref{eq:be}) and the vectors $v_q$. The key to this relationship is the connection between the different tuples $y(e_p)$ of vectors in $V$ that make up the point $y \in F_V(I \cup_i J)$. For each $p < m$ and each edge $d$ of $T$ such that $e_0 \subseteq d$, we have
\[ y(d)|_{e_p} \equiv y(e_p) \]
modulo translation and scaling by a non-negative constant $\gamma_{p,d}$, so that
\[ y(d)_{e_q} - y(d)_{e_p} = \gamma_{p,d}(y(e_p)_{e_q} - y(e_p)_{e_p}) = \gamma_{p,d} y(e_p)_{e_q}. \]
Recalling our maximality assumption for $J$, we know that $y(I \cup_i J)|_{J}$ is a constant tuple of vectors in $V$, but that $y(I \cup_i J)|_{e_p \cup J}$ is not (since $e_p \nsubseteq J$). Since $e_p \cap J \neq \varnothing$, it follows that $y(I \cup_i J)|_{e_p}$ is not a constant tuple, and so $y(d)|_{e_p}$ is not constant either. Therefore we have $\gamma_{p,d} > 0$.

It then follows that
\[ v_q - v_p = \gamma_p y(e_p)_{e_q} \]
where $\gamma_p = \sum_{e_0 \subseteq d} t_d r_d \gamma_{p,d}$. If the values $r_d$ for $e_0 \subseteq d$ were all equal to $0$, then we would have $v_1,\dots,v_m = 0$ and so in particular $v_m$ would be a convex combination of $b_{e_1} = v_1 = 0$. So we may assume that not all $r_d$ are equal to $0$, and hence that $\gamma_p > 0$.

From (\ref{eq:be}) we now have
\[ b_{e_m} = \lambda_{m-1}(v_m-v_{m-1}) + \lambda_{m-2}(v_m - v_{m-2}) + \dots + \lambda_1(v_m-v_1) + v_m \]
where
\[ \lambda_p = \frac{t_{e_p}r_{e_p}}{\gamma_p} \geq 0. \]
Our induction hypothesis is that
\[ v_q = \alpha_{1,q}b_{e_1} + \dots + \alpha_{q,q}b_{e_q} \]
where $\alpha_{p,q} \geq 0$ and $\alpha_{1,q} + \dots + \alpha_{q,q} = 1$. We therefore have
\[ b_{e_m} = (1+\lambda_1+\dots+\lambda_{m-1})v_m - \sum_{1 \leq p \leq q \leq m-1} \lambda_q\alpha_{p,q}b_{e_p} \]
and so
\[ v_m = \alpha_{1,m}b_{e_1} + \dots + \alpha_{m,m}b_{e_m} \]
where
\[ \alpha_{m,m} = \frac{1}{1+\lambda_1+\dots+\lambda_{m-1}} \]
and
\[ \alpha_{p,m} = \frac{\sum_{p \leq q \leq m-1} \lambda_q\alpha_{p,q}}{1+\lambda_1+\dots+\lambda_{m-1}} \]
for $p = 1,\dots,m-1$. Each of these coefficients is non-negative, and
\[ \alpha_{1,m} + \dots + \alpha_{m,m} = \frac{1+\sum_{1 \leq p \leq q \leq m-1} \lambda_q\alpha_{p,q}}{1+\lambda_1+\dots+\lambda_{m-1}} = 1 \]
as required, completing the induction.

Now return to the tree $T$, and choose distinct edges $e',e'' \in T$ that are maximal with respect to the condition that $e',e'' \subseteq J$. (Since $J$ is not an edge of the tree $T$, there are at least two such edges. Since $T$ is a tree, $e'$ and $e''$ must be disjoint.)

We apply the preceding calculations to the sequences of edges
\[ e' = e'_m \subsetneq e'_{m-1} \subsetneq \dots \subsetneq e'_0 = e \]
and
\[ e'' = e''_l \subsetneq e''_{l-1} \subsetneq \dots \subsetneq e''_0 = e \]
where $e$ is the minimal edge containing both $e'$ and $e''$. Note that $e'_{m-1},e''_{l-1} \nsubseteq J$ by the maximality of $e',e''$.

Our previous analysis now implies that the point
\[ v' := \sum_{e \subseteq d} t_d r_d y(d)_{e'} \]
is a convex combination of $b_{e'_m}, \dots, b_{e'_1}$ and hence satisfies the condition
\[ |v' - z_{e'_1}| < t_{e'_1}. \]
Similarly, the point
\[ v'' = \sum_{e \subseteq d}t_d r_d y(d)_{e''} \]
satisfies the condition
\[ |v'' - z_{e''_1}| < t_{e''_1}. \]
Moreover, by Lemma~\ref{lem:DV} we have
\[ |z_{e''_1} - z_{e'_1}| \geq t_{e'_1}+t_{e''_1} \]
and so we must have
\[ |v' - v''| > 0. \]
However, $y$ is in the image of the composition map
\[ F_V(I) \times F_V(J) \to F_V(I \cup_i J), \]
and $e \nsubseteq J$, so each $d$-tuple $y(d)$, for $e \subseteq d$, has the property that
\[ y(d)|_J \]
is a constant tuple of vectors in $V$. Since $e',e'' \subseteq J$, we have
\[ y(d)_{e'} = y(d)_{e''} \]
for all such $e$, and so $v' = v''$, a contradiction. Thus, in fact, $(x,t) \notin \mathring{S}_V(I \cup_i J)$ and so $\alpha_T(y,r,(z,t))$ is the basepoint in $\bar{S}_V(I \cup_i J)$, as required.
\end{proof}

We can now define the desired map of operads of spectra of the form (\ref{eq:map}).

\begin{definition}
Let $V$ be a finite-dimensional normed vector space, and let $I$ be a finite set with $|I| \geq 2$. Then we define an $O(V)$-equivariant map of spectra
\[ \alpha_I^{\#}: \Sigma^\infty F_V(I)_+ \to \Map(BD_V(I), \Sigma^\infty \bar{S}_V(I)) \]
by applying $\Sigma^\infty$ to the map $\alpha_I$ of Lemma~\ref{lem:alpha-map} and using the adjunction between smash product and mapping spectrum.
\end{definition}

\begin{proposition} \label{prop:alphah}
The maps $\alpha_I^{\#}$ together form a map of operads of spectra
\[ \alpha^{\#}: \Sigma^\infty F_V \to \Map(BD_V,\Sigma^\infty \bar{S}_V). \]
\end{proposition}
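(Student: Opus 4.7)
The plan is to obtain Proposition~\ref{prop:alphah} as a purely formal consequence of Lemma~\ref{lem:alpha-operad} via the smash--product/mapping spectrum adjunction. Recall that for a (reduced) cooperad $Q$ and operad $P$ of spectra, the termwise mapping spectrum $\Map(Q,P)$ carries an operad structure whose composition assigns to a pair $(f,g)$ of maps the composite $\circ_i^{P} \comp (f \smsh g) \comp \nu_i^{Q}$, where $\circ_i^{P}$ is operad composition in $P$ and $\nu_i^{Q}$ is cooperad decomposition in $Q$. Applied to $Q = BD_V$ and $P = \Sigma^{\infty}\bar{S}_V$ --- the latter being an operad of spectra obtained by applying the strong symmetric monoidal functor $\Sigma^{\infty}$ to the pointed--space operad $\bar{S}_V$ of Proposition~\ref{prop:barS} --- this yields the target operad of Proposition~\ref{prop:alphah}.

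To prove the proposition, I would verify, for each pair of finite sets $I,J$ and each $i \in I$, the commutativity of the square whose left vertical is $\alpha_I^{\#} \smsh \alpha_J^{\#}$, whose top horizontal is the operad composition $\circ_i$ on $\Sigma^{\infty}F_V$ (induced from that on $F_V$ by the strong monoidality of $\Sigma^\infty(-)_+$), whose right vertical is $\alpha_{I \cup_i J}^{\#}$, and whose bottom horizontal is the operad composition on $\Map(BD_V,\Sigma^{\infty}\bar{S}_V)$ just described. Passing across the adjunction, this square is equivalent to a square of spectra with domain $\Sigma^{\infty}F_V(I)_+ \smsh \Sigma^{\infty}F_V(J)_+ \smsh BD_V(I \cup_i J)$ and codomain $\Sigma^{\infty}\bar{S}_V(I \cup_i J)$. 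After using the isomorphism $\Sigma^{\infty}(X \smsh Y) \isom \Sigma^{\infty}X \smsh \Sigma^{\infty}Y$ to move $\Sigma^{\infty}$ past the relevant smash products, its two composites are precisely the two composites appearing in Lemma~\ref{lem:alpha-operad}, applied termwise with $\Sigma^{\infty}$. Commutativity in that lemma therefore implies the desired identity, and $O(V)$--equivariance and naturality in bijections of $I$ are inherited levelwise from the corresponding properties of the unstable maps $\alpha_I$, which are assembled from $O(V)$--equivariant ingredients (the norm on $V$, weighted barycentres, and the cooperad structure on $BD_V$).

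The main obstacle to proving Proposition~\ref{prop:alphah} was thus already confronted in Lemma~\ref{lem:alpha-operad}, which contains all the delicate geometric content --- in particular the verification that $\alpha_T(y,r,(z,t))$ lies in the basepoint of $\bar{S}_V(I \cup_i J)$ whenever $T$ does not arise as a grafting $T' \cup_i T''$ compatible with the given decomposition of $y = y' \circ_i y''$. Granted that lemma, Proposition~\ref{prop:alphah} requires only routine manipulation of the adjunction and the strong monoidal structure of $\Sigma^{\infty}$, with no further geometric input; in effect it repackages the unstable operad/cooperad compatibility of $\alpha$ as a stable operad map, which is what subsequent sections will exploit.
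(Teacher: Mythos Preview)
Your proposal is correct and follows essentially the same approach as the paper: the paper's proof is a two-sentence appeal to Lemma~\ref{lem:alpha-operad}, noting that the operad structure on the target comes from combining the cooperad structure on $BD_V$ with the operad structure on $\bar{S}_V$, and that compatibility of $\alpha^{\#}$ with these structures is exactly the content of that lemma. Your version simply spells out the adjunction step in more detail.
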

\begin{proof}
The operad composition maps on the right-hand side are given by combining the cooperad structure maps for $BD_V$ with the operad structure maps for $\bar{S}_V$. That $\alpha^{\#}$ respects the operad structures is a consequence of Lemma~\ref{lem:alpha-operad}.
\end{proof}

As we have already seen, our main result Theorem~\ref{thm:main} follows from the claim that $\alpha^{\#}$ is an equivalence. To prove this claim, we will apply the bar-cobar duality equivalence of Theorem~\ref{thm:bar-cobar} and show that $\alpha^{\#}$ is adjoint to a certain equivalence of quasi-cooperads. In order to explain this argument, we recall from \cite{ching:2012} the definition of a quasi-cooperad and of the left adjoint $\mathbb{B}$ appearing in Theorem~\ref{thm:bar-cobar}.

\section{Quasi-cooperads and the bar construction} \label{sec:quasi}

We first recall the notion of quasi-cooperad of spectra from \cite{ching:2012}. We start with a `pre-cooperad', a notion based on the full category of trees described in Definition~\ref{def:tree}.

\begin{definition} \label{def:q-coop}
A \emph{pre-cooperad of spectra} $\mathbf{Q}$ consists of
\begin{itemize}
  \item a functor $\mathbf{Q}: \mathsf{Tree} \to \spectra$;
  \item for each $I$-labelled tree $I$, $J$-labelled tree $T'$, and $i \in I$, a \emph{grafting map}
      \[ \mu_i: \mathbf{Q}(T) \smsh \mathbf{Q}(T') \to \mathbf{Q}(T \cup_i T'); \]
\end{itemize}
such that the maps $\mu_i$ are natural (with respect to morphisms in $\mathsf{Tree}$) and suitably associative (with respect to grafting of multiple trees). A \emph{quasi-cooperad} is a pre-cooperad for which the grafting maps $\mu_i$ are weak equivalences of spectra.
\end{definition}

\begin{remark}
Replacing spectra with pointed spaces in Definition~\ref{def:q-coop} we obtain definitions of pre/quasi-cooperad of pointed spaces. More generally, by replacing the smash product, we obtain a notion of pre-cooperad in any symmetric monoidal category. Given a choice of weak equivalences, we get a corresponding notion of quasi-cooperad.
\end{remark}

\begin{example}
If $Q$ is a quasi-cooperad of pointed spaces, then $\Sigma^\infty Q$ is a quasi-cooperad of spectra.
\end{example}

\begin{example} \label{ex:q-coop}
If $\mathbf{Q}$ is a cooperad (of spectra or pointed spaces), we define a corresponding quasi-cooperad by setting
\[ \mathbf{Q}(T) := \Smsh_{e \in E(T)} \mathbf{Q}(I_e) \]
where $I_e$ is the set of incoming edges to a given non-leaf edge $e$ of $T$. The value of the functor $\mathbf{Q}: \mathsf{Tree} \to \spectra$ on morphisms is given by the cooperad decomposition maps for $\mathbf{Q}$. In this case, the grafting maps are isomorphisms.
\end{example}

\begin{example} \label{ex:op-qcoop}
Let $\mathbf{P}$ be an operad for which the composition maps $\mathbf{P}(I) \smsh \mathbf{P}(J) \weq \mathbf{P}(I \cup_i J)$ are weak equivalences. Then we define a corresponding quasi-cooperad by setting
\[ \mathbf{P}(T) := \mathbf{P}(I). \]
The value of the functor $\mathbf{P}: \mathsf{Tree} \to \spectra$ on a morphism $f: I \isom I'$ in $\mathsf{Tree}$ is the induced isomorphism $\mathbf{P}(I) \isom \mathbf{P}(I')$. The grafting maps are given by the composition maps for $\mathbf{P}$.
\end{example}

\begin{example} \label{ex:map-qcoop}
Let $\mathbf{P}$ be an operad and define a quasi-cooperad $\dual \mathbf{P}$ by the collection of Spanier-Whitehead duals
\[ (\dual \mathbf{P})(T) := \Map(\mathbf{P}(T),S^0) \]
where
\[ \mathbf{P}(T) := \Smsh_{e \in E(T)} \mathbf{P}(I_e). \]
More generally, if $\mathbf{P}$ is an operad (of spectra or pointed spaces) and $\mathbf{Q}$ is a quasi-cooperad of spectra, then there is a quasi-cooperad of spectra $\Map(\mathbf{P},\mathbf{Q})$ given by
\[ \Map(\mathbf{P},\mathbf{Q})(T) := \Map(\mathbf{P}(T),\mathbf{Q}(T)). \]
\end{example}

We now turn to the left adjoint functor $\mathbb{B}$ appearing in Theorem~\ref{thm:bar-cobar}. As in most of this section, we can make this construction in the same way for operads of spectra or of pointed spaces (and hence, by adding a disjoint basepoint, unpointed spaces). The definition relies on a variant of the pointed space $\bar{w}(T)$ appearing in the definition of the ordinary bar construction $BP$.

\begin{definition} \label{def:w2}
Suppose $T,U$ are $I$-labelled trees. Then we let $\bar{w}(T;U)$ be the quotient of the space $[0,\infty]^{E(T)}$ by the subspace consisting of those points $r = (r_e)_{e \in E(T)}$ for which
\begin{itemize}
  \item $r_e = \infty$ for any $e \in E(T)$;
  \item $r_e = 0$ for any $e \in E(U)$.
\end{itemize}
Comparing with Definition~\ref{def:w} we see that $\bar{w}(T) = \bar{w}(T;\tau_I)$, with $\tau_I$ the corolla tree. The obvious quotient and inclusion by $0$ maps make up a functor
\[ \bar{w}: \mathsf{Tree}_I \times \mathsf{Tree}_I \to \based \]
and there are isomorphisms
\[ \bar{w}(T,U) \smsh \bar{w}(T',U') \isom \bar{w}(T \cup_i T',U \cup_i U') \]
given by identifying non-leaf edges of $T \cup_i T'$ with those of $T$ and of $T'$, and similarly for $U \cup_i U'$. The degrafting maps of Definition~\ref{def:w} are then the composites
\[ \bar{w}(T \cup_i T',\tau_{I \cup_i J}) \to \bar{w}(T \cup_i T',\tau_I \cup_i \tau_J) \isom \bar{w}(T,\tau_I) \smsh \bar{w}(T';\tau_J). \]
\end{definition}

\begin{definition} \label{def:BP}
Let $\mathbf{P}$ be an operad of spectra or pointed spaces. We then construct a pre-cooperad $\mathbb{B}\mathbf{P}$ as follows. For an $I$-labelled tree $T$, we define $\mathbb{B}\mathbf{P}(T)$ to be the following coend calculated over $U \in \mathsf{Tree}_I$
\[ \mathbb{B}\mathbf{P}(T) := \bar{w}(T;U) \smsh_{U \in \mathsf{Tree}_I} \mathbf{P}(U). \]
The grafting isomorphisms for $\bar{w}(T;U)$, together with the isomorphisms $\mathbf{P}(U) \smsh \mathbf{P}(U') \isom \mathbf{P}(U \cup_i U')$, induce structure maps
\[ \mathbb{B}\mathbf{P}(T) \smsh \mathbb{B}\mathbf{P}(T') \to \mathbb{B}\mathbf{P}(T \cup_i T'). \]
\end{definition}

\begin{proposition}
The structure maps above make $\mathbb{B}\mathbf{P}$ into a pre-cooperad (of spectra or pointed spaces, respectively).
\end{proposition}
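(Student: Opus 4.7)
The plan is to verify the three required conditions from Definition~\ref{def:q-coop}: that $T \mapsto \mathbb{B}\mathbf{P}(T)$ is a functor on all of $\mathsf{Tree}$, that the grafting maps $\mu_i$ are natural, and that they satisfy the two associativity coherences. None of these is conceptually deep; the whole point is that each structure descends formally from the corresponding structure on the functor $\bar{w}(-;-)$ introduced in Definition~\ref{def:w2}, combined with the tautological reindexing isomorphisms $\mathbf{P}(U) \smsh \mathbf{P}(U') \isom \mathbf{P}(U \cup_i U')$ on the coend factor.

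First I would establish the functoriality in $T$. For morphisms within $\mathsf{Tree}_I$ (i.e.\ inclusions $T \subseteq T'$), the induced map $\mathbb{B}\mathbf{P}(T) \to \mathbb{B}\mathbf{P}(T')$ comes directly from functoriality of $\bar{w}(-;U)$ in its first variable, applied inside the coend. For a general morphism $f:T \to T'$ in $\mathsf{Tree}$ whose underlying bijection $f: I \isom I'$ is not the identity, one uses $f$ to identify $\mathsf{Tree}_I \isom \mathsf{Tree}_{I'}$ as indexing categories and then runs the same argument; functoriality in $f$ is then immediate from the naturality of $\bar{w}$ and $\mathbf{P}$ with respect to bijections of label sets.

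Next I would define $\mu_i$. The key is that smashing commutes with the coend, so
\[ \mathbb{B}\mathbf{P}(T) \smsh \mathbb{B}\mathbf{P}(T') \;\isom\; \bar{w}(T;U) \smsh \bar{w}(T';U') \smsh_{(U,U') \in \mathsf{Tree}_I \times \mathsf{Tree}_J} \mathbf{P}(U) \smsh \mathbf{P}(U'). \]
On the integrand, apply the grafting isomorphism $\bar{w}(T;U) \smsh \bar{w}(T';U') \isom \bar{w}(T \cup_i T'; U \cup_i U')$ of Definition~\ref{def:w2} together with the reindexing isomorphism $\mathbf{P}(U) \smsh \mathbf{P}(U') \isom \mathbf{P}(U \cup_i U')$; then compose with the coend-inclusion for the index $U \cup_i U' \in \mathsf{Tree}_{I \cup_i J}$. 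This assembles into $\mu_i$ provided it descends through the coends, which in turn requires checking that a morphism $U \subseteq \tilde{U}$ in $\mathsf{Tree}_I$, respectively $U' \subseteq \tilde{U}'$ in $\mathsf{Tree}_J$, induces the corresponding refinement $U \cup_i U' \subseteq \tilde{U} \cup_i \tilde{U}'$ compatibly on both factors. For $\bar{w}$ this is precisely the naturality built into Definition~\ref{def:w2}, and for $\mathbf{P}(-)$ it is the statement that edges of a refined tree split as edges of the refined subtrees — so the check is essentially formal.

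Naturality of $\mu_i$ in $T$ and $T'$ reduces to the naturality of the $\bar{w}$-grafting isomorphism in its first pair of variables, which holds because $(T,T') \mapsto T \cup_i T'$ is itself a functor. For associativity, one verifies the two cases spelled out in Definition~\ref{def:operad} (serial grafting $T \cup_i T' \cup_j T''$ and parallel grafting $T \cup_i T' \cup_{i'} T''$): in each, the target trees agree strictly, and both $\bar{w}$ and the reindexing of $\mathbf{P}(-)$ only see the set of non-leaf edges of the combined tree, so the relevant pentagons commute on the nose. The main obstacle — really the only one requiring care — is the well-definedness step above, namely confirming that the morphism-indexed coherences in $\mathsf{Tree}_I$ and $\mathsf{Tree}_J$ match up under $(U,U') \mapsto U \cup_i U'$ for both $\bar{w}$ and the $\mathbf{P}(-)$ factor simultaneously; everything else is bookkeeping.
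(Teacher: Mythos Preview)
Your proposal is correct. The paper itself provides no proof of this proposition at all, treating it as a routine formal verification; your sketch fills in exactly the checks one would expect (functoriality via $\bar{w}(-;U)$, the coend definition of $\mu_i$ using the grafting isomorphisms of Definition~\ref{def:w2} and the edge-reindexing $\mathbf{P}(U)\smsh\mathbf{P}(U')\isom\mathbf{P}(U\cup_i U')$, and the naturality/associativity reductions), and your identification of the only delicate point---compatibility of the integrand maps with the coend relations under $(U,U')\mapsto U\cup_i U'$---is accurate.
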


\begin{remark} \label{rem:BP}
Suppose $P$ is an operad of unpointed spaces. We write $\mathbb{B}P$ for the pre-cooperad of pointed spaces given by applying $\mathbb{B}$ to the operad $P_+$ of pointed spaces. In this case, we can directly describe a point in $\mathbb{B}P(T)$ as comprising the following data:
\begin{itemize}
  \item an $I$-labelled tree $U$;
  \item a point $r \in [0,\infty]^{E(T)}$; that is a value $r_e \in [0,\infty]$ for each edge $e \in E(T)$;
  \item a point $y \in P(U)$, that is a point $y_{e'} \in P(I_{e'})$ for each edge $e' \in E(U)$.
\end{itemize}
subject to the following identifications:
\begin{enumerate} \itemsep=5pt
  \item The points $(U,r,y)$ and $(U',r,y')$ are identified when
    \begin{itemize}
    \item $U \leq U'$ and $y' \mapsto y$ under the quasi-operad composition map $P(U') \to P(U)$.
    \end{itemize}
  \item The point $(U,r,y)$ is identified with the basepoint in $\mathbb{B}P(T)$ when
\begin{itemize}
  \item $U \nsubseteq T$; or
  \item $r_e = \infty$ for any edge $e \in E(T)$; or
  \item $r_e = 0$ for any edge $e \in E(U)$.
\end{itemize}
\end{enumerate}
\end{remark}

\begin{definition} \label{def:cobar}
Let $\mathbf{Q}$ be a pre-cooperad (of spectra or pointed spaces). Then we define the \emph{cobar construction} $\mathbb{C}\mathbf{Q}$ to be the operad given by the ends
\[ \mathbb{C}\mathbf{Q}(I) := \Map_{U \in \mathsf{Tree}_I}(\bar{w}(U),\mathbf{Q}(U)) \]
with composition maps induced by
\[ \begin{split} \Map(\bar{w}(U),\mathbf{Q}(U)) \smsh \Map(\bar{w}(U'),\mathbf{Q}(U')) &\to \Map(\bar{w}(U) \smsh \bar{w}(U'),\mathbf{Q}(U) \smsh \mathbf{Q}(U')) \\
    &\to \Map(\bar{w}(U \cup_i U'),\mathbf{Q}(U \cup_i U')) \end{split} \]
given by combining the degrafting maps for $\bar{w}$ of Definition~\ref{def:w} with the pre-cooperad structure maps for $\mathbf{Q}$.
\end{definition}

\begin{proposition}
The functors $\mathbb{B}$ and $\mathbb{C}$ form an adjunction between the categories of operads and pre-cooperads, either of spectra or pointed spaces. For a cooperad $\mathbf{Q}$ viewed as a pre-cooperad as in Example~\ref{ex:q-coop}, there is an isomorphism of operads $\mathbb{C}\mathbf{Q} \isom C\mathbf{Q}$ between the cobar construction of Definition~\ref{def:cobar} and that of Proposition~\ref{prop:bar-cobar}.
\end{proposition}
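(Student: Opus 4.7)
My plan is to address the two claims of the proposition in turn: first establishing the $(\mathbb{B}, \mathbb{C})$ adjunction, and then identifying $\mathbb{C}\mathbf{Q}$ with $C\mathbf{Q}$ when $\mathbf{Q}$ is an honest cooperad viewed as a pre-cooperad.

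For the adjunction, the strategy is to express both $\Hom$-sets as equivalent sets of tree-indexed data and match them. A pre-cooperad morphism $\phi\colon \mathbb{B}\mathbf{P} \to \mathbf{Q}$ consists of maps $\phi_T\colon \mathbb{B}\mathbf{P}(T) \to \mathbf{Q}(T)$ natural in $T \in \mathsf{Tree}$ and compatible with grafting. By the universal property of the coend defining $\mathbb{B}\mathbf{P}(T) = \bar{w}(T;U) \smsh_{U \in \mathsf{Tree}_I} \mathbf{P}(U)$, each $\phi_T$ is equivalent to a dinatural family $\phi_{T,U}\colon \bar{w}(T;U) \smsh \mathbf{P}(U) \to \mathbf{Q}(T)$. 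Dually, an operad morphism $\tilde{\phi}\colon \mathbf{P} \to \mathbb{C}\mathbf{Q}$ consists of maps $\mathbf{P}(I) \to \Map_{U \in \mathsf{Tree}_I}(\bar{w}(U),\mathbf{Q}(U))$ compatible with composition, which by the smash/mapping-space adjunction together with the universal property of ends is the same data as a family $\tilde{\phi}_{I,U}\colon \mathbf{P}(I) \smsh \bar{w}(U) \to \mathbf{Q}(U)$ natural in $U$ and compatible with operadic composition.

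The correspondence between these two families uses the grafting isomorphism for $\bar{w}$ together with the decomposition $\mathbf{P}(U) = \Smsh_{e \in E(U)} \mathbf{P}(I_e)$, which allows one to build a map on $\bar{w}(T;U) \smsh \mathbf{P}(U)$ out of maps at the corollas sitting at each non-leaf edge. The grafting compatibility required of $\phi$ and the operadic compatibility required of $\tilde\phi$ translate into the same coherence condition on these edge-level maps, yielding a bijection. Naturality of the bijection in $\mathbf{P}$ and $\mathbf{Q}$ is formal.

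For the isomorphism $\mathbb{C}\mathbf{Q} \isom C\mathbf{Q}$, I would directly unwind definitions. When $\mathbf{Q}$ is promoted to a pre-cooperad via Example~\ref{ex:q-coop}, we have $\mathbf{Q}(U) = \Smsh_{e \in E(U)} \mathbf{Q}(I_e)$, with $\mathsf{Tree}$-functoriality supplied by cooperad decomposition and with grafting maps that are isomorphisms. The end $\Map_{U \in \mathsf{Tree}_I}(\bar{w}(U), \mathbf{Q}(U))$ defining $\mathbb{C}\mathbf{Q}(I)$ is then termwise the Spanier--Whitehead dual of the coend formula $\bar{w}(T) \smsh_T \mathbf{P}(T)$ from Definition~\ref{def:bar}, which by Proposition~\ref{prop:bar-cobar} is $C\mathbf{Q}(I)$. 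The operad composition maps on both sides are induced by the same degrafting morphisms for $\bar{w}$ combined with the cooperad (de)composition of $\mathbf{Q}$, so the identification is an isomorphism of operads.

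The main obstacle is bookkeeping rather than conceptual content: one must track dinaturality conditions in both $T$ and $U$ alongside the operadic and pre-cooperadic compatibilities, and verify that the grafting isomorphisms $\bar{w}(T;U) \smsh \bar{w}(T';U') \isom \bar{w}(T \cup_i T'; U \cup_i U')$ correctly interchange the operadic composition in $\mathbf{P}$ with the pre-cooperad grafting in $\mathbf{Q}$. The verification is ultimately formal, with the detailed argument carried out in~\cite{ching:2012}.
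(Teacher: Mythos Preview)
The paper does not actually supply a proof of this proposition: it is stated as a recollection from~\cite{ching:2012} and immediately followed by a remark. Your outline is the standard formal argument one would give, and it is correct in substance.

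One small imprecision worth flagging: in your second paragraph on $\mathbb{C}\mathbf{Q} \isom C\mathbf{Q}$, the phrase ``termwise the Spanier--Whitehead dual of the coend formula'' is not quite what is happening. The cobar construction $C\mathbf{Q}$ of Proposition~\ref{prop:bar-cobar} is described there as the ``entirely dual procedure'' to the bar construction, meaning it is literally the end $\Map_{T \in \mathsf{Tree}_I}(\bar{w}(T), \mathbf{Q}(T))$ with $\mathbf{Q}(T) = \Smsh_{e} \mathbf{Q}(I_e)$; no Spanier--Whitehead dualization enters. Once $\mathbf{Q}$ is promoted to a pre-cooperad as in Example~\ref{ex:q-coop}, the formula for $\mathbb{C}\mathbf{Q}(I)$ in Definition~\ref{def:cobar} is \emph{identical} to this, so the isomorphism is a tautology at the level of objects, and one only has to observe that the composition maps agree. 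Your final sentence on composition maps is correct; just drop the Spanier--Whitehead language.
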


\begin{remark} \label{rem:CB}
Let $P$ be an operad of pointed spaces or spectra. One of the central constructions of \cite{ching:2012} is a map of operads of the form
\[ \theta: WP \to CBP \isom \mathbb{C}BP \]
where $WP$ is the Boardman-Vogt W-construction, and $C$ and $B$ are the cobar and bar constructions introduced previously. Using the adjunction $(\mathbb{B},\mathbb{C})$ we obtain a map of pre-cooperads
\[ \theta^{\#}: \mathbb{B}WP \weq BP. \]
which can be shown to be an equivalence of quasi-cooperads. In fact, for a corolla $\tau_I$, the map
\[ \theta^{\#}_I: \mathbb{B}WP(I) \to BP(I) \]
is an isomorphism. (The above claims follow from the work of \cite{ching:2012} for an operad of spectra, but the constructions in that paper can largely be carried out in the same way in the context of pointed spaces, even though $\mathbb{B}$ and $C$ are no longer a Quillen equivalence in that setting.)
\end{remark}

\section{Proof of the duality theorem} \label{sec:proof}

We are now in position to prove Theorem~\ref{thm:main}. Recall that we constructed in Proposition~\ref{prop:alphah} a map of operads of spectra
\[ \alpha^{\#}: \Sigma^\infty F_V \to \Map(BD_V, \Sigma^\infty \bar{S}_V). \]
First we observe that the right-hand side can be written in a different way using quasi-cooperads.

\begin{lemma} \label{lem:conv}
There is an isomorphism of operads
\[ \Map(BD_V, \Sigma^\infty \bar{S}_V) \isom \mathbb{C}\Map(D_V, \Sigma^\infty \bar{S}_V) \]
where $\Map(D_V, \Sigma^\infty \bar{S}_V)$ is a quasi-cooperad constructed as in Example~\ref{ex:map-qcoop} from the operad $D_V$ and the quasi-cooperad $\Sigma^\infty \bar{S}_V$ associated to the operad structure on $\bar{S}_V$ as in Example~\ref{ex:op-qcoop}.
\end{lemma}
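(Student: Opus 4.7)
The plan is a straightforward adjunction argument followed by an operadic compatibility check. First, I would unpack the cobar construction on the right-hand side. By Definition~\ref{def:cobar}, together with the fact (Example~\ref{ex:op-qcoop}) that the quasi-cooperad $\Sigma^\infty \bar{S}_V$ takes the constant value $\Sigma^\infty \bar{S}_V(I)$ on every $T \in \mathsf{Tree}_I$, and the formula in Example~\ref{ex:map-qcoop}, the level-$I$ spectrum on the right is
\[ \int_{T \in \mathsf{Tree}_I} \Map\bigl(\bar{w}(T),\,\Map(D_V(T)_+,\, \Sigma^\infty \bar{S}_V(I))\bigr). \]
Applying the smash-mapping adjunction inside the end rewrites each term as $\Map(\bar{w}(T) \smsh D_V(T)_+,\, \Sigma^\infty \bar{S}_V(I))$. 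The assignment $(T,T') \mapsto \bar{w}(T) \smsh D_V(T')_+$ is a bifunctor $\mathsf{Tree}_I \times \mathsf{Tree}_I^{op} \to \based$, covariant in $\bar{w}$ via the inclusion-by-$0$ maps of Definition~\ref{def:w} and contravariant in $D_V(-)_+$ via the operad composition, as in Definition~\ref{def:bar1}. Therefore pulling the outer $\Map(-,\Sigma^\infty \bar{S}_V(I))$ out of the end converts that end into a $\Map$ out of the corresponding coend, and that coend is precisely $BD_V(I)$ by Definition~\ref{def:bar1}. This yields the desired levelwise isomorphism of spectra.

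Second, I would check that this termwise isomorphism intertwines the operad composition maps. The composition $\circ_i$ on the left is assembled from (i) the cooperad decomposition of $BD_V$, which by Definition~\ref{def:bar1} packages the degrafting maps $\bar{w}(T \cup_i T') \to \bar{w}(T) \smsh \bar{w}(T')$ of Definition~\ref{def:w} together with the canonical isomorphism $D_V(T \cup_i T')_+ \isom D_V(T)_+ \smsh D_V(T')_+$, and (ii) the operad composition $\Sigma^\infty \bar{S}_V(I) \smsh \Sigma^\infty \bar{S}_V(J) \to \Sigma^\infty \bar{S}_V(I \cup_i J)$. On the right, the cobar composition in Definition~\ref{def:cobar} uses exactly the same degrafting maps on $\bar{w}$, while the grafting maps of the quasi-cooperad $\Map(D_V, \Sigma^\infty \bar{S}_V)$ from Example~\ref{ex:map-qcoop} unpack to the very same smash decomposition of $D_V(T)_+$ combined with the same operad composition on $\Sigma^\infty \bar{S}_V$. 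Since both compositions are assembled from an identical list of ingredients, chasing them through the $\smsh$--$\Map$ adjunction establishes that the levelwise isomorphism above is a map of operads.

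The only real obstacle is bookkeeping: one must track the order of the three ingredients (degrafting on $\bar{w}$, smash decomposition of $D_V(T)_+$, and operad composition on $\Sigma^\infty \bar{S}_V$) and the variances of the functors involved in the coend/end interchange, to confirm that the two composite maps match on the nose and not up to a twist of factors. Beyond this routine diagram chase, the proof uses nothing more than the definitions of $\mathbb{C}$, $B$, and the quasi-cooperad $\Map(-,-)$.
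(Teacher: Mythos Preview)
Your proposal is correct and follows essentially the same approach as the paper: the paper's proof consists of a single displayed isomorphism expressing the standard end/coend--$\Map$ adjunction, which is exactly the chain you spell out in your first paragraph. Your treatment is more detailed than the paper's, in particular you explicitly verify the operadic compatibility that the paper leaves implicit, but the underlying argument is the same.
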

\begin{proof}
The isomorphism is given for a finite set $I$ by the following standard relationship between mapping spectra and ends/coends.
\[ \Map(\bar{w}(T) \smsh_{T \in \mathsf{Tree}_I} D_V(T), \Sigma^\infty \bar{S}_V(I)) \isom \Map_{T \in \mathsf{Tree}_I}(\bar{w}(T),\Map(D_V(T),\Sigma^\infty \bar{S}_V(T))). \qedhere \]
\end{proof}

Combining Lemma~\ref{lem:conv} with $\alpha^{\#}$ we obtain a map of operads
\[ \Sigma^\infty F_V \to \mathbb{C}\Map(D_V,\Sigma^\infty \bar{S}_V) \]
and hence, via the Quillen equivalence $(\mathbb{B},\mathbb{C})$, a map of pre-cooperads
\begin{equation} \label{eq:alpha'} \alpha_{\#}: \Sigma^\infty \mathbb{B}F_V \isom \mathbb{B}(\Sigma^\infty F_V) \to \Map(D_V,\Sigma^\infty \bar{S}_V). \end{equation}
Theorem~\ref{thm:main} now follows from the following claim.

\begin{theorem} \label{thm:qcoop}
The map $\alpha_{\#}$ of (\ref{eq:alpha'}) is an equivalence of quasi-cooperads.
\end{theorem}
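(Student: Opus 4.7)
The plan is to reduce Theorem~\ref{thm:qcoop} to a Spanier-Whitehead duality at the level of corollas, and then to verify that $\alpha_{\#}$ explicitly realizes this duality via the formula of Definition~\ref{def:alpha}. First, I would check that both sides of $\alpha_{\#}$ are quasi-cooperads: the source because $F_V$ is cofibrant (as established in Section~\ref{sec:FM}) combined with the Quillen equivalence of Theorem~\ref{thm:bar-cobar}; the target by applying Examples~\ref{ex:op-qcoop} and~\ref{ex:map-qcoop}, using that the composition maps of $\bar{S}_V$ are homeomorphisms (Proposition~\ref{prop:barS}). Since $\alpha_{\#}$ is a map of pre-cooperads and grafting is a levelwise weak equivalence on both sides, it then suffices to show that
\[ \alpha_{\#,\tau_I}: \Sigma^\infty \mathbb{B}F_V(\tau_I) \to \Map(D_V(I), \Sigma^\infty \bar{S}_V(I)) \]
is an equivalence of spectra for each corolla $\tau_I$.

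Second, I would identify $\mathbb{B}F_V(\tau_I)$ with the fiberwise one-point compactification over $\Delta(I)$ of the open subspace
\[ U_V(I) := \{(x,t) \in R_V(I) \mid x_i \neq x_j \text{ for all } i \neq j\} \subseteq R_V(I). \]
The required homeomorphism is induced by the tree stratification of the Fulton-MacPherson compactification: given tree data $(T, r, y)$ with all $r_e$ finite, one produces the point $(x,t) \in R_V(I)$ with $x_i = -\sum_{i \in e \in E(T)} t_e r_e y(e)_i$, where the representatives $y(e)$ are normalized by the weighted barycentre and norm conditions of Remark~\ref{rem:FM}. This point lies in $U_V(I)$ unless some $r_e = 0$, and diverges to infinity whenever some $r_e \to \infty$. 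This identification is essentially the description of $BF_V(I)$ appearing in the second author's thesis \cite{salvatore:1999}, transferred to the pre-cooperad $\mathbb{B}F_V(\tau_I) \simeq BF_V(I)$ via Remark~\ref{rem:CB}, which is available because $F_V$ is cofibrant.

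Third, I would apply the Dold-Puppe S-duality theorem (cited in the Introduction) fiberwise over $\Delta(I)$. The trivial vector bundle $R_V(I) \to \Delta(I)$ has rank $(|I|-1)\dim V$ and Thom space $\bar{S}_V(I)$, and Dold-Puppe produces an S-duality pairing
\[ U_V(I)^+ \wedge U_V(I)_+ \to \bar{S}_V(I) \]
given fiberwise by subtraction in $R_V(I)$, with pairs whose difference falls outside the open star-shaped neighborhood $\mathring{S}_V(I)$ collapsed to the basepoint of $\bar{S}_V(I)$. Under the identification of Step~2 (writing $a = -ry(I)$ for the bar-construction element and $b = z$ for the $D_V$-element), this pairing becomes $(a,b) \mapsto (b-a, t) = (z - ry(I), t)$, which is exactly the formula $\alpha_{\tau_I}$ of Definition~\ref{def:alpha}. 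Finally, by Theorem~\ref{thm:D} and Proposition~\ref{prop:D}, the inclusion $D_V(I) \hookrightarrow U_V(I)$ is a fiberwise homotopy equivalence over $\Delta(I)$, so restricting the Dold-Puppe pairing along this inclusion still gives a weak equivalence; this equivalence is precisely $\alpha_{\#,\tau_I}$.

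The main obstacle will be Step~2: producing the actual homeomorphism between the coend-presented $\mathbb{B}F_V(\tau_I)$ and the fiberwise one-point compactification of $U_V(I)$, and checking that the Dold-Puppe pairing agrees with $\alpha_{\tau_I}$ on the nose. This will require carefully tracking the boundary behavior of the tree strata as $r_e \to 0$ or $r_e \to \infty$, and is closely related to the convergence and barycentre arguments already deployed in the proof of Lemma~\ref{lem:alpha-operad}.
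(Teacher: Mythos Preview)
Your plan matches the paper's argument: reduce to corollas (using that both sides are quasi-cooperads), identify $\mathbb{B}F_V(\tau_I)$ with a one-point compactification of the configuration space, and apply Dold--Puppe fiberwise to recognise $\alpha_{\#,\tau_I}$ as an $S$-duality pairing.

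There is one genuine confusion in Step~2 that you should repair. A point of $\mathbb{B}F_V(\tau_I)$ carries no simplex parameter $t$: it is just $(r,y)$ with $r\in(0,\infty]$ and $y\in F_V(I)$, so your formula $x_i=-\sum_{i\in e}t_e r_e\,y(e)_i$ has nothing to supply the $t$ (which you also need to normalize $y(I)$ via Remark~\ref{rem:FM}). Consequently $\mathbb{B}F_V(\tau_I)$ is \emph{not} homeomorphic to the fiberwise compactification $U_V^+(I)$; what is true (the paper's Lemma~\ref{lem:phi}) is
\[ \Delta(I)_+ \smsh \mathbb{B}F_V(\tau_I)\ \isom\ U_V^+(I),\qquad (t,(r,y))\mapsto (r\,y(I),t), \]
with $t$ an external parameter. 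The paper then fixes a point $t\in\Delta(I)$, applies Dold--Puppe in the single fiber $R_V(I)_t\cong V^I/V$, checks that the resulting pairing $(z,t),(x,t)\mapsto(z-x,t)$ agrees on the nose with $\alpha_{\tau_I}$, and uses contractibility of $\Delta(I)$ (together with the fiberwise equivalence of Proposition~\ref{prop:D}) to deduce the global statement. Your Step~3 formula $(z-r\,y(I),t)$ is exactly this, once $t$ is understood as coming from the $D_V$ factor rather than from the bar construction. Two smaller corrections: the composition maps of $\bar{S}_V$ are only weak equivalences, not homeomorphisms (the decomposition maps for $\mathring{S}_V$ in Proposition~\ref{prop:S-coop} are merely open embeddings), which is still enough for Example~\ref{ex:op-qcoop}; and the Thom space of $R_V(I)\to\Delta(I)$ is $S_V(I)$, with $\bar{S}_V(I)$ a further quotient.
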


We will show two things which together imply this claim: (1) that $\mathbb{B}F_V$ is a quasi-cooperad; (2) that $\alpha_{\#}$ is an equivalence on corollas. The first follows from the comments in Remark~\ref{rem:CB} together with the result of \cite{salvatore:2021} that $F_V \isom WF_V$, but we will give a direct proof. To do this, we first observe that Remark~\ref{rem:BP} permits a fairly explicit description of the pointed spaces $\mathbb{B}F_V(T)$.

\begin{lemma} \label{lem:BF}
Let $T$ be an $I$-labelled tree. The pointed space $\mathbb{B}F_V(T)$ is a quotient of the space
\[ [0,\infty]^{E(T)} \times F_V(I) \]
by the subspace consisting of those points $(r,y)$ for which
\begin{itemize}
  \item $r_e = \infty$ for any $e \in E(T)$; or
  \item $y \in \mathrm{Im}(F_V(I/K) \times F_V(K) \to F_V(I))$ for some $K \subseteq I$ with $|K| \geq 2$, and either $r_K = 0$ or $K \notin E(T)$;
\end{itemize}
\end{lemma}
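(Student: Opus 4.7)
The plan is to apply Remark~\ref{rem:BP} directly, reducing every class in the coend $\mathbb{B}F_V(T)$ to a representative indexed by the corolla $\tau_I$. First I would define the continuous map
\[ \phi \colon [0,\infty]^{E(T)} \times F_V(I) \longrightarrow \mathbb{B}F_V(T), \qquad (r,\tilde{y}) \longmapsto [(\tau_I, r, \tilde{y})], \]
using that $F_V(\tau_I) = F_V(I)$ and $\tau_I \subseteq T$ for every $I$-labelled tree $T$. Surjectivity follows at once from Remark~\ref{rem:BP}(1): any representative $(U,r,y)$ with $y \in F_V(U)$ satisfies $[(U,r,y)] = [(\tau_I, r, \tilde{y})]$, where $\tilde{y} \in F_V(I)$ is the image of $y$ under the operad composition $F_V(U) \to F_V(I)$.

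Next I would identify the preimage of the basepoint. By Remark~\ref{rem:BP}(2), the class $[(\tau_I,r,\tilde{y})]$ is the basepoint exactly when some representative $(U',r,y')$ with $y' \mapsto \tilde{y}$ meets one of: (a) $r_e = \infty$ for some $e \in E(T)$; (b) $U' \nsubseteq T$; or (c) $r_e = 0$ for some $e \in E(U')$. Condition (a) depends only on $r$ and is the first alternative in the lemma. Conditions (b) and (c) amount to the existence of a non-leaf edge $K \in E(U')$ (hence $|K| \geq 2$) with either $K \notin E(T)$ or $r_K = 0$. Given such a $K$, we have $\tilde{y} \in \mathrm{Im}(F_V(U') \to F_V(I)) \subseteq \mathrm{Im}(F_V(I/K) \times F_V(K) \to F_V(I))$, recovering the second alternative. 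Conversely, given a $K$-decomposition of $\tilde{y}$, the $I$-labelled tree $U'$ whose non-leaf edges are $\{I, K\}$ furnishes a lift $y' \in F_V(U')$ of $\tilde{y}$, and $(U',r,y')$ is basepoint under the given hypothesis on $K$.

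Finally I would verify that $\phi$ induces the claimed homeomorphism onto $\mathbb{B}F_V(T)$. The coend $\mathbb{B}F_V(T) = \bar{w}(T;U) \smsh_{U \in \mathsf{Tree}_I} F_V(U)_+$ is a quotient of $\bigvee_U \bar{w}(T;U) \smsh F_V(U)_+$, and each summand factors through $\bar{w}(T;\tau_I) \smsh F_V(I)_+$ via the quotient $\bar{w}(T;\tau_I) \to \bar{w}(T;U)$ on the first factor and the operadic composition $F_V(U) \hookrightarrow F_V(I)$ on the second. Since the Fulton-MacPherson composition maps are closed embeddings (reflecting the tree stratification of the configuration space), the coend identifications align precisely with the subspace collapse described in the lemma.

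The main obstacle is the bookkeeping in the basepoint identification, specifically the equivalence between ``some large $U'$ witnesses the basepoint via several edges'' and ``a single edge $K$ of some $U'$ suffices''. The key observation is that any problematic $K \in E(U')$ can be isolated by passing to the smallest sub-tree having $K$ as an edge---the tree with non-leaf edges $\{I, K\}$---since the operad composition factors through this smaller tree, preserving both the lift of $\tilde{y}$ and the basepoint witness.
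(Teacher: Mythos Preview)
Your proposal is correct and follows exactly the approach the paper intends: the lemma is stated in the paper without proof, as an immediate consequence of the explicit description of $\mathbb{B}P(T)$ in Remark~\ref{rem:BP}, specialized to $P = F_V$. Your argument fills in precisely the details the paper leaves to the reader, and you correctly isolate the one feature of $F_V$ that makes the description this clean---that the operad composition maps are closed embeddings, so every point has a canonical representative over the corolla $\tau_I$ and the coend identifications collapse to the single subspace described.
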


\begin{lemma}
The pre-cooperad $\mathbb{B}F_V$ is a quasi-cooperad.
\end{lemma}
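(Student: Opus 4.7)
The plan is to show each grafting map
\[ \mu_i : \mathbb{B}F_V(T) \smsh \mathbb{B}F_V(T') \to \mathbb{B}F_V(T \cup_i T') \]
is a weak equivalence by reducing both sides to their ``top stratum'' summand in the defining coend. The first step is to show, for each tree $T$, that the inclusion of the $U = T$ piece of the coend in Definition~\ref{def:BP},
\[ \iota_T : \bar{w}(T; T) \smsh F_V(T)_+ \hookrightarrow \mathbb{B}F_V(T), \]
is a weak equivalence. Once this is known, the second step is a formal diagram chase: the restriction of $\mu_i$ to top-stratum pieces is the canonical isomorphism
\[ \bigl(\bar{w}(T;T) \smsh F_V(T)_+\bigr) \smsh \bigl(\bar{w}(T';T') \smsh F_V(T')_+\bigr) \xrightarrow{\;\cong\;} \bar{w}(T \cup_i T'; T \cup_i T') \smsh F_V(T \cup_i T')_+, \]
coming from the smash isomorphism of Definition~\ref{def:w2} together with the identification $F_V(T) \times F_V(T') \cong F_V(T \cup_i T')$ induced by the edge-set decomposition $E(T \cup_i T') \cong E(T) \sqcup E(T')$. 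Since $\iota_T$, $\iota_{T'}$, and $\iota_{T \cup_i T'}$ are equivalences, a commuting square argument then shows $\mu_i$ is a weak equivalence.

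To prove $\iota_T$ is an equivalence, I would filter $\mathbb{B}F_V(T)$ by subtree: for each $U \leq T$, let $F^U \subseteq \mathbb{B}F_V(T)$ denote the image in the coend of the terms indexed by $U' \leq U$. Starting from the corolla $\tau_I$ and building up to $T$ by adding one edge at a time, I would argue by induction on $|E(T) \setminus E(U)|$ that each inclusion $F^U \hookrightarrow F^{U^+}$ (with $U^+$ obtained from $U$ by adding a single edge) is a weak equivalence. The key computation is the case $|E(T) \setminus E(U)| = 1$, where $\bar{w}(T;U)$ is the quotient of the cube $[0,\infty]^{E(T)}$ by the union of all the $\{r_e = \infty\}$ faces and all the $\{r_e = 0\}$ faces for $e \in E(U)$; this collapsed subcomplex is a contractible subcube-complex (all but one face of the cube boundary), so the quotient is contractible. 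This contractibility, combined with the closed cofibration $F_V(U^+) \hookrightarrow F_V(U)$ coming from the cofibrancy of $F_V$ (Proposition~\ref{prop:FM}), shows that the cofiber of the filtration step is contractible and hence the inclusion is an equivalence.

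The main obstacle is extending the inductive argument to handle pairs $U \leq U^+$ where a single edge is added but $\bar{w}(T; U)$ already fails to be contractible on its own (this occurs when the complement $|E(T) \setminus E(U)| \geq 2$ at intermediate stages of the induction). The relative argument is cleaner than the absolute one: what must be shown is that the relative pair $(\bar{w}(T; U^+) \smsh F_V(U^+)_+, \bar{w}(T; U) \smsh F_V(U)_+)$ becomes homotopically trivial after passing to the coend. I would handle this by noting that the identifications in the coend exactly glue these pairs together via the closed embeddings $F_V(U^+) \hookrightarrow F_V(U)$, so the relative cofiber is controlled by the quotient $\bar{w}(T;U^+)/\bar{w}(T;U)$, which by a direct cube-quotient computation is always a wedge of spheres that gets killed after smashing with the appropriate quotient $F_V(U)_+/F_V(U^+)_+$ in the coend. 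The bookkeeping here is the most delicate part of the argument; alternatively, one can short-circuit it by invoking the cofibrancy of $F_V$ to identify $F_V$ with $WF_V$ as in Remark~\ref{rem:CB}, but the direct filtration argument above is preferred.
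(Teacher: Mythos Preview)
Your central claim---that the map
\[ \iota_T : \bar{w}(T; T) \smsh F_V(T)_+ \to \mathbb{B}F_V(T) \]
is a weak equivalence---is false, and this breaks the entire strategy. Take $T = \tau_I$ the corolla with $|I| \geq 3$. The only $U$ with $U \leq \tau_I$ is $\tau_I$ itself, but the coend identifications for larger $U'$ still force collapses: by Remark~\ref{rem:BP} (or Lemma~\ref{lem:BF}), one has
\[ \mathbb{B}F_V(\tau_I) \;\cong\; S^1 \smsh \bigl(F_V(I)/\partial F_V(I)\bigr), \]
and your map $\iota_{\tau_I}$ is the quotient $S^1 \smsh F_V(I)_+ \to S^1 \smsh (F_V(I)/\partial F_V(I))$. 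For $|I| \geq 3$ the boundary $\partial F_V(I)$ is nonempty and the quotient changes homology (Poincar\'e--Lefschetz duality relates the two sides, but they are not equivalent). So $\iota_T$ is not even an inclusion as you wrote---it is a proper quotient---and the commuting-square argument in your second step collapses. Your filtration sketch does not rescue this: since every point of $\mathbb{B}F_V(T)$ already has a representative at the $U = \tau_I$ stage (Lemma~\ref{lem:BF} again), the images $F^U$ you define are all equal to $\mathbb{B}F_V(T)$ and the filtration is trivial.

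The paper's argument is entirely different and uses geometry specific to $F_V$ rather than general cofibrancy. The composition map $F_V(I) \times F_V(J) \to F_V(I \cup_i J)$ is the inclusion of a boundary face in a manifold with corners, so it extends to a collar $\circ_i^{\bullet}: [0,\infty] \times F_V(I) \times F_V(J) \to F_V(I \cup_i J)$ compatible with the face structure. Using the collar parameter $s$, one writes down an explicit map $\delta: \mathbb{B}F_V(T' \cup_i T'') \to \mathbb{B}F_V(T') \smsh \mathbb{B}F_V(T'')$ and checks by hand that $\delta$ is a pointed homotopy inverse to the grafting map. (The alternative you mention at the end---invoking $F_V \cong WF_V$ from \cite{salvatore:2019} together with Remark~\ref{rem:CB}---does work, and the paper notes this as well, but the direct collar argument is what is actually carried out.)
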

\begin{proof}
Recall that the operad composition map
\[ \circ_i: F_V(I) \times F_V(J) \to F_V(I \cup_i J) \]
is the inclusion of a face in a manifold with corners. This inclusion extends to a collar neighbourhood of the face in a way that preserves the face structure. In other words there is an inclusion
\[ \circ_i^{\bullet}: [0,\infty] \times F_V(I) \times F_V(J) \to F_V(I \cup_i J) \]
such that: $\circ_i^0 = \circ_i$, and if $y \in F_V(I)$ or $y' \in F_V(J)$ is in the image of some composition map, then $y \circ_i^s y'$ is in the image of the corresponding map, compare \cite{salvatore:2019a}.

Now let $T'$ be an $I$-labelled tree, and $T''$ a $J$-labelled tree. We now use the explicit description in Lemma~\ref{lem:BF} to define a map
\[ \delta: \mathbb{B}F_V(T' \cup_i T'') \to \mathbb{B}F_V(T') \smsh \mathbb{B}F_V(T'') \]
by
\[ \delta(r,y) := \begin{cases} ((r|_{T'},y'), (r|_{T''} + s|_J,y'')) & \text{if $y = y' \circ_i^s y''$}; \\ * & \text{otherwise}. \end{cases} \]
Here $r|_{T''} + s|_J \in [0,\infty]^{E(T'')}$ is given by the restriction of $r \in [0,\infty]^{E(T' \cup_i T'')}$ to the edges of $T''$, with the value $s \in [0,\infty]$ added to the entry corresponding to the root edge $J \in T''$.

We claim that $\delta$ is a pointed homotopy inverse to the pre-cooperad structure map for $\mathbb{B}F_V$. It is straightforward to check that $\delta\gamma$ is the identity on $\mathbb{B}F_V(T') \smsh \mathbb{B}F_V(T'')$. A pointed homotopy from the identity to $\gamma\delta$ is provided by the map
\[ [0,\infty]_+ \smsh \mathbb{B}F_V(T' \cup_i T'') \to \mathbb{B}F_V(T' \cup_i T'') \]
given by
\[ (u,(r,y)) \mapsto \begin{cases} (r + \min\{u,s\}|_J,y' \circ^{se^{-u}}_i y'')) & \text{if $y = y' \circ_i^s y''$}; \\ (r+u|_J,y) & \text{otherwise}. \end{cases} \qedhere \]
\end{proof}

To show that a map $\alpha_{\#}: \Sigma^\infty \mathbb{B}F_V \to \Map(D_V,\Sigma^\infty \bar{S}_V)$ of quasi-cooperads is an equivalence, it is sufficient to show that it is an equivalence on corollas. It follows from Lemma~\ref{lem:BF} that $\mathbb{B}F_V(I) := \mathbb{B}F_V(\tau_I)$ has a simple description: there is a homeomorphism
\[ \mathbb{B}F_V(I) \isom \Sigma F_V(I)/\partial F_V(I) \isom \Sigma \mathring{F}_V(I)^+ \]
where $\partial F_V(I)$ denotes the subspace of decomposable elements of $F_V(I)$, that is, the boundary of this manifold with corners, and $\mathring{F}_V(I)^+$ is the one-point compactification of the open stratum in $F_V(I)$.

Recall that $\mathring{F}_V(I)$ is the space of $I$-indexed configurations in $V$, modulo translation and positive scaling. The suspension coordinate can be used to build the scaling back in, so we can identify $\mathbb{B}F_V(I)$ with the one-point compactification of the configuration space, modulo translation. To make this description more precise, we build a version of the configuration space based on the barycentric operad that also underlies the operads $D_V$ and $\bar{S}_V$.

\begin{definition} \label{def:U}
For a finite set $I$ of cardinality at least $2$, we set
\[ U_V(I) := \{ (x,t) \in R_V(I) \; | \; x_i \neq x_j \text{ for $i \neq j$ in $I$}\}. \]
For each $t \in \Delta(I)$, the fibre $U_V(I)_t$ is therefore the subset of the configuration space of $I$-tuples in $V$ consisting of those configuration that also satisfy the weighted barycentre condition with respect to $t$.

We consider the fibrewise (over $\Delta(I)$) one-point compactification of $U_V(I)$ modulo the section at infinity:
\[ U_V^+(I) := S_V(I)/(S_V(I) - U_V(I)). \]
\end{definition}

\begin{remark}
The spaces $U_V(I)$ do not form a suboperad of $R_V(I)$. However, they are part of what we might call a `quasi-operad' by analogy with our notion of quasi-cooperad. We can define a pre-operad $P$ in a manner dual to that of a pre-cooperad: for each $I$-labelled tree $T$ we have a space $P(T)$; for inclusions $T \subseteq T'$ we have a map $P(T') \to P(T)$, and we have `degrafting maps' $P(T \cup_i T') \to P(T) \times P(T')$. A `quasi-operad' is a pre-operad for which the degrafting maps are weak equivalences. This definition is in fact a special case of the dendroidal Segal spaces of Cisinski and Moerdijk~\cite{cisinski/moerdijk:2013} (restricted to trees with no unary or nullary vertices).

The operad $R_V$ is, in particular, a quasi-operad, and it admits a sub-quasi-operad $U_V$ given on corollas by the spaces of Definition~\ref{def:U}. In a similar manner the fibrewise one-point compactifications $U_V^+(I)$ form part of a quasi-cooperad of pointed spaces given by
\[ U_V^+(T) := S_V(T)/(S_V(T) - U_V(T)). \]
\end{remark}

\begin{lemma} \label{lem:phi}
There is a homeomorphism
\[ \phi: \Delta(I)_+ \smsh \mathbb{B}F_V(I) \isom U_V^+(I) \]
given by
\[ (t,(r,y)) \mapsto (ry(I)_i)_{i \in I} \]
where the component $I$-tuple $y(I)$ of the point $y \in F_V(I)$ is chosen to satisfy the weighted barycentre and norm conditions of Remark~\ref{rem:FM} with respect to the point $t \in \Delta(I)$.
\end{lemma}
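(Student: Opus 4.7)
The plan is to identify both sides as one-point compactifications of locally compact Hausdorff spaces and then to exhibit a homeomorphism between those open strata by the stated formula.

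First, unpack the left-hand side. Applied to $T = \tau_I$ (so $E(\tau_I) = \{I\}$), Lemma~\ref{lem:BF} presents $\mathbb{B}F_V(I)$ as the quotient of the compact Hausdorff space $[0,\infty] \times F_V(I)$ by the closed subspace of pairs $(r,y)$ with $r \in \{0,\infty\}$ or $y \in \partial F_V(I)$, where $\partial F_V(I) := \bigcup \mathrm{Im}(F_V(I/K) \times F_V(K) \to F_V(I))$, the union taken over $K \subsetneq I$ with $|K| \geq 2$. The quotient of a compact Hausdorff space by a nonempty closed subspace is canonically the one-point compactification of the complement, so $\mathbb{B}F_V(I) \isom ((0,\infty) \times \mathring{F}_V(I))^+$. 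Applying the standard identity $Y_+ \smsh Z^+ \isom (Y \times Z)^+$ for locally compact Hausdorff $Y, Z$, we obtain
\[ \Delta(I)_+ \smsh \mathbb{B}F_V(I) \isom (\Delta(I) \times (0,\infty) \times \mathring{F}_V(I))^+. \]
On the right-hand side, Proposition~\ref{prop:R-bundle} trivializes the bundle so that $S_V(I) \isom R_V(I)^+$, and hence $U_V^+(I) = S_V(I)/(S_V(I) - U_V(I))$ is the one-point compactification of the open subspace $U_V(I) \subseteq R_V(I)$.

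It therefore suffices to construct a homeomorphism
\[ \phi^{\circ}: \Delta(I) \times (0,\infty) \times \mathring{F}_V(I) \; \tilde{\longrightarrow} \; U_V(I) \]
by the rule $(t, r, y) \mapsto ((r\, y(I)_i)_{i \in I}, t)$, where $y(I) \in V^I$ denotes the unique representative of its equivalence class (modulo translation and positive scaling, as in Definition~\ref{def:FM}) satisfying the weighted barycentre condition $\sum_i t_i y(I)_i = 0$ and the weighted norm condition $\sum_i t_i |y(I)_i| = 1$ of Remark~\ref{rem:FM}. This representative exists uniquely and depends continuously on $(y, t)$ because the action of translation by $V$ and positive scaling is free on non-constant tuples, and the two conditions precisely fix the translational and scaling degrees of freedom. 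The image lies in $U_V(I)$ since $r > 0$ and $y \in \mathring{F}_V(I)$ force the components $r\,y(I)_i$ to be pairwise distinct. The inverse sends $(x, t)$ to $(t, r, y)$ with $r := \sum_i t_i |x_i|$ (positive, because the distinct $x_i$ cannot all vanish while satisfying $\sum_i t_i x_i = 0$) and $y$ the unique point of $\mathring{F}_V(I)$ whose normalized representative is $x/r$; the two maps are manifestly continuous and mutually inverse.

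The main point to verify, and the mild technical obstacle, is that $\phi^{\circ}$ extends across the one-point compactifications correctly, i.e., that the three subsets collapsed to form the basepoint on the left all map into $S_V(I) - U_V(I)$ on the right: when $r = 0$, all components $x_i = 0$ coincide; when $r \to \infty$, $\sum_i t_i |x_i| \to \infty$ so $(x, t) \to \infty$ in $R_V(I)^+$; and when $y \in \partial F_V(I)$, some subtuple $y(I)|_K$ is constant for $|K| \geq 2$, forcing $x_i = x_j$ for $i, j \in K$. Each case lies in the collapsed subspace defining the basepoint of $U_V^+(I)$, so $\phi^{\circ}$ and its inverse extend continuously to yield the desired homeomorphism $\phi$.
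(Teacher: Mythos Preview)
Your formula for $\phi^\circ$ and its inverse are exactly right and match the paper's brief proof, and your case-by-case check that the collapsed strata correspond is correct and more thorough than what the paper provides. However, the logical framework you use to conclude that $\phi$ is a homeomorphism has a genuine gap.

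The identity $Y_+ \smsh Z^+ \isom (Y \times Z)^+$ you invoke fails when $Y$ is not compact, and here $Y = \Delta(I)$ is the \emph{open} simplex. For instance, with $|I|=2$ and $\dim V = 1$ the space $\Delta(I)_+ \smsh (V^I/V)^+ \cong (0,1)_+ \smsh S^1$ is not even compact, whereas $(\Delta(I) \times V^I/V)^+ \cong (\R^2)^+ = S^2$. For the same reason the Thom space $S_V(I)$ is not the one-point compactification $R_V(I)^+$, and $U_V^+(I)$ is not $U_V(I)^+$. So neither side of the lemma is a one-point compactification, and your reduction ``a homeomorphism of the open strata extends automatically to the one-point compactifications'' does not apply.

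The repair is easy and actually uses exactly the checks you already performed. Both sides are quotients of the form $(\Delta(I) \times K)/(\Delta(I) \times L)$ with $K$ compact Hausdorff and $L$ closed: on the left $K = [0,\infty] \times F_V(I)$ and $L = \{0,\infty\} \times F_V(I) \cup [0,\infty] \times \partial F_V(I)$; on the right $K' = (V^I/V)^+$ and $L' = (V^I/V)^+ \setminus (C_V(I)/V)$. Your formula lifts to a continuous map $\Phi:\Delta(I) \times K \to \Delta(I) \times K'$ of the form $(t,k) \mapsto (t,f(t,k))$, and your basepoint analysis is precisely the statement that $\Phi(\Delta(I) \times L) \subseteq \Delta(I) \times L'$. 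Hence $\Phi$ descends to a continuous map of quotients; the same applies to the inverse. The paper itself simply writes down the inverse and asserts continuity at the basepoint, so once patched your argument is in fact more detailed than the original.
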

\begin{proof}
An inverse to $\phi$ is given by the following construction. Given $(x,t) \in U_V(I)$, there is a unique $r \in (0,\infty)$ such that $\frac{1}{r}x$ satisfies the weighted norm condition with respect to $t$. We therefore map $(x,t)$ to the point $(t,(r,[x]))$ in $\Delta(I) \times \mathbb{B}F_V(I)$, where $[x]$ denotes the point in $\mathring{F}_V(I) \subseteq F_V(I)$ determined by the configuration $x$. This construction extends continuously to the basepoint in $U_V^+(I)$.
\end{proof}

It remains to show that the fibrewise one-point compactification $U_V^+(I)$ is a model for the Spanier-Whitehead dual of the restricted little disc space $D_V(I)$. We see this as the (fibrewise) application of the following general Spanier-Whitehead duality result of Dold and Puppe:

\begin{theorem}[Dold-Puppe~\cite{dold/puppe:1980}] \label{thm:dold/puppe}
Let $U$ be the complement of a finite cell complex in $S^n$, $D \subseteq U$ a finite cell complex, and $B$ a star-shaped open neighbourhood of the origin in $\R^n$, such that:
\begin{itemize}
  \item the inclusion $D \subseteq U$ is a weak homotopy equivalence;
  \item vector addition in $\R^n$ restricts to a map $D \times B \to U$.
\end{itemize}
Let $U^+ = S^n/(S^n - U)$ and $B^+ = S^n/(S^n - B)$ be the one-point compactifications of $U$ and $B$ respectively. Then the map
\[ \sigma: D_+ \smsh U^+ \to B^+ \homeq S^n \]
given by
\[ (z,x) \mapsto z-x \]
is an $n$-duality evaluation map. In other words, $\sigma$ induces an equivalence of spectra
\[ \Sigma^\infty U^+ \weq \Map(D_+,\Sigma^\infty B^+). \]
In particular, since $B^+ \homeq S^n$, it follows that
\[ \Sigma^\infty U^+ \homeq \Sigma^n\dual(D_+). \]
\end{theorem}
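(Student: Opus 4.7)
The plan is to establish this as a consequence of classical Alexander/Spanier-Whitehead duality for finite subcomplexes of $S^n$. First I would verify that $\sigma$ is a well-defined pointed map. The only nontrivial basepoint to check is the image of $S^n - U$ in $U^+$: if $z \in D$ and $x \in S^n - U$, then $x \neq z + b$ for any $b \in B$ (since otherwise $x \in D + B \subseteq U$), so $x - z \notin B$ and $\sigma(z,x)$ lies in the collapsed subspace $S^n - B$ of $B^+$. Thus the hypothesis that vector addition restricts to a map $D \times B \to U$ is exactly what is needed to ensure that $\sigma$ descends to the one-point compactifications.

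Second, since $B$ is star-shaped at the origin, the straight-line contraction gives a canonical pointed homotopy equivalence $B^+ \homeq S^n$, so the target of the adjoint
\[ \tilde\sigma \colon \suspec U^+ \to \Map(D_+, \suspec B^+) \]
is equivalent to $\Sigma^n \dual (D_+)$. Showing that $\sigma$ is an $n$-duality evaluation map therefore reduces to showing that $\tilde\sigma$ is a weak equivalence of spectra.

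The core of the argument is classical Alexander duality: for a finite subcomplex $K$ of $S^n$, the Thom-like collapse $S^n/(S^n - K)$ is $n$-dual to $K_+$ via an evaluation pairing of the form $(z,x) \mapsto x - z$, after identifying a small neighborhood of $K$ with a bundle of open balls. I would first apply this to $K = D$, obtaining a Spanier-Whitehead equivalence $\suspec (S^n/(S^n - D)) \weq \Sigma^n \dual(D_+)$ realized by a pairing of the same form as $\sigma$. Then I would show that the projection $S^n/(S^n - D) \to S^n/(S^n - U) = U^+$ induced by the inclusion $S^n - U \subseteq S^n - D$ is a weak equivalence, and that the two pairings are compatible. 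The geometric input from $D \weq U$ together with $D + B \subseteq U$ should supply $U$ as a tubular neighborhood of $D$ up to homotopy, so excision along this neighborhood gives the desired equivalence of quotients.

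The main obstacle I anticipate is being rigorous about the neighborhood structure of $D$ inside $U$: the hypotheses do not furnish an actual tubular neighborhood, only a homotopy-theoretic replacement parametrized by $B$. An alternative route to run in parallel is to verify the duality pairing directly via a slant product argument on homology, using Alexander duality to compute $H_*(U^+)$ from $H_*(S^n - U)$ and checking that the map induced by $\sigma$ realizes this as the dual of $H_*(D_+)$; this approach bypasses the explicit comparison of $S^n/(S^n - D)$ with $U^+$ but requires essentially the same geometric input to conclude.
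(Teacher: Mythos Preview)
Your strategy is essentially the same as the paper's: reduce to classical Alexander duality for $D \subseteq S^n$, then compare $S^n/(S^n-D)$ with $U^+ = S^n/(S^n-U)$. The well-definedness check and the identification $B^+ \simeq S^n$ are exactly right.

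The one place where your plan is less sharp than the paper's is the step showing $S^n/(S^n-D) \to U^+$ is an equivalence. You frame this as an excision or tubular-neighbourhood problem and worry that the hypotheses don't give you a genuine tubular neighbourhood. That worry is a red herring. The paper avoids neighbourhood geometry entirely by passing to mapping cones and observing that the inclusion of complements $S^n - U \hookrightarrow S^n - D$ is itself a stable equivalence: it is $(n-1)$-Alexander-dual to the inclusion $D \hookrightarrow U$, which is a weak equivalence by hypothesis. Once you know the complements are stably equivalent, the cofibre comparison $S^n \cup C(S^n - U) \to S^n \cup C(S^n - D)$ is automatic, and collapsing cones (using that $S^n - U$ is a finite subcomplex of $S^n$, hence cofibrantly included) gives the quotient comparison you want. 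No tubular neighbourhood or explicit excision is needed; the geometric hypothesis $D + B \subseteq U$ is used only for well-definedness of $\sigma$, not for this equivalence.

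Your alternative homology/slant-product route would also work, but it is just the same Alexander-duality fact read off in (co)homology, so it does not actually bypass anything.
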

\begin{proof}
We have the following commutative diagram
\[ \begin{diagram}
  \node{D_+ \smsh (\R^n \cup C(\R^n - D))} \arrow{e} \arrow{s,l}{\sim} \node{\R^n \cup C(\R^n - \{0\})} \arrow{s,r}{\sim} \\
  \node{D_+ \smsh (S^n \cup C(S^n - D))} \arrow{e} \node{S^n \cup C(S^n - \{0\})} \\
  \node{D_+ \smsh (S^n \cup C(S^n - U))} \arrow{e} \arrow{n,l}{\sim} \arrow{s,l}{\sim} \node{S^n \cup C(S^n - B)} \arrow{s,r}{\sim} \arrow{n,r}{\sim}  \\
  \node{D_+ \smsh U^+} \arrow{e} \node{B^+}
\end{diagram} \]
where all the horizontal maps are given by $(z,x) \mapsto z-x$, and $A \cup CB$ denotes the mapping cone of the inclusion $B \subseteq A$ (with cone point as the basepoint). The top and middle vertical maps are induced by inclusions of subsets, and the bottom vertical maps are given by collapsing cones to the basepoint.

The top horizontal map is an $n$-duality evaluation map by \cite[3.6]{dold/puppe:1980}, so it is sufficient to show that the vertical maps are stable equivalences. This is clear for the right-hand column of maps. For the top-left vertical map, it follows from the fact that
\[ \R^n - D \to S^n - D \to S^n \]
is a homotopy cofibre sequence (of unpointed spaces) when $D$ is bounded. The bottom-left vertical map is an equivalence by our second assumption, since then the inclusion $S^n - U \subseteq S^n$ is a cofibration and $S^n/(S^n - U) \isom U^+$. Finally, the middle-left vertical map is a stable equivalence because the inclusion $S^n - U \to S^n - D$ is $(n-1)$-dual to the inclusion $D \to U$ by Alexander duality.
\end{proof}

We apply Theorem~\ref{thm:dold/puppe} in a fibrewise manner to the inclusion $D_V(I) \subseteq U_V(I)$ between subspaces of the vector bundle $R_V(I) \to \Delta(I)$. The role of the open subset $B$ is played by the space $\mathring{S}_V(I)$ of Definition~\ref{def:S}.

\begin{definition} \label{def:psi}
We write
\[ D_V(I)_+ \smsh_{\Delta(I)} U_V^+(I) \]
for the subspace of the smash product $D_V(I)_+ \smsh U_V^+(I)$ consisting of those pairs $((z,t),(x,u))$ for which $t = u$, in addition to the basepoint. We then define a map
\[ \psi: D_V(I)_+ \smsh_{\Delta(I)} U_V^+(I) \to \bar{S}_V(I) \]
by
\[ \psi((z,t),(x,t)) := (z-x,t). \]
The map $\psi_T$ is well-defined because the (fibrewise) vector addition in $R_V(I)$ restricts to a map
\[ D_V(I) \times \mathring{S}_V(I) \to U_V(I). \]
To see this claim, suppose $(z,t) \in D_V(I)$ and $(w,t) \in \mathring{S}_V(I)$. Then
\[ |(z+w)_i - (z+w)_j| \geq |z_i - z_j| - |w_i - w_j| > (t_i + t_j) - \min\{t_i,t_j\} > 0. \]
Note also that the inclusion $D_V(I) \subset U_V(I)$ is a (fibrewise) weak equivalence by Theorem~\ref{thm:D}. Thus the conditions of Theorem~\ref{thm:dold/puppe} are satisfied. It follows that $\psi_I$ is a fibrewise $S$-duality map. We are now in position to complete the proof of Theorem~\ref{thm:qcoop} and hence of the main result of this paper.
\end{definition}

\begin{proof}[Proof of Theorem~\ref{thm:qcoop}]
Combining the maps $\phi$ of Lemma~\ref{lem:phi} and $\psi$ of Definition~\ref{def:psi}, we obtain a map
\[ \rho: \mathbb{B}F_V(I) \smsh D_V(I)_+ \to \bar{S}_V(I) \]
given by
\[ ((r,y), (z,t)) \mapsto (z - ry(I),t). \]
This in turn induces a map of spectra
\[ \rho_{\#}: \Sigma^\infty \mathbb{B}F_V(I) \to \Map(D_V(I),\Sigma^\infty \bar{S}_V(I)) \]
which we claim is precisely the map $\alpha_{\#}$ of (\ref{eq:alpha'}) applied to the corolla $\tau_I$. This claim follows easily from the simple description of the map $\alpha_T$ of Definition~\ref{def:alpha} in the case that $T$ is a corolla, so that for any $i \in I$ the relevant sum consists of a single term corresponding to the root edge $I$, for which $t_I = 1$.

To see that $\rho_{\#}$, and hence $\alpha_{\#}$, is an equivalence of spectra, fix some $t \in \Delta(I)$ and consider the following diagram (where the subscripts `$t$' denote the relevant fibre over $t \in \Delta(I)$):
\[ \begin{diagram}
  \node{\Sigma^\infty \mathbb{B}F_V(I)} \arrow{e,tb}{\phi(t,-)}{\sim} \arrow[2]{s,l}{\rho_{\#}} \node{\Sigma^\infty U_V^+(T)_t} \arrow{s,lr}{\sim}{\psi_t} \\
  \node[2]{\Map(D_V(T)_t,\Sigma^\infty \bar{S}_V(I)_t)} \arrow{s,r}{\sim} \\
  \node{\Map(D_V(T),\Sigma^\infty \bar{S}_V(I))} \arrow{e,t}{\sim} \node{\Map(D_V(I)_t,\Sigma^\infty \bar{S}_V(I))}
\end{diagram} \]
where the bottom horizontal map is restriction along the inclusion $D_V(T)_t \to D_V(T)$, and the bottom-right vertical map is induced by the inclusion $\bar{S}_V(I)_t \to \bar{S}_V(I)$.

The map $\phi(t,-)$ is a homeomorphism by Lemma~\ref{lem:phi} and $\psi_t$ is an equivalence of spectra by Theorem~\ref{thm:dold/puppe}. It remains to show that each of the two inclusions mentioned above is a weak equivalence.

For $D_V(T)_t \to D_V(T)$ this follows from Proposition~\ref{prop:D}. For $\bar{S}_V(I)_t \to \bar{S}_V(I)$, we note the following commutative diagram of pointed spaces
\[ \begin{diagram}
  \node{S_V(I)_t} \arrow{s,l}{\sim} \arrow{e,t}{\sim} \node{S_V(I)} \arrow{s,r}{\sim} \\
  \node{\bar{S}_V(I)_t} \arrow{e} \node{\bar{S}_V(I)}
\end{diagram} \]
where the top horizontal map is an equivalence because $S_V(I)$ is the Thom space of the trivial vector bundle $R_V(I)$ over $\Delta(I)$, so that map is an inclusion of the form
\[ S^n \to S^n \smsh \Delta(I)_+. \qedhere \]
\end{proof}

\section{Compatibility with embeddings of vector spaces} \label{sec:compatibility}

We now turn to the compatibility of the equivalences of our main results with those maps induced by a linear embedding of one vector space in another. To be more precise, we look at the embedding of a normed vector space $V$ into a direct sum $V \oplus W$ where the norm on the direct sum is given as follows.

\begin{definition}
Given normed vector spaces $V,W$, we use the norm on $V \oplus W$ given by
\[ |(v,w)| := \max\{|v|,|w|\}. \]
\end{definition}

\begin{definition} \label{def:D-susp}
Let $V,W$ be finite-dimensional normed vector spaces, and let $I$ be a finite set. Then we define a map of pointed spaces
\[ \kappa_I: D_{V \oplus W}(I)_+ \to D_V(I)_+ \smsh_{\Delta(I)} \bar{S}_W(I); \quad ((x,y),t) \mapsto ((x,t), (y,t)). \]
To see this produces a well-defined continuous map into the fibrewise smash product, suppose that $((x,y),t) \in D_{V \oplus W}(I)$ and $(y,t) \in \mathring{S}_W(I)$. We want to show that $(x,t) \in D_V(I)$. For each $i \in I$, we have
\[ \max\{|x_i|,|y_i|\} = |(x_i,y_i)| \leq 1-t_i \]
and so $|x_i| \leq 1-t_i$. For each pair $i,j \in I$, we have
\[ \max\{|x_i-x_j|,|y_i-y_j|\} = |(x_i,y_i) - (x_j,y_j)| \geq t_i + t_j. \]
But $|y_i - y_j| < \min\{t_i,t_j\} < t_i+t_j$ and so we must have $|x_i - x_j| \geq t_i+t_j$. Therefore $(x,t) \in D_V(I)$ as desired.
\end{definition}

\begin{proposition} \label{prop:kappa}
 The maps of Definition~\ref{def:D-susp} form an $O(V) \times O(W)$-equivariant map of operads (and hence quasi-operads) of pointed spaces
\[ \kappa: {D_{V \oplus W}}_+ \to {D_V}_+ \smsh_{\Delta} \bar{S}_W. \]
\end{proposition}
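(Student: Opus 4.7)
The plan is to verify in turn that $\kappa$ respects operad composition, is $O(V) \times O(W)$-equivariant, and is natural in $I$. Naturality in $I$ is immediate from the coordinatewise definition of $\kappa_I$, and the equivariance follows from the block-diagonal inclusion $O(V) \times O(W) \hookrightarrow O(V \oplus W)$, under which $O(V)$ acts only on the $V$-coordinate and $O(W)$ only on the $W$-coordinate of $V \oplus W$. The target ${D_V}_+ \smsh_\Delta \bar{S}_W$ carries its operad structure as the fibred smash product over the simplex operad $\Delta$ of the two $\Delta$-operads $D_V$ and $\bar{S}_W$, with composition computed componentwise in each factor.

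For the main step---compatibility with $\circ_i$---I would start on non-basepoint representatives. Given $((x,y),t) \in D_{V \oplus W}(I)$ and $((x',y'),u) \in D_{V \oplus W}(J)$, Definition~\ref{def:P2} applied in $V \oplus W$ gives
\[ ((x,y),t) \circ_i ((x',y'),u) = ((x +_i tx', y +_i ty'), t \cdot_i u), \]
which $\kappa_{I \cup_i J}$ sends to $((x +_i tx', t \cdot_i u), (y +_i ty', t \cdot_i u))$. Going the other way, $\kappa_I \smsh \kappa_J$ followed by the componentwise composition in ${D_V}_+ \smsh_\Delta \bar{S}_W$ (using Definition~\ref{def:P2} in $V$ for the left factor and the $\bar{S}_W$-composition inherited from $R_W$ for the right factor) yields exactly the same pair, with the shared $\Delta$-coordinates $t, u$ combining to $t \cdot_i u$. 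So the square commutes on all non-basepoint configurations.

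The subtler step, which I expect to be the main obstacle, is matching the basepoint conditions contributed by the $\bar{S}_W$-factor. The composite through $D_{V \oplus W}$ lands at the basepoint exactly when $(y +_i ty', t \cdot_i u) \notin \mathring{S}_W(I \cup_i J)$, whereas the composite through ${D_V}_+ \smsh_\Delta \bar{S}_W$ lands at the basepoint exactly when at least one of $(y,t) \notin \mathring{S}_W(I)$, $(y',u) \notin \mathring{S}_W(J)$, or $(y +_i ty', t \cdot_i u) \notin \mathring{S}_W(I \cup_i J)$ holds. I would then invoke Proposition~\ref{prop:S-coop}: since $\mathring{S}_W$ is a sub-cooperad of $R_W$, the $R_W$-decomposition maps restrict to $\mathring{S}_W$, so membership of the composition $(y +_i ty', t \cdot_i u)$ in $\mathring{S}_W(I \cup_i J)$ forces both $(y,t) \in \mathring{S}_W(I)$ and $(y',u) \in \mathring{S}_W(J)$. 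Taking contrapositives shows that the two basepoint conditions coincide, completing the verification of the commutative square and hence the proof.
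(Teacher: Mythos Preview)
Your argument is correct and follows the same idea as the paper, which dispatches the proposition in one line: ``This claim follows from linearity of the operad structure maps for the overlapping discs operad.'' Your explicit computation of $\circ_i$ in the $V$- and $W$-coordinates is exactly that linearity unpacked, and your careful basepoint analysis via Proposition~\ref{prop:S-coop} is a valid verification of a point the paper leaves implicit (it is already absorbed into the statement that $\bar{S}_W$ is an operad, Proposition~\ref{prop:barS}).
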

\begin{proof}
This claim follows from linearity of the operad structure maps for the overlapping discs operad.
\end{proof}

\begin{remark} \label{rem:VW}
There is an equivalence of operads $S_W \weq \bar{S}_W$ and so the target of the map $\kappa$ is equivalent to the operadic suspension $\Sigma^W{D_V}_+$. We thus think of $\kappa$ as a model for a suitable map of operads (of pointed spaces)
\[ {E_{V \oplus W}}_+ \to  \Sigma^W{E_V}_+. \]
Maps of this type are originally due to Peter May~\cite{may:1972}, and have also been studied by Ahearn and Kuhn~\cite[\S7]{ahearn/kuhn:2002}.
\end{remark}

\begin{proposition} \label{prop:beta}
Let $V,W$ be finite-dimensional normed vector spaces. Then there is an isomorphism of operads of pointed spaces
\[ S_V \smsh_{\Delta} S_W \isom S_{V \oplus W} \]
given by $((x,t),(y,t)) \mapsto ((x,y),t)$ and which induces an equivalence of operads (and hence of quasi-cooperads):
\[ \sigma: \bar{S}_V \smsh_{\Delta} \bar{S}_W \weq \bar{S}_{V \oplus W}. \]
\end{proposition}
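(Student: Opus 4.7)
The plan is to reduce everything to the evident fibrewise identification
\[ R_V(I) \times_{\Delta(I)} R_W(I) \;\isom\; R_{V \oplus W}(I), \qquad ((x,t),(y,t)) \mapsto ((x,y),t), \]
which is an isomorphism of trivial vector bundles over $\Delta(I)$ because the barycentre condition $\sum_i t_i (x_i, y_i) = 0$ in $V \oplus W$ is precisely the conjunction of the barycentre conditions on the $V$- and $W$-components. Passing to the fibrewise one-point compactification over $\Delta(I)$ (with sections at infinity collapsed) converts the fibre product $\times_{\Delta(I)}$ into the fibrewise smash $\smsh_{\Delta(I)}$, so this upgrades to the asserted homeomorphism $S_V(I) \smsh_{\Delta(I)} S_W(I) \isom S_{V \oplus W}(I)$.

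To check that these homeomorphisms assemble into a map of operads, I would inspect the composition formulas of Definition~\ref{def:P2}: both the scaling $t \cdot_i u$ and the shift-and-scale $x +_i ty$ act coordinatewise in the vector-space argument, so applying them simultaneously in $V$ and $W$ is the same as applying them in $V \oplus W$. The decomposition maps inherited from the cooperad structure on $R_{-}$ (and the $O(V) \times O(W) \subseteq O(V \oplus W)$-equivariance) are coordinatewise for the same reason, so no additional verification is needed.

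For the second claim, the crucial observation is that the strict inequalities in Definition~\ref{def:S} split perfectly across a max-norm direct sum: $|(x_i, y_i)| < t_i$ if and only if both $|x_i| < t_i$ and $|y_i| < t_i$, and likewise for $|(x_i,y_i)-(x_j,y_j)|$ versus $\min\{t_i, t_j\}$. Under the identification of the first paragraph, this gives the set-level equality
\[ \mathring{S}_{V \oplus W}(I) \;=\; \mathring{S}_V(I) \times_{\Delta(I)} \mathring{S}_W(I). \]
Consequently the subspace of $S_V(I) \smsh_{\Delta(I)} S_W(I)$ collapsed in forming $\bar{S}_V(I) \smsh_{\Delta(I)} \bar{S}_W(I)$ coincides with the subspace of $S_{V \oplus W}(I)$ collapsed in forming $\bar{S}_{V \oplus W}(I)$. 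Thus $\sigma$ is in fact a homeomorphism of pointed spaces; combined with the operadic compatibility above, this gives the claimed equivalence (indeed an isomorphism) of operads, which then lifts to quasi-cooperads via Example~\ref{ex:q-coop}.

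The point requiring most care, rather than a genuine obstacle, is the essential role of the max-norm on the direct sum: it is precisely this choice that makes the open subspaces $\mathring{S}_{-}(I)$ multiplicative in the vector-space argument and forces $\sigma$ to be a strict isomorphism rather than merely a homotopy equivalence. With a Euclidean (or other) norm on $V \oplus W$ one would instead have to construct a fibrewise deformation retraction between the two candidate open subspaces, which is avoided here.
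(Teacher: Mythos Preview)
Your argument is correct and follows essentially the same route as the paper: the isomorphism $S_V \smsh_\Delta S_W \isom S_{V \oplus W}$ comes from the evident vector-bundle isomorphism $R_V \times_\Delta R_W \isom R_{V \oplus W}$, and the passage to $\bar{S}$ is handled by comparing the open subspaces $\mathring{S}$ via the max-norm. The one substantive difference is that you observe the set-level \emph{equality} $\mathring{S}_{V \oplus W}(I) = \mathring{S}_V(I) \times_{\Delta(I)} \mathring{S}_W(I)$ and hence conclude that $\sigma$ is a homeomorphism, whereas the paper checks only the inclusion $\mathring{S}_V \times_\Delta \mathring{S}_W \subseteq \mathring{S}_{V \oplus W}$ and claims only an equivalence; your observation is correct and gives the slightly sharper statement. (Indeed, the inclusion actually needed for $\sigma$ to be well-defined in the stated direction is the \emph{other} one, $\mathring{S}_{V \oplus W} \subseteq \mathring{S}_V \times_\Delta \mathring{S}_W$, which you also cover.)
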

\begin{proof}
The isomorphism of vector bundles $R_V(I) \oplus R_W(I) \isom R_{V \oplus W}(I)$, over $\Delta(I)$, induces the desired homeomorphisms between Thom spaces. To see that these maps pass to the quotient, we have to check that if $((x,y),t) \notin \mathring{S}_{V \oplus W}(I)$, then either $(x,t) \notin \mathring{S}_V(I)$ or $(y,t) \notin \mathring{S}_W(I)$.

So suppose that $(x,t) \in \mathring{S}_V(I)$ and $(y,t) \in \mathring{S}_W(I)$. For each $i \in I$, we have
\[ |(x_i,y_i)| = \max\{|x_i|,|y_i|\} < t_i \]
and for $i,j \in I$, we have
\[ |(x_i,y_i)-(x_j,y_j)| = |(x_i-x_j,y_i-y_j)| = \max\{|x_i-x_j|,|y_i-y_j|\} < \min\{t_i,t_j\} \]
so $((x,y),t) \in \mathring{S}_{V \oplus W}(I)$ as required.
\end{proof}

\begin{theorem} \label{thm:pro}
Let $V$ and $W$ be finite-dimensional normed vector spaces. Then there is an $O(V) \times O(W)$-equivariant commutative diagram of quasi-cooperads:
\[ \begin{diagram}
  \node{\Sigma^\infty\mathbb{B}F_V} \arrow[2]{s} \arrow{e,tb}{\alpha_{\#}(V)}{\sim} \node{\Map({D_V}_+,\Sigma^\infty \bar{S}_V)} \arrow{s,r}{\sim} \\
  \node[2]{\Map({D_V}_+ \smsh_{\Delta} \bar{S}_W, \Sigma^\infty \bar{S}_V \smsh_{\Delta} \bar{S}_W)} \arrow{s,r}{\Map(\kappa,\sigma)} \\
  \node{\Sigma^\infty\mathbb{B}F_{V \oplus W}} \arrow{e,tb}{\alpha_{\#}(V \oplus W)}{\sim} \node{\Map({D_{V \oplus W}}_+,\Sigma^\infty \bar{S}_{V \oplus W})}
\end{diagram} \]
where
\begin{itemize}
  \item the left-hand vertical map is induced by the inclusion $F_V \to F_{V \oplus W}$;
  \item the top-right vertical map is the map of spectra induced by maps of pointed spaces
  \[ \Map(D_V(T)_+,S^n \smsh \bar{S}_V(T)) \to \Map(D_V(T)_+ \smsh_{\Delta} \bar{S}_W(T), S^n \smsh \bar{S}_V(T) \smsh \bar{S}_W(T)); \]
  because each space $\bar{S}_W(T)$ is homotopy equivalent to a sphere, this map of spectra is a stable equivalence;
  \item the bottom-right vertical map is induced by the map of operads $\kappa$ and the map of quasi-cooperads $\sigma$.
\end{itemize}
\end{theorem}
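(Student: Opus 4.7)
The plan is to verify commutativity by a direct pointwise calculation at the level of the underlying maps $\alpha_T$ of Definition~\ref{def:alpha}. First I would use the coend presentation $BD_V(I) \cong \bar{w}(T) \smsh_{T \in \mathsf{Tree}_I} D_V(T)_+$ from Definition~\ref{def:bar1}, combined with the mapping-spectrum/coend adjunction of Lemma~\ref{lem:conv}, to reduce the problem to showing that, for each $I$-labelled tree $T$, the two composites
\[ F_V(I)_+ \smsh \bar{w}(T) \smsh D_{V \oplus W}(T)_+ \to \bar{S}_{V \oplus W}(I) \]
produced by the clockwise and counterclockwise routes around the square agree.

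Next I would plug in a typical point $(y, r, ((z, z'), t))$ and compute both composites explicitly. The clockwise route feeds $(y, r, (z, t))$ through $\alpha_T(V)$ to produce a point in $\bar{S}_V(I)$, uses $\kappa$ to recover the companion point $(z', t) \in \bar{S}_W(I)$ from $((z, z'), t) \in D_{V \oplus W}(T)$, and finally applies $\sigma$. Using the formula from Definition~\ref{def:alpha}, this produces the configuration
\[ \left( \big(z_i - \textstyle\sum_{i \in e \in E(T)} t_e r_e\, y(e)_i,\; z'_i\big)\right)_{i \in I} \]
in $\bar{S}_{V \oplus W}(I)$. The counterclockwise route first applies the inclusion $F_V \to F_{V \oplus W}$, sending $y$ to $\tilde y$ with $\tilde y(J)_i := (y(J)_i, 0)$; the key observation is that the max-norm on $V \oplus W$ gives $|(v, 0)|_{V \oplus W} = |v|_V$, so this inclusion preserves the weighted barycentre and norm conditions of Remark~\ref{rem:FM} and hence maps chosen representatives to chosen representatives. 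Applying $\alpha_T(V \oplus W)$ to $(\tilde y, r, ((z, z'), t))$ and using that the $W$-components of $\tilde y(e)$ all vanish, I obtain the same configuration, so the two composites agree.

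The $O(V) \times O(W)$-equivariance follows by assembling the equivariances of the individual pieces: $\alpha_{\#}(V)$ and $\alpha_{\#}(V \oplus W)$ are equivariant for the respective orthogonal groups, and the max-norm on $V \oplus W$ makes $O(V) \times O(W)$ a subgroup of $O(V \oplus W)$; the maps $\kappa$ and $\sigma$ are $O(V) \times O(W)$-equivariant by Propositions~\ref{prop:kappa} and~\ref{prop:beta}; the inclusion $F_V \to F_{V \oplus W}$ is $O(V)$-equivariant by construction; and the top-right smash-and-map arrow is equivariant by naturality.

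I expect the main obstacle to be bookkeeping rather than mathematical content: correctly assembling the adjunction of Lemma~\ref{lem:conv}, the coend presentation of $BD_V$, and the various variance conventions so that pointwise agreement of the underlying maps really implies commutativity of the diagram of spectra. Once those reductions are in place, the actual verification is the trivial algebraic observation that vector subtraction in $V \oplus W$ splits componentwise and that the inclusion $F_V \hookrightarrow F_{V \oplus W}$ places each $y(e)$ into the $V$-summand of $V \oplus W$.
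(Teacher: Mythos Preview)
Your proposal is correct and is exactly the approach the paper takes: the paper's entire proof reads ``This claim is a simple diagram chase,'' and what you have written is precisely that chase unwound. The key observation that the inclusion $F_V \hookrightarrow F_{V \oplus W}$ preserves the weighted barycentre and norm normalizations (because $|(v,0)|_{V \oplus W} = |v|_V$ for the max-norm) is the only point requiring any thought, and you have identified it. One small check you should add for completeness: when $\kappa$ sends $((z,z'),t)$ to the basepoint because $(z',t) \notin \mathring{S}_W(I)$, you need the counterclockwise image $((x,z'),t)$ to also lie outside $\mathring{S}_{V \oplus W}(I)$; this follows immediately from the max-norm since any violated inequality for $z'$ in $W$ forces the same violated inequality for $(x,z')$ in $V \oplus W$.
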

\begin{proof}
This claim is a simple diagram chase.
\end{proof}

\begin{corollary}
Let $V$ and $W$ be finite-dimensional normed vector spaces. Then the Koszul dual of the inclusion of stable (reduced) little-disc operads
\[ \mathbf{E}_V \to \mathbf{E}_{V \oplus W} \]
can be identified, under the equivalences of Theorem~\ref{thm:main}, with the $V \oplus W$-desuspension of the operad map
\[ \mathbf{E}_{V \oplus W} \to \Sigma^{W}\mathbf{E}_V \]
described in Remark~\ref{rem:VW}.
\end{corollary}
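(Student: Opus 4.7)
The plan is to extract the corollary directly from the commutative diagram of Theorem~\ref{thm:pro}, by identifying each of its four objects with the relevant (de)suspension of a stable little-disc operad, and each of its two vertical maps with the corresponding induced map. The horizontal arrows $\alpha_{\#}(V)$ and $\alpha_{\#}(V \oplus W)$ are, by Theorem~\ref{thm:qcoop} combined with the zigzag of equivalences at the beginning of Section~\ref{sec:map}, precisely the concrete realizations of the equivalences $K\mathbf{E}_V \homeq \Sigma^{-V}\mathbf{E}_V$ and $K\mathbf{E}_{V \oplus W} \homeq \Sigma^{-(V \oplus W)}\mathbf{E}_{V \oplus W}$ of Theorem~\ref{thm:main}. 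So once the two vertical maps in Theorem~\ref{thm:pro} are identified with the Koszul dual of the inclusion and the desuspension of the May map respectively, the corollary follows by commutativity of the diagram.

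For the left-hand vertical map, first I would recall that the operad map $F_V \to F_{V \oplus W}$ induced by the standard inclusion is a cofibrant model for $E_V \to E_{V \oplus W}$, so $\Sigma^\infty \mathbb{B}F_V \to \Sigma^\infty \mathbb{B}F_{V \oplus W}$ models the suspension spectrum of the bar construction applied to the stable inclusion $\mathbf{E}_V \to \mathbf{E}_{V \oplus W}$. Via the Quillen equivalence $(\mathbb{B},\mathbb{C})$ of Theorem~\ref{thm:bar-cobar} and dualizing against the sphere spectrum as in Definition~\ref{def:kosuzl}, this map is (the Spanier-Whitehead predual of) the Koszul dual map $K\mathbf{E}_{V \oplus W} \to K\mathbf{E}_V$.

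For the right-hand vertical map, I would first note that $\sigma: \bar{S}_V \smsh_\Delta \bar{S}_W \weq \bar{S}_{V \oplus W}$ of Proposition~\ref{prop:beta} exhibits $\Sigma^V \Sigma^W \homeq \Sigma^{V \oplus W}$ at the operadic level, while Proposition~\ref{prop:kappa} together with Remark~\ref{rem:VW} identifies $\kappa : {D_{V \oplus W}}_+ \to {D_V}_+ \smsh_\Delta \bar{S}_W$ as a point-set model for the May map $\mathbf{E}_{V \oplus W} \to \Sigma^W \mathbf{E}_V$. Applying $\Map(-,\Sigma^\infty \bar{S}_{V \oplus W})$ to $\kappa$ and using $\sigma$ to rewrite the target, then precomposing with the top-right stable equivalence (which extends by $\bar{S}_W$ to bring in the missing $\Sigma^W$-factor needed to match $(V \oplus W)$-desuspension instead of $V$-desuspension), the composite right-hand map is exactly $\Sigma^{-(V \oplus W)}$ applied to $\kappa$, i.e., the $(V \oplus W)$-desuspension of the May map $\mathbf{E}_{V \oplus W} \to \Sigma^W \mathbf{E}_V$.

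The main obstacle is purely the bookkeeping of adjunctions and normalizations: one must verify that the equivalences $\alpha_\#$ of Theorem~\ref{thm:qcoop}, when transported through the Quillen equivalence and re-expressed as maps between $K\mathbf{E}_\bullet$ and $\Sigma^{-\bullet}\mathbf{E}_\bullet$, really agree with the zigzag in Section~\ref{sec:map}, and that the definition of the right-hand vertical map in Theorem~\ref{thm:pro} genuinely realizes $\Sigma^{-(V \oplus W)}$ of $\kappa$ rather than some equivalent but distinct representative. Both checks are direct once one unwinds the definitions of $\mathbb{B}$, of the operadic (de)suspension via $S_V$, and of the $O(V) \times O(W)$-equivariant structure, but no new geometric input beyond Theorem~\ref{thm:pro} is required; this is why the result appears as a corollary rather than a theorem.
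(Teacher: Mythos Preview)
Your proposal is correct and matches the paper's intended argument: the paper states this corollary without proof immediately after Theorem~\ref{thm:pro}, treating it as a direct consequence of that commutative diagram. Your unpacking of the identifications of the vertical maps with the Koszul dual of the inclusion and the desuspended May map, together with the observation that the horizontal equivalences $\alpha_{\#}$ realize the equivalences of Theorem~\ref{thm:main}, is exactly the bookkeeping the reader is expected to supply.
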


\begin{remark}
We now consider the normed vector spaces $\R^n$ with the $\ell^\infty$-norm, and when $V = \R^n$ we abbreviate $D_V$, $B_V$, $S_V$, etc... as $D_n$, $B_n$, $S_n$, etc... Thus $E_n$ denotes the ordinary little $n$-\emph{cubes} operad, and $\mathbf{E}_n$ the corresponding operad of spectra.

The sequence of inclusions of the form $x \mapsto (x,0)$
\[ \R^1 \to \R^2 \to \R^3 \to \dots \]
then determines a sequence of operads of spectra
\[ \mathbf{E}_1 \to \mathbf{E}_2 \to \mathbf{E}_3 \to \dots \]
which, on applying Koszul duals, gives us an inverse sequence of operads
\[ K\mathbf{E}_1 \leftarrow K\mathbf{E}_2 \leftarrow K\mathbf{E}_3 \leftarrow \dots \]
We would now like to identify this sequence with something of the form
\[ \Sigma^{-\R^1}\mathbf{E}_1 \leftarrow \Sigma^{-\R^2}\mathbf{E}_2 \leftarrow \Sigma^{-\R^3}\mathbf{E}_3 \leftarrow \dots \]
but it is a little tricky to make the maps in this sequence precise. Here is one approach.
\end{remark}

\begin{definition} \label{def:kappa*}
In the formulas below, we use $\smsh$ to denote the fibrewise smash product $\smsh_{\Delta}$ of Definition~\ref{def:psi}. We also write $\mathbf{D}_n := \Sigma^\infty_+D_n$ for the stable (restricted, reduced) little $n$-cubes operad.

For each $n \in \mathbb{N}$, we have operads of spectra
\[ \tilde{\Sigma}^{-\R^n}\mathbf{D}_n := \hocolim_r \Sigma^{-\R^{n+r}}(\mathbf{D}_n \smsh (\bar{S}_1)^{\smsh r}) \]
where the maps in this homotopy colimit are all stable equivalences of operads and take the form
\[ \begin{split} \Map(S_{n+r}, \mathbf{D}_n \smsh (\bar{S}_1)^{\smsh r})
    &\weq \Map(S_{n+r} \smsh S_1, \mathbf{D}_n \smsh (\bar{S}_1)^{\smsh r} \smsh S_1) \\
    &\weq \Map(S_{n+r+1}, \mathbf{D}_n \smsh (\bar{S}_1)^{\smsh r+1}) \end{split} \]
with the first map of a similar nature to the top-right vertical map in the diagram in Theorem~\ref{thm:pro}, and the second induced by the isomorphism of cooperads $S_{n+r+1} \isom S_{n+r} \smsh S_1$, the equivalence of operads $S_1 \weq \bar{S}_1$, and Proposition~\ref{prop:beta}. The canonical map to the homotopy colimit provides an equivalence of operads
\[ \Sigma^{-\R^n}\mathbf{D}_n \weq \tilde{\Sigma}^{-\R^n}\mathbf{D}_n. \]

We then also have maps of operads
\[ \kappa_n^*: \tilde{\Sigma}^{-\R^{n+1}}\mathbf{D}_{n+1} \to \tilde{\Sigma}^{-\R^n}\mathbf{D}_n \]
given on the $r$th term in the homotopy colimit by
\[ \Sigma^{-\R^{n+1+r}}(\mathbf{D}_{n+1} \smsh (\bar{S}_1)^{\smsh r}) \to \Sigma^{-\R^{n+1+r}}(\mathbf{D}_n \smsh (\bar{S}_1)^{\smsh 1+r}) \]
induced by the map $\kappa: \mathbf{D}_{n+1} \to \mathbf{D}_n \smsh \bar{S}_1$ of Proposition~\ref{prop:kappa}.
\end{definition}

\begin{theorem} \label{thm:seq}
The inverse sequence of operads
\[ K\mathbf{E}_1 \leftarrow K\mathbf{E}_2 \leftarrow K\mathbf{E}_3 \leftarrow \dots \]
is equivalent to the sequence
\[ \tilde{\Sigma}^{-\R^1}\mathbf{D}_1 \leftarrow \tilde{\Sigma}^{-\R^2}\mathbf{D}_2 \leftarrow \tilde{\Sigma}^{-\R^3}\mathbf{D}_3 \leftarrow \dots \]
consisting of the operad maps $\kappa_n^*$ of Definition~\ref{def:kappa*}.
\end{theorem}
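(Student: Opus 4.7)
The plan is to upgrade the corollary preceding the statement into an equivalence of inverse sequences. Applying that corollary with $V = \R^n$ and $W = \R^1$: for each $n \geq 1$, the Koszul dual of $\mathbf{E}_n \to \mathbf{E}_{n+1}$ is identified, through the equivalences of Theorem~\ref{thm:main}, with the $\R^{n+1}$-desuspension of the map $\mathbf{E}_{n+1} \to \Sigma^{\R^1}\mathbf{E}_n$ from Remark~\ref{rem:VW}. Using Theorem~\ref{thm:D} to pass between $\mathbf{E}$ and $\mathbf{D}$, and Proposition~\ref{prop:beta} to identify $\Sigma^{\R^1}\mathbf{D}_n$ with $\mathbf{D}_n \smsh_\Delta \bar{S}_1$, this becomes the map $\Sigma^{-\R^{n+1}}(\mathbf{D}_{n+1}) \to \Sigma^{-\R^{n+1}}(\mathbf{D}_n \smsh \bar{S}_1)$ induced by the $\kappa$ of Proposition~\ref{prop:kappa}.

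Next, this last map is exactly the component of $\kappa_n^*$ going from the $r = 0$ term of the source to the $r = 1$ term of the target in the homotopy colimit of Definition~\ref{def:kappa*}. Composing with the canonical equivalences $\Sigma^{-\R^n}\mathbf{D}_n \weq \tilde{\Sigma}^{-\R^n}\mathbf{D}_n$, I would obtain, for each $n$, a homotopy-commutative square whose horizontal arrows are the equivalences $K\mathbf{E}_n \simeq \tilde{\Sigma}^{-\R^n}\mathbf{D}_n$ assembled from Section~\ref{sec:proof}, whose left vertical arrow is the Koszul dual of the inclusion, and whose right vertical arrow is $\kappa_n^*$.

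The main obstacle will be coherence: each equivalence $K\mathbf{E}_n \simeq \tilde{\Sigma}^{-\R^n}\mathbf{D}_n$ is only specified up to a zigzag of choices, and one must ensure these choices can be made simultaneously across all $n$ so as to produce a genuine equivalence of diagrams $\N^{op} \to \mathsf{Op}(\spectra)$. The introduction of the homotopy colimit in Definition~\ref{def:kappa*} is designed precisely for this purpose: it absorbs the different shift identifications $\Sigma^{-\R^{n+r}}\Sigma^{\R^r}\mathbf{D}_n \simeq \Sigma^{-\R^n}\mathbf{D}_n$ into a canonical homotopy-invariant object in which the $\kappa_n^*$ compose strictly. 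To finish the argument I would iterate Theorem~\ref{thm:pro} with $V = \R^n$ and $W = \R^r$ for all $r \geq 0$, obtaining compatible squares at every stage of the homotopy colimit. The associativity of the $\sigma$ maps of Proposition~\ref{prop:beta}, together with naturality of all the constructions in Theorem~\ref{thm:pro} (which holds on the nose at the space level, before passing to spectra), then supplies the higher coherence needed to assemble these into an equivalence of pro-systems in the homotopy category of operads of spectra.
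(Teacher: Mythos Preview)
The paper gives no explicit proof of this theorem; it is stated as a consequence of Theorem~\ref{thm:pro}, its Corollary, and the constructions of Definition~\ref{def:kappa*}. Your proposal fills in exactly the details the paper omits, and does so along the lines the paper intends: apply Theorem~\ref{thm:pro} (or its Corollary) with $V = \R^n$, $W = \R$ to identify each individual square, then use the homotopy colimit of Definition~\ref{def:kappa*} to absorb the shift identifications and make the squares cohere into a map of inverse systems. Your observation that the $r=0 \to r=1$ component of $\kappa_n^*$ is precisely the $\kappa$-induced map, and that the stabilization maps in the colimit are all equivalences, is the key point. There is nothing to contrast with here; your approach is the expected one.
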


The homotopy limit of the inverse sequence $(K\mathbf{E}_n)_{n \in \mathbb{N}}$ is equivalent to $K\mathbf{E}_\infty \homeq K\mathbf{Com}$, i.e.\ the spectral Lie-operad. Theorem~\ref{thm:seq} thus provides us with a new model for the spectral Lie-operad, hence also the Goodwillie derivatives of the identity, that does not involve the bar construction.

\begin{corollary} \label{cor:lie}
There is an equivalence of operads of spectra
\[ \mathbf{Lie} \homeq \holim_n \tilde{\Sigma}^{-\R^n}\mathbf{D}_n. \]
\end{corollary}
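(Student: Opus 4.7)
The plan is to deduce the corollary immediately from Theorem~\ref{thm:seq} by passing to the homotopy limit of the two equivalent inverse sequences. Applying $\holim_n$ to the equivalence $K\mathbf{E}_n \homeq \tilde{\Sigma}^{-\R^n}\mathbf{D}_n$ produces
\[ \holim_n K\mathbf{E}_n \homeq \holim_n \tilde{\Sigma}^{-\R^n}\mathbf{D}_n, \]
so it suffices to identify the left-hand side with the spectral Lie operad $\mathbf{Lie}$.

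For this identification I would combine two well-known inputs with the contravariance of Koszul duality. First, the standard inclusions of little-cubes operads fit into a sequential diagram whose homotopy colimit is the little $\infty$-cubes operad, and in spectra $\hocolim_n \mathbf{E}_n \homeq \mathbf{E}_\infty \homeq \mathbf{Com}$. Second, as recorded in the example following Definition~\ref{def:kosuzl} and proved in~\cite{ching:2005}, the Koszul dual of the commutative operad is the spectral Lie operad: $K\mathbf{Com} \homeq \mathbf{Lie}$. Granting that $K$ converts the relevant sequential homotopy colimit into a homotopy limit, we then obtain
\[ \holim_n K\mathbf{E}_n \homeq K\bigl(\hocolim_n \mathbf{E}_n\bigr) \homeq K\mathbf{Com} \homeq \mathbf{Lie}, \]
which is what is required.

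The main obstacle is justifying the step $\holim_n K\mathbf{E}_n \homeq K(\hocolim_n \mathbf{E}_n)$. This ought to follow directly from the definition $K\mathbf{P}(I) = \Map(B\mathbf{P}'(I), \mathbf{S})$: the bar construction, being a coend against the pointed spaces $\bar{w}(T)$ as in Definition~\ref{def:bar1}, commutes with sequential homotopy colimits, while $\Map(-, \mathbf{S})$ takes homotopy colimits of termwise-finite spectra to homotopy limits. The required finiteness is available because the Fulton--MacPherson model $F_V$ is cofibrant and each $B F_V(I)$ has the homotopy type of a finite CW-spectrum (its dimension bounded in terms of $\dim V$ and $|I|$); to make the argument go through cleanly one would cofibrantly replace the sequence $(\mathbf{E}_n)$ by the sequence $(F_n)$ of Fulton--MacPherson models, verify that the inclusions $F_n \to F_{n+1}$ are cofibrations of operads so that the strict colimit computes the homotopy colimit, and then check that the operad structure maps on the inverse limit agree with those of $K\mathbf{Com}$ under the identifications above. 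Once this bookkeeping is complete the corollary falls out.
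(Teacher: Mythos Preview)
Your proposal is correct and follows essentially the same approach as the paper: apply $\holim_n$ to the equivalence of inverse sequences from Theorem~\ref{thm:seq}, and identify $\holim_n K\mathbf{E}_n$ with $K\mathbf{E}_\infty \homeq K\mathbf{Com} \homeq \mathbf{Lie}$. The paper simply asserts this last identification in the sentence preceding the corollary without further argument, whereas you go on to sketch why $K$ takes the sequential homotopy colimit $\hocolim_n \mathbf{E}_n$ to the homotopy limit; that extra care is reasonable but not something the paper itself supplies.
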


% bibliography

\providecommand{\bysame}{\leavevmode\hbox to3em{\hrulefill}\thinspace}
\providecommand{\MR}{\relax\ifhmode\unskip\space\fi MR }
\providecommand{\MRhref}[2]{%
	\href{http://www.ams.org/mathscinet-getitem?mr=#1}{#2}
}
\providecommand{\href}[2]{#2}


\begin{thebibliography}{10}
	
	\bibitem{adams/hilton:1956}
	J.~F. Adams and P.~J. Hilton, \emph{On the chain algebra of a loop space},
	Comment. Math. Helv. \textbf{30} (1956), 305--330. \MR{77929}
	
	\bibitem{ahearn/kuhn:2002}
	Stephen Ahearn and Nicholas Kuhn, \emph{Product and other fine structure in
		polynomial resolutions of mapping spaces}, Algebraic and Geometric Topology
	\textbf{2} (2002), 591--647.
	
	\bibitem{amabel:2019}
	Araminta Amabel, \emph{{P}oincar\'{e}/{K}oszul duality for general operads}, to
	appear in Homology, Homotopy and Applications, arxiv:1910.09076.
	
	\bibitem{arone/ching:2016}
	Gregory Arone and Michael Ching, \emph{Cross-effects and the classification of
		{T}aylor towers}, Geom. Topol. \textbf{20} (2016), no.~3, 1445--1537.
	\MR{3523062}
	
	\bibitem{arone/ching:2017}
	\bysame, \emph{Manifolds, {K}-theory and the calculus of functors}, Manifolds
	and {$K$}-theory, Contemp. Math., vol. 682, Amer. Math. Soc., Providence, RI,
	2017, pp.~1--37. \MR{3603891}
	
	\bibitem{arone/kankaanrinta:2014}
	Gregory Arone and Marja Kankaanrinta, \emph{The sphere operad}, Bulletin of the
	London Mathematical Society \textbf{46} (2014), no.~1, 126--132.
	
	\bibitem{axelrod/singer:1994}
	Scott Axelrod and I.~M. Singer, \emph{Chern-{S}imons perturbation theory.
		{II}}, J. Differential Geom. \textbf{39} (1994), no.~1, 173--213.
	\MR{1258919}
	
	\bibitem{ayala/francis:2019}
	David Ayala and John Francis, \emph{Poincar\'{e}/{K}oszul duality}, Comm. Math.
	Phys. \textbf{365} (2019), no.~3, 847--933. \MR{3916983}
	
	\bibitem{ayala/francis:2021}
	\bysame, \emph{Zero-pointed manifolds}, J. Inst. Math. Jussieu \textbf{20}
	(2021), no.~3, 785--858. \MR{4260642}
	
	\bibitem{barber:2017}
	Dean Barber, \emph{A comparison of models for the {F}ulton-{M}ac{P}herson
		operads}, Ph.D. thesis, University of Sheffield, 2017.
	
	\bibitem{baryshnikov/bubenik/kahle:2014}
	Yuliy Baryshnikov, Peter Bubenik, and Matthew Kahle, \emph{Min-type {M}orse
		theory for configuration spaces of hard spheres}, Int. Math. Res. Not. IMRN
	(2014), no.~9, 2577--2592. \MR{3207377}
	
	\bibitem{boardman/vogt:1973}
	J.~M. Boardman and R.~M. Vogt, \emph{Homotopy invariant algebraic structures on
		topological spaces}, Springer-Verlag, Berlin, 1973, Lecture Notes in
	Mathematics, Vol. 347. \MR{54 \#8623a}
	
	\bibitem{ching:2005}
	Michael Ching, \emph{Bar constructions for topological operads and the
		{G}oodwillie derivatives of the identity}, Geom. Topol. \textbf{9} (2005),
	833--933 (electronic). \MR{2140994}
	
	\bibitem{ching:2012}
	\bysame, \emph{Bar-cobar duality for operads in stable homotopy theory}, J.
	Topol. \textbf{5} (2012), no.~1, 39--80. \MR{2897049}
	
	\bibitem{cisinski/moerdijk:2013}
	Denis-Charles Cisinski and Ieke Moerdijk, \emph{Dendroidal {S}egal spaces and
		{$\infty$}-operads}, J. Topol. \textbf{6} (2013), no.~3, 675--704.
	\MR{3100887}
	
	\bibitem{dold/puppe:1980}
	Albrecht Dold and Dieter Puppe, \emph{Duality, trace, and transfer},
	Proceedings of the {I}nternational {C}onference on {G}eometric {T}opology
	({W}arsaw, 1978), PWN, Warsaw, 1980, pp.~81--102. \MR{656721}
	
	\bibitem{drummondcole:2014}
	Gabriel~C. Drummond-Cole, \emph{Homotopically trivializing the circle in the
		framed little disks}, J. Topol. \textbf{7} (2014), no.~3, 641--676.
	\MR{3252959}
	
	\bibitem{fresse:2004}
	Benoit Fresse, \emph{Koszul duality of operads and homology of partition
		posets}, Homotopy theory and its applications (Evanston, 2002), Contemp.
	Math., vol. 346, Amer. Math. Soc., Providence, RI, 2004, pp.~115--215.
	
	\bibitem{fresse:2010}
	\bysame, \emph{The bar complex of an {$E$}-infinity algebra}, Adv. Math.
	\textbf{223} (2010), no.~6, 2049--2096. \MR{2601008}
	
	\bibitem{fresse:2011}
	\bysame, \emph{Koszul duality of {$E_n$}-operads}, Selecta Math. (N.S.)
	\textbf{17} (2011), no.~2, 363--434. \MR{2803847 (2012d:55012)}
	
	\bibitem{fulton/macpherson:1994}
	William Fulton and Robert MacPherson, \emph{A compactification of configuration
		spaces}, Ann. of Math. (2) \textbf{139} (1994), no.~1, 183--225. \MR{1259368}
	
	\bibitem{getzler:1995}
	Ezra Getzler, \emph{Operads and moduli spaces of genus {$0$} {R}iemann
		surfaces}, The moduli space of curves (Texel Island, 1994), Progr. Math.,
	vol. 129, Birkh\"auser Boston, Boston, MA, 1995, pp.~199--230. \MR{MR1363058
		(96k:18008)}
	
	\bibitem{getzler/jones:1994}
	Ezra Getzler and J.~D.~S. Jones, \emph{Operads, homotopy algebra and iterated
		integrals for double loop spaces}, 1994.
	
	\bibitem{ginzburg/kapranov:1994}
	Victor Ginzburg and Mikhail Kapranov, \emph{Koszul duality for operads}, Duke
	Math. J. \textbf{76} (1994), no.~1, 203--272. \MR{MR1301191 (96a:18004)}
	
	\bibitem{hu/kriz/somberg:2019}
	Po~Hu, Igor Kriz, and Petr Somberg, \emph{Derived representation theory of
		{L}ie algebras and stable homotopy categorification of {$sl_k$}}, Adv. Math.
	\textbf{341} (2019), 367--439. \MR{3872851}
	
	\bibitem{lurie:2017}
	Jacob Lurie, \emph{{H}igher {A}lgebra}, available at
	\url{www.math.ias.edu/~lurie/}, version dated September 18, 2017.
	
	\bibitem{may:1972}
	J.~P. May, \emph{The geometry of iterated loop spaces}, Springer-Verlag,
	Berlin, 1972, Lectures Notes in Mathematics, Vol. 271. \MR{0420610 (54
		\#8623b)}
	
	\bibitem{moore:1971}
	John~C. Moore, \emph{Differential homological algebra}, Actes du {C}ongr\`es
	{I}nternational des {M}ath\'ematiciens ({N}ice, 1970), {T}ome 1,
	Gauthier-Villars, Paris, 1971, pp.~335--339. \MR{0436178 (55 \#9128)}
	
	\bibitem{priddy:1970}
	Stewart~B. Priddy, \emph{Koszul resolutions}, Trans. Amer. Math. Soc.
	\textbf{152} (1970), 39--60. \MR{MR0265437 (42 \#346)}
	
	\bibitem{quillen:1969}
	Daniel Quillen, \emph{Rational homotopy theory}, Ann. of Math. (2) \textbf{90}
	(1969), 205--295. \MR{0258031 (41 \#2678)}
	
	\bibitem{salvatore:1999}
	Paolo Salvatore, \emph{Configuration operads, minimal models and rational
		curves}, Ph.D. thesis, Oxford University, 1999.
	
	\bibitem{salvatore:2001}
	\bysame, \emph{Configuration spaces with summable labels}, Cohomological
	methods in homotopy theory ({B}ellaterra, 1998), Progr. Math., vol. 196,
	Birkh\"{a}user, Basel, 2001, pp.~375--395. \MR{1851264}
	
	\bibitem{salvatore:2019a}
	\bysame, \emph{{A cell decomposition of the {F}ulton-{M}ac{P}herson operad}}, arXiv:1906.07694.
	
	\bibitem{salvatore:2021}
	\bysame, \emph{The {F}ulton-{M}ac{P}herson operad and the {$W$}-construction},
	Homology Homotopy Appl. \textbf{23} (2021), no.~2, 1--8. \MR{4236687}
	
	\bibitem{sinha:2004}
	Dev~P. Sinha, \emph{Manifold-theoretic compactifications of configuration
		spaces}, Selecta Math. (N.S.) \textbf{10} (2004), no.~3, 391--428.
	\MR{2099074 (2005h:55015)}
	
	\bibitem{ward:2019}
	Benjamin~C. Ward, \emph{Intertwining for semidirect product operads}, Algebr.
	Geom. Topol. \textbf{19} (2019), no.~4, 1903--1934. \MR{3995021}
	
	\bibitem{westerland:2008}
	Craig Westerland, \emph{Equivariant operads, string topology, and {T}ate
		cohomology}, Math. Ann. \textbf{340} (2008), no.~1, 97--142. \MR{2349769}
	
\end{thebibliography}
\end{document}